\documentclass{article}

\PassOptionsToPackage{numbers}{natbib}
\usepackage{amsmath}
\usepackage[preprint]{neurips_2026}
\usepackage[utf8]{inputenc} 
\usepackage[T1]{fontenc}    
\usepackage{hyperref}       
\usepackage{cleveref}
\usepackage{url}            
\usepackage{booktabs}       

\usepackage{makecell}
\usepackage{amsfonts}       
\usepackage{nicefrac}       
\usepackage{microtype}      
\usepackage{xcolor}         
\usepackage[pdftex]{graphicx}

\title{Time-Inhomogeneous Preconditioned Langevin Dynamics}

\author{%
  Alexander~Falk\thanks{corresponding author} \\
  Institute of Visual Computing\\
  Graz University of Technology\\
  Inffeldgasse 16, 8010 Graz\\
  \texttt{falk@tugraz.at} \\
  \And
  Laurenz~Nagler\\
  Institute of Visual Computing\\
  Graz University of Technology\\
  Inffeldgasse 16, 8010 Graz\\
  \texttt{lnagler@tugraz.at} \\
    \And
  Andreas~Habring\\
  Institute of Visual Computing\\
  Graz University of Technology\\
  Inffeldgasse 16, 8010 Graz\\
  \texttt{andreas.habring@tugraz.at} \\
  \And
  Thomas~Pock\\
  Institute of Visual Computing\\
  Graz University of Technology\\
  Inffeldgasse 16, 8010 Graz\\
  \texttt{thomas.pock@tugraz.at} \\
}

\usepackage{amsopn}
\DeclareMathOperator{\diag}{diag}

\usepackage{siunitx}
\usepackage{printlen}
\usepackage{algorithm}
\usepackage[noEnd,indLines,commentColor=gray]{algpseudocodex}

\usepackage{amsmath,amsthm,amsfonts}
\usepackage{amssymb,stmaryrd}
\usepackage{cleveref}
\usepackage{mathtools}
\usepackage{bbm}

\newtheorem{lemma}{Lemma}
\newtheorem{definition}{Definition}
\newtheorem{theorem}{Theorem}
\newtheorem{proposition}{Proposition}
\newtheorem{corollary}{Corollary}
\newtheorem{ass}{Assumption}
\newtheorem{rmk}{Remark}

\crefname{ass}{assumption}{assumptions}
\crefname{rmk}{remark}{remarks}

\DeclareMathOperator{\id}{I}
\newcommand{\expo}[1]{\exp\left(#1\right)}

\DeclareMathOperator{\divergence}{div}

\DeclareMathOperator{\KL}{KL}

\DeclareMathOperator{\trace}{tr}

\DeclareMathOperator{\cov}{cov}

\DeclareMathOperator{\law}{law}

\newcommand{\doublehookrightarrow}%
{\DOTSB\lhook\joinrel\relbar\!\!\!\!\lhook\joinrel\rightarrow}

\newcommand{\longrightharpoonup}%
{\relbar\joinrel\rightharpoonup}

\newcommand{\conditionalcomma}[1]{\ifx#1\empty\else,\fi}

\newcommand{\Pc}{\mathcal{P}}

\newcommand{\Fc}{\mathcal{F}}

\newcommand{\1}{\mathbbm{1}}
\newcommand{\E}{\mathbb{E}}
\renewcommand{\P}{\mathbb{P}}
\newcommand{\R}{\mathbb{R}}
\renewcommand{\E}{\mathbb{E}}
\newcommand{\N}{\mathbb{N}}

\newcommand{\grad}{\nabla}

\newcommand{\dd}{\mathrm{d}}

\renewcommand{\epsilon}{\varepsilon}

\newlength{\formulaindentwidth}

\newcommand{\Nc}{\mathcal{N}}

\newcommand{\Lc}{\mathcal{L}}

\newcommand{\step}{{h}}
\newcommand{\bstep}{{\mathbf{h}}}

\newcommand{\lip}{L}
\newcommand{\tamingconst}{M}
\newcommand{\lipdrift}{L_b}
\newcommand{\tamebound}{N}
\renewcommand{\t}{\top}
\newcommand{\lipB}{L}

\newcommand{\minB}{\beta_{\text{l}}}
\newcommand{\maxB}{\beta_{\text{u}}}
\newcommand{\ito}{It\^o}

\newcommand{\loc}{{\mathrm{loc}}}

\newcommand{\Cplsi}{C_\mathrm{PLSI}} 
\newcommand{\Clsi}{C_\mathrm{LSI}} 
\newcommand{\Cplsimin}{C^\mathrm{min}_\mathrm{PLSI}} 
\newcommand{\drift}{b}
\newcommand{\gauss}{\gamma}

\newcommand{\contlyapa}{\lambda}
\newcommand{\contlyapb}{\rho}

\newcommand{\nB}{{n_B}}
\newcommand{\holderB}{\delta}

\newcommand{\Mp}{M_p}  

\Crefname{chapter}{Chapter}{Chapters}
\crefname{chapter}{Cha.}{Chas.}
\Crefname{section}{Section}{Sections}
\crefname{section}{Sec.}{Secs.}
\Crefname{table}{Table}{Tables}
\crefname{table}{Tab.}{Tabs.}
\Crefname{definition}{Definition}{Definitions}
\crefname{definition}{Def.}{Defs.}
\Crefname{remark}{Remark}{Remarks}
\crefname{remark}{Rem.}{Rems.}
\Crefname{example}{Example}{Examples}
\crefname{example}{Ex.}{Exs.}
\Crefname{figure}{Figure}{Figures}
\crefname{figure}{Fig.}{Figs.}
\Crefname{equation}{Equation}{Equations}
\crefname{equation}{Eq.}{Eqs.}
\Crefname{algorithm}{Algorithm}{Algorithms}
\crefname{algorithm}{Alg.}{Algs.}

\usepackage{glossaries-extra}
\newabbreviation{ula}{ULA}{unadjusted Langevin algorithm}
\newabbreviation{ald}{ALD}{annealed Langevin dynamics}
\newabbreviation{myula}{MYULA}{Moreau-Yosida regularized unadjusted Langevin algorithm}
\newabbreviation{daz}{DAZ}{diffusion at absolute zero}
\newabbreviation{apgd}{APGD}{accelerated proximal gradient descent}
\newabbreviation{dsm}{DSM}{denoising score matching}
\newabbreviation{sde}{SDE}{stochastic differential equation}
\newabbreviation{map}{MAP}{maximum a-posteriori}
\newabbreviation{mcmc}{MCMC}{Markov chain Monte Carlo}
\newabbreviation{mc}{MC}{Markov chain}
\newabbreviation{mmse}{MMSE}{minimum mean-squared-error}
\newabbreviation{tdv}{TDV}{total deep variation}
\newabbreviation{mri}{MRI}{magnetic resonance imaging}
\newabbreviation{iid}{i.i.d.}{independent and identically distributed}
\newabbreviation{lsc}{l.sc.}{lower semicontinuous}
\newabbreviation{tv}{TV}{total variation}
\newabbreviation{bp}{BP}{belief propagation}
\newabbreviation{gmm}{GMM}{Gaussian mixture model}
\newabbreviation{kde}{KDE}{kernel density estimation}
\newabbreviation{kld}{KLD}{Kullback-Leibler divergence}
\newabbreviation{tvd}{TVD}{total variation distance}
\newabbreviation{sor}{SOR}{successive over-relaxation}
\newabbreviation{rv}{RV}{random variable}
\newabbreviation{ila}{ILA}{inertial Langevin algorithm}
\newabbreviation{em}{EM}{Euler-Maruyama}
\newabbreviation{nila}{NILA}{Nesterov's inertial Langevin algorithm}
\newabbreviation{skrock}{SK-ROCK}{SK-ROCK}
\newabbreviation{ou}{OU}{Ornstein-Uhlenbeck}
\newabbreviation{md}{MD}{molecular dynamics}
\newabbreviation{gd}{GD}{gradient descent}
\newabbreviation{hmc}{HMC}{Hamiltonian Monte Carlo}
\newabbreviation{ses}{SES}{stochastic exponential Euler scheme}
\newabbreviation{emd}{EMD}{earth mover's distance}
\newabbreviation{acf}{ACF}{autocorrelation function}
\newabbreviation{ot}{OT}{optimal transport}
\newabbreviation{rof}{ROF}{Rudin-Osher-Fatemi}
\newabbreviation{glm}{GLM}{Gaussian latent machine}
\newabbreviation{lm}{LM}{Leimkuhler-Matthews}
\newabbreviation{ode}{ODE}{ordinary differential equation}
\newabbreviation{ebm}{EBM}{energy-based model}
\newabbreviation{pes}{PES}{potential energy surface}
\newabbreviation{egnn}{EGNN}{equivariant graph neural network}
\newabbreviation{pdf}{PDF}{probability density function}
\newabbreviation{bfgs}{BFGS}{Broyden-Fletcher-Goldfarb-Shanno}
\newabbreviation{lbfgs}{L-BFGS}{Limited-memory BFGS}
\newabbreviation{mala}{MALA}{Metropolis-adjusted Langevin algorithm}
\newabbreviation{rml}{RML}{Riemannian Langevin dynamics}
\newabbreviation{mh}{MH}{Metropolis-Hastings}
\newabbreviation{pde}{PDE}{partial differential equation}
\newabbreviation{lsi}{LSI}{log-Sobolev inequality}
\newabbreviation{plsi}{PLSI}{preconditioned log-Sobolev inequality}
\newabbreviation{name}{TIPreL}{time-inhomogeneous preconditioned Langevin algorithm}
\newabbreviation{mld}{MLD}{Mirror Langevin dynamics}
\glsdisablehyper

\newcommand{\ie}{\textit{i.e.}}
\newcommand{\eg}{\textit{e.g.}}
\newcommand{\cf}{\textit{cf.}}

\newcommand{\Pot}{\Psi}

\begin{document}

\maketitle

\begin{abstract}
Langevin sampling from distributions of the form $p(x) \propto \exp(-\Pot(x))$ faces two major challenges: (global) \emph{mode coverage} and (local) \emph{mode exploration}.
The first challenge is particularly relevant for multi-modal distributions with disjoint modes, whereas the second arises when the potential $\Pot$ exhibits diverse and ill-conditioned local mode geometry.
To address these challenges, a common approach is to precondition Langevin dynamics with problem-specific information, such as the sample covariance or the local curvature of $\Pot$.
However, existing preconditioner choices inherently involve a trade-off between global mode coverage and local mode exploration, and no prior method resolves both simultaneously.
To overcome this limitation, we propose the \gls{name}, which introduces a time- and position-dependent preconditioner. 
This design effectively addresses both challenges mentioned above within a single framework.
We establish convergence of the resulting dynamics in the Wasserstein-2 distance both in continuous time and for a tamed Euler discretization.
In particular, our analysis extends the existing state of the art by proving convergence under time- and
space-dependent diffusion coefficients, and only locally Lipschitz drifts, which has not been covered by prior work.
Finally, we experimentally compare \gls{name} with competing preconditioning schemes on a two-dimensional, severely ill-posed example and on a Bayesian logistic regression task in higher dimensions, confirming the efficiency of the proposed method.
\end{abstract}

  
  

\section{Introduction}\label{sec:introduction}
We are interested in sampling from a probability distribution $\dd \pi(x) = p(x)\dd x$ on $\R^d$ with  $d\geq 1$, characterized via a density of Gibbs-Boltzmann form
\begin{equation}\label{eq:target}
    p(x) = \frac{\exp(- \Pot(x))}{\int_{\R^d} \exp(-\Pot(\xi)) \, \dd \xi}
\end{equation}
with potential function $\Pot \colon \R^d \to \R$.
Throughout, we assume that $\Pot$ is twice continuously differentiable with $\lip$-Lipschitz continuous gradient. More specifically, we are interested in sampling from $\pi$ via preconditioned Langevin dynamics with temporally and spatially varying preconditioning.

Sampling from Gibbs distributions of the above form is a prevalent task within machine learning and Bayesian inverse problems \cite{habring2025energy,narnhofer2022posterior,zach2021computed}.
Due to the complexity of $\Pot$ in most applications, direct sampling is not feasible, and researchers resort to \gls{mcmc} methods. Among those, the most popular ones are based on discretizations of the (overdamped) Langevin \gls{sde}
\begin{equation}\label{eq:langevin_sde}
    \dd X_t = - \grad \Pot (X_t) \, \dd t + \sqrt{2} \, \dd W_t.
\end{equation}
It is well-known that under mild conditions, such as Lipschitz-smoothness and certain growth assumptions on the potential, the law of the solution of~\eqref{eq:langevin_sde} converges to its unique stationary distribution $\pi$ as $t\rightarrow \infty$ in different metrics such as the \gls{kld} or the Wasserstein-2 distance~\cite{robertsExponentialConvergenceLangevin1996}.
Similarly, discretizing \eqref{eq:langevin_sde} via, \eg, an Euler-Maruyama discretization leads to an ergodic Markov chain whose stationary distribution approximates $\pi$ as the discretization step size vanishes~\cite{dalalyanTheoreticalGuaranteesApproximate2017a,durmus2017nonasymptotic,robertsExponentialConvergenceLangevin1996}.

However, Langevin sampling suffers from slow convergence, especially in high-dimensional and/or multi-modal settings.
While there exist several techniques for acceleration of Langevin sampling, in particular based on annealing/tempering~\cite{chehab2024provable,cordero2025non,habring2026forward,habring2026diffusion} or momentum/inertia~\cite{cheng2018underdamped,falk2025inertial}, we believe that the gap to acceleration techniques in numerical optimization is still significant. 

In this article, we consider accelerating Langevin sampling by the incorporation of a time- and position-dependent preconditioning, that is, we consider for a symmetric, positive definite matrix $B(t,x)\in\R^{d\times d}$ the Langevin \gls{sde}
\begin{equation}\label{eq:precond-langevin}
    \dd X_t = [- B(t,X_t) \nabla \Pot (X_t) + \divergence B(t, X_t)] \, \dd t + \sqrt{2}B^{1/2}(t,X_t) \, \dd W_t.
\end{equation}
where the divergence of the matrix-field $B$ is defined as the row-wise divergence
\begin{equation}\label{eq:correction}
    (\divergence B(t,x))_i = \sum_{j=1}^d \partial_{x_j} B_{ij}(t,x),\quad i=1,\dots, d.
\end{equation}
Note that, in addition to the preconditioning appearing in front of the force $\nabla\Pot$ and the Brownian motion, the term $\divergence B$ within~\eqref{eq:precond-langevin} is necessary to maintain $\pi$ as a stationary distribution. Whenever $B(t,x)$ is independent of $x$, the divergence vanishes, yielding simpler dynamics.
In the case that $B(t,x)$ is, in fact, independent of $t$,~\eqref{eq:precond-langevin} is the so-called \gls{rml}, \ie, Langevin dynamics on a Riemannian manifold with metric $B^{-1}(x)$~\cite{girolami_riemann_2011}.
Let us denote for simplicity the drift as $b(t,x) = - B(t,x) \nabla \Pot (x) + \divergence B(t, x)$. As the space dependence of the preconditioner leads to a loss of global Lipschitz continuity of $b(t,x)$, we employ a tamed Euler discretization. More precisely, as proposed in~\cite{brosse2019tamed} we consider as a discrete scheme approximating~\eqref{eq:precond-langevin} 
\begin{equation}\label{eq:tamed_euler}
    Y_{t_{k+1}} =Y_{t_{k}} + \step_k b_{\step_k}(t_k,X_k) + \sqrt{2\step_k}B^{1/2}(t_k,Y_{t_{k}}) Z_k, \quad Z_k\sim\Nc(0,\id)
\end{equation}
with step size $\step_k>0$, time points $t_k = \sum_{\ell =0}^{k-1}\step_\ell$ and the tamed drift
\begin{equation}
    b_\step(t,x) = \frac{b(t,x)}{1+\step |b(t,x)|}.
\end{equation}
This process corresponds to a sampling at the time-steps $(t_k)_k$ of \gls{sde} defined piece-wise for $t\in (t_k,t_{k+1}]$ as
\begin{equation}
    \dd Y_t = b_{\step_k}(t_k,Y_{t_k})\dd t + \sqrt{2}B^{1/2}(t_k,Y_{t_{k}})\dd W_t.
\end{equation}
While the considered algorithm will be proven to converge to the target under rather general assumptions on $B(t,x)$, inspired by quasi-Newton methods in optimization, we will specifically consider approximations of the inverse Hessian $B(t,x)\approx \nabla^2 \Pot(x)^{-1}$ (see~\cref{sec:numerical} for details) as well as time-dependent convex combinations of the form $B(t,x) = \lambda_t B_1(x) + (1-\lambda_t) B_0$ where $B_0\in\R^{d\times d}$ is a global (\ie, position-independent) preconditioner, $B_1(x)\in \R^{d\times d}$ is the above mentioned approximation of the Hessian, and $\lambda_t\in [0,1]$ satisfies $\lambda_t\rightarrow 1$ as $t\rightarrow \infty$.

\paragraph{Contributions}
In summary, we provide the following contributions.
\begin{itemize}
    \item We propose \gls{name}, Langevin sampling with time- and position-dependent preconditioning. Specifically, we propose to use a time-dependent convex combination of the (estimated) covariance and a quasi-Newton matrix to combine global and local curvature information in an ideal manner.
    \item We provide a rigorous and complete convergence analysis of the proposed method. This convergence analysis provides a novelty beyond the considered problem of preconditioned Langevin sampling since we prove convergence of Langevin sampling via a tamed Euler-Maruyama discretization for locally Lipschitz drift and time-dependent drift and diffusion. Under such general conditions these convergence results have not been available yet to our knowledge.
    \item We provide numerical results in two dimensions as well as for a high-dimensional Bayesian logistic regression example. The numerical experiments confirm the theoretical results and the efficacy of the proposed preconditioning.
\end{itemize}

\paragraph{Outline}
The remainder of the article contains a discussion of related works in~\cref{sec:related-work}, a short overview over the theoretical analysis of the method in~\cref{sec:theory} with the detailed proofs in the appendix, \cref{sec:appendix_proofs}, and numerical results for low-dimensional, but severely ill-posed potentials as well as for higher-dimensional Bayesian logistic regression in~\cref{sec:numerical}.

\section{Related work}\label{sec:related-work}

In optimization, preconditioners are used to accelerate the gradient-based minimization of ill-conditioned objective functions (see, \eg, \cite[Chapter 5]{nocedal_numerical_2006}).
The observed gain in convergence speed has led to the adoption of various preconditioning schemes in the field of \gls{mcmc} methods~\cite{girolami_riemann_2011,hanson_posterior_1998,leimkuhler_ensemble_2018,martin_stochastic_2012,qi_hessian-based_2002,xifara_langevin_2014,zhang_quasi-newton_2011}.
The simplest way to improve the behavior of the potential function in the Langevin \gls{mcmc} case is to employ a \emph{global}, \ie, time- and space-invariant, preconditioner $B$~\cite{girolami_riemann_2011}.
There exist various proposals for choices of $B$ most prominently, (the empirical estimate of) the target covariance $B=\cov(X)$~\cite{leimkuhler_ensemble_2018} or the inverse of the expected Fisher information matrix $B = \E_{X \sim\pi}[\nabla^2 \Pot(X)]^{-1}$~\cite{titsias2023optimal,wang_solving_2025} where it should be noted that the latter is the provably optimal global preconditioner with respect to the expected squared jump distance~\cite{titsias2023optimal}.

\paragraph{Position-dependent preconditioning} Position dependent preconditioning is significantly more difficult due to the appearance of the divergence correction in~\eqref{eq:precond-langevin}. Nonetheless, multiple efforts in this direction have been made.
As mentioned above, if the preconditioner depends on the position $x$ but not on the time $t$, the dynamics are, in fact, an instance of \gls{rml}, which have been analyzed extensively in the literature~\cite{bharath2025sampling,gatmiry2022convergence,girolami_riemann_2011,zhan2026riemannian}. However, it is worth mentioning that several of the proposed approaches analyze the geometric Euler-Maruyama discretization which assumes access to the manifold exponential~\cite{gatmiry2022convergence,zhan2026riemannian}.
In several methods it has been proposed to avoid the (possibly expensive) computation of the correction term $\divergence B$ to maintain invariance of $\pi$ and instead metropolize the resulting biased scheme~\cite{martin_stochastic_2012,srinivasan2024high,xifara_langevin_2014}.

\paragraph{Time-inhomogeneous preconditioning}
A crucial contribution of the proposed work is the incorporation of time-dependence of the preconditioner. Sampling using time-inhomogeneous Langevin dynamics has recently received increased attention via, \eg, tempering~\cite{chehab2024provable}, annealed Langevin dynamics~\cite{baldassari2026dimension,cordero2025non,habring2026forward,song2019generative}, or Moreau envelope-based approximations of the target $\pi$~\cite{habring2026diffusion}. All of these approaches, however, differ to the considered setting in two aspects: first, there is no time- or space-dependence of the diffusion term in the mentioned works, and second, the considered Langevin dynamics in these articles do not admit an invariant (but only a limiting) distribution as they aim at a \emph{moving target} whereas in our case the invariant distribution remains the same independently of the value of $t$ in $B(t,x)$.

\section{Theoretical results}\label{sec:theory}

\subsection{Notation and preliminaries}
The set of all (Borel) probability measures on $\R^d$ is denoted as $\Pc(\R^d)$ and we denote the set of measures with finite $p$-th moment as $\Pc_p(\R^d)$. We denote the distribution of the solution of the continuous time dynamics~\eqref{eq:precond-langevin} as $\mu_t = \law(X_t)$ and the distribution of the approximation as $\hat \mu_{t}=\law (Y_t)$. We will denote the step sizes of the discretization as $\step_k$, one step size sequence as $\bstep = (\step_0,\step_1,\dots)$ and correspondingly $t_k = \sum_{\ell =0}^{k-1}\step_\ell$.
Moreover, we denote the \emph{solution operator} of~\eqref{eq:langevin_sde} as $(P_{s,t})_{s\leq t}$. That is, with $(X_t)_t$ denoting a solution of \eqref{eq:langevin_sde}, we have for any sufficiently smooth $f:\R^d\rightarrow \R$
\begin{equation}
    P_{s,t} f(x) = \E[f(X_t)|X_s=x].
\end{equation}
Analogously, we define the Markov kernels $R_\step(t)$ corresponding to the discrete chain~\eqref{eq:tamed_euler} via
\begin{equation}
    R_\step(t)f(x) = (2\pi)^{-d/2}\int f\big(x + \step b_\step(t,x) + \sqrt{2\step}B^{1/2}(t,x)\big) \expo{-\frac{|z|^2}{2}}\dd z
\end{equation}
We denote the concatenation of multiple discrete steps as $Q_{t_\ell,t_k}^\bstep = R_{\step_\ell}(t_\ell)R_{\step_{\ell+1}}(t_{\ell+1})\cdots R_{\step_k}(t_k)$. The action of the adjoints of $P_{s,t}$, $R_\step (h)$, and $Q_{t_\ell,t_k}^\bstep$ on measures $\mu\in\Pc(\R^d)$ is denoted as $\mu P_{s,t}$, $\mu R_\step (t)$, and $\mu Q_{t_\ell,t_k}^\bstep$. 
As is customary, for $\mu\in\Pc(\R^d)$ and $f:\R^d\rightarrow \R$ we denote $\mu(f)\coloneqq \int f\dd \mu$. For any $a,b\in\R$ we denote $\min\{a,b\} = a\wedge b$ and $\max\{a,b\} = a\vee b$.

\subsection{Main results}\label{ssec:newton-langevin}
Before stating and discussing the main results we list the necessary assumptions.
\begin{ass}\label{ass}\
    \begin{enumerate}
        \item The potential $\Pot\in C^2(\R^d)$ with $\nabla \Pot$ $\lip$-Lipschitz. Without loss of generality $\nabla \Pot (0)=0$.
        \item $B(t,x)$ is bounded above and below, \ie, $\minB \id \preceq B(t,x)\preceq \maxB \id$ for some $\minB,\maxB>0$.
        \item $B(t,x)$ is twice continuously differentiable with respect to $x$ and both $B$ and $\divergence B$ are locally Hölder in time and locally Lipschitz in space in the following sense: There exist $\minB, \maxB, \lipB, \nB >0$, $\holderB\geq 1/2$ such that for all $x,y\in \R^d$, $s,t>0$ it holds
        \begin{equation}
            \begin{aligned}
                |B(t,x)-B(t,y)|\vee |\divergence B(t,x)-\divergence B(t,y)|&\leq \lipB (1+|x|^\nB + |y|^\nB) |x-y|\\
                |B(s,x)-B(t,x)|\vee |\divergence B(s,x)-\divergence B(t,x)|&\leq \lipB (1+|x|^\nB) |s-t|^\holderB
            \end{aligned}
        \end{equation}

        \item Denoting $b(t,x) = -B(t,x)\nabla \Pot (x) + \divergence B(t,x)$, we have 
        \begin{equation}
            \begin{aligned}
                \lim_{r\rightarrow\infty}\sup_{|x|\geq r,\, t\geq 0}\left\langle \frac{x}{|x|}, \frac{b(t,x)}{|b(t,x)|}\right\rangle&<0,\quad \text{and }
                \lim_{r\rightarrow\infty}\inf_{|x|\geq r,\, t\geq 0}|b(t,x)|=\infty.
            \end{aligned}
        \end{equation}
    \end{enumerate}
\end{ass}
\begin{rmk}\
    \begin{enumerate}

        \item The proofs work identically, when $\nabla \Pot$ is also only locally Lipschitz similar to $B$, \ie, for a $n\in \N$
        \begin{equation}
                |\nabla\Pot(x)-\nabla\Pot(y)|\vee |\divergence B(t,x)-\divergence B(t,y)|\leq \lipB (1+|x|^n + |y|^n) |x-y|.
        \end{equation}
        \item Of course, the local Lipschitz constants and the growth constants $\lip, \nB$ could be different for the different assumptions above, but are assumed the same for notational simplicity.
        \item The last assumption is in particular satisfied, if $\divergence B$ is bounded and 
        \begin{equation}
            \begin{aligned}
                \lim_{r\rightarrow\infty}\sup_{|x|\geq r,\, t\geq 0}\left\langle \frac{x}{|x|}, \frac{B(t,x) \nabla \Pot (x)}{|B(t,x) \nabla \Pot (x)|}\right\rangle&<0
                ,\quad \lim_{r\rightarrow\infty}\inf_{|x|\geq r,\, t\geq 0}|B(t,x)\nabla \Pot(x)|=\infty.
            \end{aligned}
        \end{equation}
        This assumption, in turn, can be ensured by requiring the identical assumption on the potential without the preconditioning $B$ together with some conformity, that is, the angle-preserving property of the preconditioner $B$.
    \end{enumerate}
\end{rmk}
Inspired by~\cite{chewi_exponential_2020}, we additionally define a \gls{plsi} as follows.
\begin{definition}[Preconditioned log-Sobolev inequality]
    We say that $\nu$ satisfies \gls{plsi} with preconditioner $B:\R^d\rightarrow\R^{d\times d}$, if there exists $\Cplsi$ such that
    \begin{equation}\label{eq:LSI}
        \int f^2\log\frac{f^2}{\int f^2\dd\nu}\dd\nu\leq \Cplsi \int |\nabla f|^2_B\dd\nu, \quad \text{for all locally Lipschitz $f\in L^2(\nu)$.}
        \tag{PLSI}
    \end{equation}
\end{definition}
As a second assumption, we require the target $\pi$ to satisfy such a \gls{plsi}.
\begin{ass}\label{ass:plis}
    For all $t$, $\pi$ satisfies \gls{plsi} with preconditioner $B(t,x)$ and constant $\Cplsi(t)$.
\end{ass}
\begin{rmk}\
    \begin{enumerate}
        \item Note that in our setting due to boundedness and uniform positive definiteness, the \gls{plsi} is equivalent to the regular \gls{lsi}, \ie, with preconditioner $B=\id$. However, we expect $\Cplsi$ to be smaller for well-chosen $B(t,x)$ in comparison to the case $B=\id$.
        \item An \gls{lsi} is, in particular satisfies, when the potential is dissipative, that is, $\langle\nabla\Pot(x),x\rangle\geq a|x|^2 + b$ for some $a>0$, $b\in \R$ and all $x\in\R^d$.
    \end{enumerate}
\end{rmk}

As a first result we can derive exponential convergence as in the setting without preconditioning. However, in this case the convergence rate will be improved for well-chosen $B(t,x)$ due to the smaller value of $\Cplsi$.
\begin{theorem}[Exponential convergence of the continuous-time dynamics]\label{thm:cont_time exp convergence}
    Let~\cref{ass,ass:plis} hold. Then, the preconditioned Langevin dynamics~\eqref{eq:precond-langevin} satisfy
    \begin{equation}
        \KL(\mu_t|\pi) \leq \exp\bigg(-\int_0^t\frac{4}{\Cplsi(s)}\dd s\bigg)\KL(\mu_0|\pi).
    \end{equation}
\end{theorem}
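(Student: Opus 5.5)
The plan is the standard entropy-dissipation (de Bruijn) argument, adapted to a time-dependent generator. First I rewrite the generator of \eqref{eq:precond-langevin} in divergence form. For $t$ fixed, the drift $b(t,x)=-B\nabla\Pot+\divergence B$ and diffusion matrix $2B$ give
\begin{equation*}
    \Lc_t f = \langle -B(t,\cdot)\nabla\Pot + \divergence B(t,\cdot),\nabla f\rangle + \trace\big(B(t,\cdot)\nabla^2 f\big) = e^{\Pot}\divergence\big(e^{-\Pot}B(t,\cdot)\nabla f\big),
\end{equation*}
where the symmetry of $B$ is used. Hence, for every $t$, $\Lc_t$ is symmetric in $L^2(\pi)$ and $\pi$ is invariant. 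The Fokker--Planck equation for $\mu_t$ then becomes
\begin{equation*}
    \partial_t \mu_t = \divergence\Big(\mu_t\, B(t,\cdot)\nabla \log\frac{\mu_t}{\pi}\Big).
\end{equation*}

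Next, I differentiate the relative entropy along the flow. Writing $\rho_t=\dd\mu_t/\dd\pi$, integrating by parts, and using $\int\partial_t\mu_t=0$, I get
\begin{equation*}
    \frac{\dd}{\dd t}\KL(\mu_t|\pi) = -\int \Big|\nabla\log\rho_t\Big|_{B(t,\cdot)}^2\dd\mu_t = -4\int \big|\nabla\sqrt{\rho_t}\big|^2_{B(t,\cdot)}\dd\pi .
\end{equation*}
This is the preconditioned Fisher information. I then apply \cref{ass:plis} with $f=\sqrt{\rho_t}$. Since $\int f^2\dd\pi=1$, the left side of \eqref{eq:LSI} equals $\KL(\mu_t|\pi)$, so $\KL(\mu_t|\pi)\le \Cplsi(t)\int|\nabla\sqrt{\rho_t}|^2_{B(t,\cdot)}\dd\pi$. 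This yields
\begin{equation*}
    \frac{\dd}{\dd t}\KL(\mu_t|\pi)\le -\frac{4}{\Cplsi(t)}\KL(\mu_t|\pi).
\end{equation*}
Grönwall's lemma, in the time-inhomogeneous form (integrate $\frac{\dd}{\dd t}\log\KL$, or multiply by $\exp(\int_0^t 4/\Cplsi(s)\dd s)$), gives the claim. Measurability and local integrability of $1/\Cplsi$ can be assumed or handled by taking $\Cplsi(t)$ to be the optimal constant.

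The main obstacle is justifying these formal manipulations. This requires the following facts:
\begin{itemize}
    \item well-posedness of the SDE, which holds since the coefficients are locally Lipschitz by \cref{ass}, with non-explosion following from the dissipativity in \cref{ass}(4);
    \item $\mu_t$ has a smooth positive density for $t>0$, by uniform ellipticity $\minB\id\preceq B\preceq\maxB\id$ and $C^2$ regularity in $x$;
    \item the boundary terms in the integrations by parts vanish and differentiation under the integral is allowed.
\end{itemize}
I would handle this by a regularization argument. Assume $\KL(\mu_0|\pi)<\infty$, since otherwise there is nothing to prove. I prove the integrated inequality $\KL(\mu_t|\pi)-\KL(\mu_s|\pi)\le-\int_s^t\mathcal I_r\dd r$ for $s>0$, via a smooth truncation $\log(\rho_t+\epsilon)$ and cutoff functions $\chi_R$. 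I control the tails using moment bounds obtained from a Lyapunov function $V(x)=|x|^2$, which is possible by \cref{ass}(4) together with the boundedness of $B$. I then let $\epsilon\to0$, $R\to\infty$, and finally $s\downarrow0$ using lower semicontinuity of $\KL$. The entropy inequality in integral form combined with the PLSI then still gives the Grönwall conclusion.
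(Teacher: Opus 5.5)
Your main line is the paper's proof. You rewrite the Fokker--Planck equation as $\partial_t\mu_t=\divergence\big(\mu_t B(t,\cdot)\nabla\log\rho_t\big)$ with $\rho_t=\dd\mu_t/\dd\pi$. You differentiate $\KL(\mu_t|\pi)$ to get minus the $B(t,\cdot)$-weighted Fisher information, bound it via \eqref{eq:LSI} with $f=\sqrt{\rho_t}$, and close with the integrating factor $\exp\big(\int_0^t 4/\Cplsi(s)\,\dd s\big)$. The paper packages exactly this substitution as the lemma stated just before the proof.

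The paper's argument is purely formal and defers rigor to an external reference, so your regularization paragraph goes beyond it. However, its last step fails as written. To let $s\downarrow 0$ in $\KL(\mu_t|\pi)\le \exp\big(-\int_s^t 4/\Cplsi(r)\,\dd r\big)\KL(\mu_s|\pi)$, you need $\limsup_{s\downarrow 0}\KL(\mu_s|\pi)\le\KL(\mu_0|\pi)$. Lower semicontinuity of $\KL$ under the weak convergence $\mu_s\to\mu_0$ gives only $\KL(\mu_0|\pi)\le\liminf_{s\downarrow 0}\KL(\mu_s|\pi)$, which is the opposite inequality.

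What is missing is an a priori bound from time $0$. Since $\int \Lc_t f\,\dd\pi=0$ for all test functions $f$ and every $t$, $\pi$ is a constant-in-time solution of the Fokker--Planck equation. Probability solutions of that equation are unique under your Lyapunov condition, so $\pi P_{0,s}=\pi$ for the two-parameter solution operator. The data-processing inequality then gives $\KL(\mu_s|\pi)=\KL(\mu_0 P_{0,s}|\pi P_{0,s})\le\KL(\mu_0|\pi)$. Combined with lower semicontinuity, this yields $\KL(\mu_s|\pi)\to\KL(\mu_0|\pi)$ as $s\downarrow 0$, and your argument closes.
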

\begin{proof}
    See~\cref{sec:appendix_proof_cont_ergodic}.
\end{proof}
The next crucial result is a bound on the discretization error. This result is significantly more involved in the considered case for two reasons: The lack of global Lipschitz continuity of the drift and the non-constant Brownian motion. The former is notoriously known to lead to instability of basic Langevin-based sampling and is countered by the taming (\cf~\cite{brosse2019tamed}). The latter, in turn, prohibits the application Girsanov's theorem or coupling techniques due to the discrepancy in the Brownian motion between the continuous-time dynamics and its discretization
(\cf~\cite{brosse2019tamed,durmus2017nonasymptotic}).
\begin{theorem}[Discretization error]\label{lemma_discretization_error}
    Assume $X_0=Y_0\sim\mu$. For every $\alpha$ there exist constants $C_\alpha, C_1, C_2(R), C_3(R)>0$ such that for any $\bstep$, $T>0$, and $t\leq T$ it holds true that
    \begin{equation}\label{eq:discretization_statement}
        \E[|X_{t}- Y_{t}|^2]\leq C_1\sqrt{2\expo{-\alpha (R -C_\alpha T)} \mu(V_\alpha)} + \expo{TC_2(R)t}T C_3(R)\sum_{\ell\geq 0}\1_{t_\ell\leq t}\step_\ell^2
    \end{equation}
    where $k = \min\{\ell\,|\, t_\ell\geq t\}$.
    In particular, for constant step sizes, \ie, $\step_k = \step$ for all $k$
    \begin{equation}
        \E[|X_{t}- Y_{t}|^2]\leq C_1\sqrt{2\expo{-\alpha (R -C_\alpha T)} \mu(V_\alpha)} + \expo{TC_2(R)t}T^2 C_3(R)\step.
    \end{equation}
\end{theorem}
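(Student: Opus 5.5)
The plan is a synchronous coupling of $X$ and $Y$, driven by the same Brownian motion and started from $X_0=Y_0$, combined with localization to a ball of radius $R$. The localization is handled by a Lyapunov function $V_\alpha(x)=\exp(\alpha\sqrt{1+|x|^2})$. By \cref{ass}(4), the drift points inward and becomes large outside a ball, and this survives taming because $b_\step/|b_\step|=b/|b|$. Since $B\preceq\maxB\id$, the diffusion term is bounded. So for both the continuous generator and the tamed one-step kernels $R_\step(t)$ I expect a bound $\mathcal{L}V_\alpha\le C_\alpha V_\alpha$, uniformly in $t$ and in $\step$ up to some maximal step. Grönwall then gives $\E[V_\alpha(X_t)]\vee\E[V_\alpha(Y_t)]\le e^{\alpha C_\alpha T}\mu(V_\alpha)$ for $t\le T$. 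Define the stopping time $\tau_R=\inf\{s: |X_s|\vee|Y_s|\vee|Y_{t_k}|\ge R\}$. A maximal-inequality version of the Lyapunov bound (Doob applied to $e^{-\alpha C_\alpha s}V_\alpha(X_s)$, which is a supermartingale) should give $\P(\tau_R\le T)\lesssim e^{-\alpha(R-C_\alpha T)}\mu(V_\alpha)$; here I will rescale $R$ so that $\log V_\alpha\approx \alpha R$.

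The first step is the split
\[
\E|X_t-Y_t|^2=\E\big[|X_t-Y_t|^2\1_{\tau_R\le t}\big]+\E\big[|X_{t\wedge\tau_R}-Y_{t\wedge\tau_R}|^2\1_{\tau_R>t}\big].
\]
Cauchy--Schwarz bounds the first term by $\sqrt{\E|X_t-Y_t|^4}\sqrt{\P(\tau_R\le t)}$. Fourth moments are bounded by $V_\alpha$-moments, so this term is absorbed into $C_1\sqrt{2e^{-\alpha(R-C_\alpha T)}\mu(V_\alpha)}$; the factor $2$ comes from adding the two tail probabilities for $X$ and $Y$.

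For the stopped term, write $e_s=X_s-Y_s$ and apply Itô to $|e_{s\wedge\tau_R}|^2$. On $\{s<\tau_R\}$ all points lie in $B_R$, where $b$, $B^{1/2}$ and $\divergence B$ are Lipschitz in space with constants $C(R)$ that grow polynomially in $R$. Here $B^{1/2}$ inherits Lipschitz continuity from $B$ because $B\succeq\minB\id$. They are also $\holderB$-Hölder in time on $B_R$. The drift difference $b(s,X_s)-b_{\step_k}(t_k,Y_{t_k})$ splits into four pieces: the spatial part $b(s,X_s)-b(s,Y_s)$, contributing $C(R)|e_s|^2$; the time shift $|s-t_k|^{\holderB}\le\step_k^{1/2}$ in $b$; the increment $Y_s-Y_{t_k}$, whose second moment is $O(\step_k)$ on $B_R$; and the taming error $|b-b_\step|\le\step|b|^2\le \step C(R)$.

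The diffusion difference $B^{1/2}(s,X_s)-B^{1/2}(t_k,Y_{t_k})$ enters the Itô correction quadratically, so it contributes $C(R)(|e_s|^2+\step_k^{2\holderB}+\step_k)$. The local error terms of order $\step_k$ have to be integrated over $[t_k,t_{k+1}]$, and Young's inequality is used on the cross terms. In this way the local contributions sum to $C_3(R)\,T\sum_{t_\ell\le t}\step_\ell^2$, with the factor $T$ coming from Cauchy--Schwarz in time. Grönwall then yields $e^{TC_2(R)t}$.

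The main obstacle is that a crude treatment of the time-Hölder term and of the increment $Y_s-Y_{t_k}$ in the drift yields only $\step_k^{2\holderB}\cdot\step_k\ge\step_k^2$ when $\holderB\ge 1/2$. This is consistent with the stated rate, which is exactly why $\holderB\ge1/2$ is assumed. The difficulty is the martingale part of $Y_s-Y_{t_k}$ paired with $e_s$ in the cross term $\langle e_s, b(s,Y_s)-b(s,Y_{t_k})\rangle$, which naively gives only $O(\step_k^{1/2})$ per unit time. I would try first to bound it by $|e_s|^2+C(R)|Y_s-Y_{t_k}|^2$ with Young's inequality, yielding $\step_k$ per unit time and hence $\step_k^2$ per step, which is the bound claimed. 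I will also check that the taming bound $\step|b|^2$ stays uniform on $B_R$, which it does by continuity. The constant-step corollary follows by summing $\lfloor t/\step\rfloor+1\lesssim T/\step$ terms.
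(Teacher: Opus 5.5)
Your argument has the paper's overall structure. You couple $X$ synchronously with the interpolated $Y$ and localize by escape times whose probabilities decay exponentially in $R$ via $V_\alpha$. On the escape event you use Cauchy--Schwarz against fourth moments, and for the stopped error you run a Grönwall estimate in which the local error sources (taming, time-Hölder shift, $Y_s-Y_{t_k}$) are at most of order $\step_k$ per unit time. Your variations are harmless. It\^o's formula for $|e_{s\wedge\tau_R}|^2$ with Young's inequality replaces the paper's integral representation with Cauchy--Schwarz in time and It\^o's isometry. Doob's inequality for the supermartingale $e^{-\alpha C_\alpha s}V_\alpha$ replaces the exponential-martingale bounds of \cref{lemma:escape_time_cont,lemma:escape_time_discrete}.

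The Doob route is in fact cleaner for the chain. It only needs $R_\step(t)V_\alpha\le e^{C_\alpha\step}V_\alpha$, which the Gaussian-concentration computation in \cref{lemma:Lyapunov_discrete} gives for every $\alpha$; the restriction $a<a_0$ there is only needed for contraction. By contrast, the increment $\xi_k$ in the paper's discrete argument contains a centred $\chi^2$ part and is not conditionally Gaussian.

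Two small bookkeeping points. First, your $T$-dependent fourth-moment bound has to be absorbed by enlarging $C_\alpha$; the paper uses the uniform-in-time bounds of \cref{cor:moment_bounds_cont,lemma:moment_bounds} instead. Second, an It\^o/Young argument produces no factor $T$ from Cauchy--Schwarz in time. What you actually get is $e^{C(R)t}C(R)\sum_{t_\ell\le t}\step_\ell^2$, which has the stated form for $T\ge 1$. In the paper, too, only the $\dd s$-part carries the factor $T$.

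The step that does not go through as written is the tail bound for your $\tau_R$, because it localizes the continuous interpolation $Y_s$ and not just the grid values. A drift condition for the kernels $R_\step(t)$ plus discrete Doob only controls $\max_k|Y_{t_k}|$. A generator argument for the frozen-coefficient equation on $(t_k,t_{k+1}]$ fails: $\langle y,b_{\step_k}(t_k,Y_{t_k})\rangle$ has no sign for $y\neq Y_{t_k}$, and $|b_{\step_k}|$ can be as large as $1/\step_k$. So you only get a bound of order $\alpha V_\alpha/\step_k$, which compounds to a useless factor of order $e^{\alpha T/\step}$.

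The simplest repair is the paper's device, in four steps.
\begin{enumerate}
\item Stop at $T\wedge\tau_R\wedge t_{k_R}$, with $k_R$ defined through grid points only.
\item Apply \cref{lemma:drift_locally_lip}, and the analogous bound for $B^{1/2}$, directly to $b(s,X_s)-b(t_k,Y_{t_k})$. Its prefactor $1+|X_s|^{\nB+1}+|Y_{t_k}|^{\nB+1}$ is at most $C(R)$ before the stopping time.
\item Split $|X_s-Y_{t_k}|\le|e_s|+|Y_s-Y_{t_k}|$ and use $Y_s-Y_{t_k}$ only in mean square.
\item Note that the stochastic integral in It\^o's formula remains a true martingale by the moment bounds.
\end{enumerate}

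Alternatively, keep your $\tau_R$. The point $Y_{t_k}+(s-t_k)b_{\step_k}(t_k,Y_{t_k})$ lies on the segment between $Y_{t_k}$ and $Y_{t_k}+\step_k b_{\step_k}(t_k,Y_{t_k})$. Hence $|Y_s|\le(|Y_{t_k}|^2+C\step_k)^{1/2}+\sqrt{2\maxB}\sup_{[t_k,t_{k+1}]}|W_s-W_{t_k}|$. Combine the grid tail with a union bound over these Brownian suprema.
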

\begin{proof}
    See~\cref{sec:appendix_proof_discretization}.
\end{proof}

\begin{rmk}
    We want to briefly comment on the the structure of the above result. Due to the non-global Lipschitz continuity of the drift, within the proof of~\cref{lemma_discretization_error} we consider the escape times
    \begin{equation}
        \tau_R = \inf\{t\geq 0\,|\, |X_t|\geq R\},\quad\text{and}\quad k_R = \inf\{k\in \N\,|\, |Y_{t_k}|\geq R\}.
    \end{equation}
    By definition, whenever $t\leq \tau_R\wedge t_{k_R}$, we can restrict the analysis to the set $\{x\in\R^d\,|\, |x|< R \}$ on which the drift is Lipschitz. On the other hand, in~\cref{sec:appendix discrete} we derive the exponential bounds
    \begin{equation}
        \P[t_{k_R}\leq T] \vee \P[\tau_R\geq T]
        \leq \expo{-\alpha((1+R^2)^{1/2} - C_\alpha T)} \mu (V_\alpha)
    \end{equation}
    where $V_\alpha(x) = \expo{(1+|x|^2)^{1/2}}$ for $\alpha>0$ and $C_\alpha>0$. Thus, in~\eqref{eq:discretization_statement} the first term precisely reflects to the bound on the probabilities of the escape times being small and the second term the discretization error before the escape times.
\end{rmk}

\begin{theorem}[Convergence of the discrete scheme]\label{thm:convergence}
    The discrete scheme satisfies the following:
    \begin{enumerate}
        \item If the step sizes satisfy $\step_\ell\leq 1$ and they are not summable but square-summable, that is, $\sum_{\ell\geq 0}\step_\ell = \infty$, $\sum_{\ell\geq 0}\step_\ell^2 < \infty$
        then it holds $\lim_{k\rightarrow\infty} W_2(\mu_0 Q^\bstep_{0,t_k},\pi)=0$.
        \item If the step size is constant, \ie, $\step_\ell=\step$ for all $\ell$, then for every $\epsilon$, there exists $\delta>0$ such that for $\step<\delta$, it holds $\limsup_{k\rightarrow\infty} W_2(\mu_0 Q^\bstep_{0,t_k},\pi)<\epsilon$.
    \end{enumerate}
\end{theorem}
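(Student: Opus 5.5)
We combine \cref{thm:cont_time exp convergence} with \cref{lemma_discretization_error} through a restart (semigroup splitting) argument. For a target accuracy $\epsilon>0$, we pick a time window $T>0$ and split the index sequence into blocks of duration about $T$. Over one block, starting from the current law $\nu=\mu_0Q^\bstep_{0,t_j}$, the triangle inequality gives
\begin{equation*}
    W_2(\nu Q^\bstep_{t_j,t_k},\pi)\leq W_2(\nu Q^\bstep_{t_j,t_k},\nu P_{t_j,t_k}) + W_2(\nu P_{t_j,t_k},\pi).
\end{equation*}
The first term is controlled by the synchronous coupling $X_{t_j}=Y_{t_j}\sim\nu$ and \cref{lemma_discretization_error}, since $W_2^2\leq\E|X_t-Y_t|^2$. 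The second term is converted from the KL bound of \cref{thm:cont_time exp convergence} via Talagrand's inequality $W_2^2(\cdot,\pi)\leq \Cplsi'\KL(\cdot|\pi)$. This inequality holds because \ref{eq:LSI} together with $B\succeq\minB\id$ gives an ordinary LSI for $\pi$, and LSI implies $T_2$ by Otto--Villani. The resulting bound is $W_2^2(\nu P_{t_j,t_k},\pi)\lesssim \exp(-4(t_k-t_j)/\sup_s\Cplsi(s))\KL(\nu|\pi)$, where we assume $\sup_s\Cplsi(s)<\infty$. Since $\Cplsi(s)$ is comparable to the unpreconditioned constant, this is uniform.

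Two uniform-in-time estimates along the discrete chain are needed for the restart. The first is $\sup_k \mu_0 Q^\bstep_{0,t_k}(V_\alpha)<\infty$, which follows from the Lyapunov drift estimates of \cref{sec:appendix discrete} (used in the escape-time bounds) under Assumption~\ref{ass}(4) and $\step_\ell\leq 1$. This makes the first term of \eqref{eq:discretization_statement} uniformly small once $R$ is large relative to $T$. The second is a uniform bound on $\KL(\nu|\pi)$ at restart times. Because KL may blow up, a more robust option is to avoid KL altogether: we run the continuous dynamics for unit time first, or use a $W_2$-contraction/regularization of $P_{s,t}$. Concretely, we bound $\KL(\nu P_{t_j,t_j+1}|\pi)$ by a function of the moments of $\nu$ via a short-time regularization (log-Harnack-type) estimate for the uniformly elliptic diffusion \eqref{eq:precond-langevin}. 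Then the exponential decay over the remaining time $T-1$ drives $W_2(\nu P_{t_j,t_j+T},\pi)\leq \epsilon/2$ uniformly over all $\nu$ in the moment-bounded class.

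The choice of parameters proceeds in order. Choose $T$ so that the continuous term is below $\epsilon/2$. Then choose $R$ so that $C_1\sqrt{2e^{-\alpha(R-C_\alpha T)}\sup\mu(V_\alpha)}<\epsilon^2/8$. Finally, control the remaining term $e^{TC_2(R)t}TC_3(R)\sum\step_\ell^2$ over the block. For part 2, with constant step, this term is at most $e^{C_2(R)T^2}T^2C_3(R)\step$, which is small for $\step<\delta$. This gives $W_2(\mu_0Q^\bstep_{0,t_k},\pi)<\epsilon$ for all $k$ beyond the first block, and hence the $\limsup$ bound. For part 1, the block-wise sum $\sum_{t_j\leq t_\ell\leq t_j+T}\step_\ell^2$ is a tail of a convergent series and tends to $0$ as $j\to\infty$. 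Since $\sum\step_\ell=\infty$, the blocks exist indefinitely. For every $\epsilon$ we therefore eventually get $W_2<\epsilon$, and hence the limit is $0$.

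The main obstacle is the uniform-in-time control at restart. One part is the moment bound $\sup_k\mu_0Q^\bstep_{0,t_k}(V_\alpha)$ for the tamed scheme with merely locally Lipschitz drift. The other is converting a $W_2$ or moment-controlled law into a finite KL after short time, since \cref{thm:cont_time exp convergence} is stated in KL. For the latter I would first try a log-Harnack inequality $\KL(\delta_xP_{s,s+1}|\delta_yP_{s,s+1})\lesssim |x-y|^2$ locally, combined with the $V_\alpha$ tails. If that fails, I would replace it with a direct reflection-coupling $W_1$/$W_2$ contraction for $P_{s,t}$ using the dissipativity in Assumption~\ref{ass}(4).
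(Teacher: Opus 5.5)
Your skeleton matches the paper's proof. You split at a restart index $k_1$ with $T\le t_k-t_{k_1}\le T+1$. You bound $W_2(\mu_0Q^\bstep_{0,t_{k_1}}P_{t_{k_1},t_k},\pi)$ by Talagrand's inequality and \cref{thm:cont_time exp convergence}, and the other term by \cref{lemma_discretization_error} plus a uniform $V_\alpha$-moment bound. Then you choose $T$, then $R$, then $k_0$ (resp.\ $\step$).

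The gap is exactly the step you call the main obstacle. You never establish a bound on the relative entropy at (or one time unit after) the restart time that is uniform in $k_1$ and $\bstep$. Without it, the KL decay of \cref{thm:cont_time exp convergence} gives nothing uniform.

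Your log-Harnack route needs an estimate $\KL(\delta_xP_{s,s+1}|\delta_yP_{s,s+1})\le C(x,y)|x-y|^2$, with polynomially growing $C$, for an SDE with multiplicative noise $\sqrt{2}B^{1/2}(t,x)$ and only locally Lipschitz coefficients. Nothing in the paper supplies this. The standard coupling-by-change-of-measure proofs use global hypotheses: a Lipschitz diffusion coefficient and a one-sided Lipschitz drift. Both fail here, since $B^{1/2}$ and $b$ have polynomially growing Lipschitz moduli, so you would have to localize and control exit times yourself. The fallback is weaker. Standard reflection coupling needs contractivity at infinity for \emph{differences}, $\langle b(t,x)-b(t,y),x-y\rangle\le-\kappa|x-y|^2$ for $|x-y|$ large. \Cref{ass}, item~4, constrains $b(t,x)$ only relative to $x$, so it does not give this. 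Position-dependent diffusion matrices also make reflection couplings much harder.

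The paper closes this step with \cref{lemma:bounded_KL}, which proves $\sup_{\bstep,k}\KL(\mu_0Q^\bstep_{0,t_k}|\pi)<\infty$ directly along the discrete chain.
\begin{itemize}
\item On each $(t_k,t_{k+1}]$ one differentiates $\KL(\hat\mu_t|\pi)$ via the Fokker--Planck equation of the frozen-coefficient interpolation.
\item The averaged drift is split into $B\nabla\Pot$ at the current point plus error terms.
\item The cross terms are absorbed by Young's inequality into half of the dissipation $\int|\nabla\log(\hat q_t/p)|^2_{B(t_k,\cdot)}\hat q_t\,\dd x$.
\item The errors are bounded uniformly by the moment bounds. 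The delicate one is the Gaussian score $\nabla\log\hat q(x,t|y,t_k)$, of size $(t-t_k)^{-1/2}$, times $B(t_k,y)-B(t,x)$, which is $O((t-t_k)^{1/2})$ in $L^2$ since $\holderB\ge 1/2$.
\item PLSI then gives $\frac{\dd}{\dd t}\KL(\hat\mu_t|\pi)\le-\frac{2}{\Cplsi^{\max}}\KL(\hat\mu_t|\pi)+2C_3$, hence $\sup_k\KL\le\max\{\KL(\mu_0|\pi),\Cplsi^{\max}C_3\}$.
\end{itemize}
Your worry is legitimate: this needs $\KL(\mu_0|\pi)<\infty$, which excludes Dirac initial laws. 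A completed regularization argument would remove that restriction, but as written yours is a plan, not a proof.

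Two smaller points. First, $\sup_{\bstep,k}\mu_0Q^\bstep_{0,t_k}(V_\alpha)<\infty$ does not follow from the escape-time estimates, which deteriorate like $e^{\alpha C_\alpha T}$. It comes from the geometric drift condition of \cref{lemma:Lyapunov_discrete}, which holds only for $\alpha<a_0$ and requires $\mu_0(V_\alpha)<\infty$ (\cref{lemma:moment_bounds}). Second, passing from PLSI to an ordinary LSI uses $B\preceq\maxB\id$, not $B\succeq\minB\id$.
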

\begin{proof}
    See~\cref{proof:thm_convergence}
\end{proof}

Finally, \Cref{thm:convergence} establishes the convergence of the proposed tamed Euler discretization in \cref{eq:tamed_euler} in the Wasserstein-2 distance, concluding the theoretical analysis.

\section{Numerical experiments}\label{sec:numerical}
In this section, we consider several numerical experiments\footnote{All the numerical experiments were conducted on an NVIDIA GeForce RTX 4090 GPU.} comparing various preconditioner choices and, in particular, demonstrate that time- and position-dependent preconditioning leads to advantageous convergence.
Concretely, we compare the following choices for $B(t,x)$:
\begin{itemize}
    \item \textbf{Constant scalar}: $B(t, x)= 1/\lip$, where $\lip$ is the Lipschitz constant of $\nabla\Pot$ or the maximum spectral norm $\|\nabla^2 \Pot \|_2$ observed in a representative domain.
    This effectively constitutes a worst-case estimate of the potential curvature and thus serves as a baseline method.
    Note that, choosing a discretization step size $h=1$, this method is equivalent to performing \gls{ula} with a step size of $1/L$.
    \item \textbf{Global covariance}: $B(t, x) \approx \cov_\pi[X] = \E_\pi[(X-\E_\pi[X])(X-\E_\pi[X])^\t]$, which we empirically estimate from samples from $\pi$.
    As the covariance of a random variable characterizes its spatial spread along different directions, we expect this preconditioning scheme to accelerate the mixing rate of the simulated chains, leading to faster global mode coverage.
    However, in most practical scenarios, it is hard to obtain a good estimate of the covariance.
    A strategy to overcome this limitation by local approximations is presented in~\cite{leimkuhler_ensemble_2018}.
    \item \textbf{Inverse expected Fisher information}: Analogously, we also estimate $B(t, x) \approx \E_{\pi}[\nabla^2 \Pot(X)]^{-1}$ from ground truth data.
    Opposed to the global covariance strategy, this preconditioning scheme describes the expected curvature of the potential we want to sample from.
    It is expected that the local behavior of the simulated chains will improve. An assumption also formalized in~\cite{titsias2023optimal}.
    However, as in the previous case, this strategy will break down when sampling from potentials with diverse local geometries.
    \item \textbf{Curvature-aware preconditioning}: Further, based on the potential Hessian $\nabla^2 \Pot$, we consider a position-dependent preconditioner of the form $B(t, x) = Q(x) \Lambda^{-1}(x) Q(x)^\top$, with $\Lambda(x) = \diag(|\lambda_1|_\varepsilon, \dots, |\lambda_d|_\varepsilon)$, and $\lambda_i$ the eigenvalues of $\nabla^2 \Pot(x)$ belonging to the eigenvectors in $Q(x)$.
    Moreover, $|\cdot|_\varepsilon = \max \{|\cdot|, \varepsilon\}$ with $\varepsilon > 0$ denotes an approximation of the absolute value clamped away from zero.
    Similar techniques have been applied for Hamiltonian Monte Carlo in~\cite{betancourt2013general}.
    We hypothesize that this preconditioning scheme excels when sampling from potentials with locally diverse geometry.
    However, approximating this preconditioner without access to $\nabla^2 \Pot$ remains an open problem~\cite{simsekli_stochastic_nodate}.
    Also, the position-dependence necessitates the incorporation of the correction term from \cref{eq:correction}, which in general is computationally demanding.
    \item \textbf{Global-local interpolation}: Finally, to combine the strengths of global and position-dependent preconditioning, we propose $B(t, x) = (1 - \lambda_t) \cov_\pi[X] + \lambda_t Q(x) \Lambda^{-1}(x) Q(x)^\top$ with $\lambda_t \in [0,1]$ monotonically increasing.
    This constitutes a time-dependent preconditioning that evolves from a global covariance preconditioner to a geometry-adaptive approximate Hessian preconditioner, combining global and local curvature information in an advantageous manner.
    The global-to-local nature of this preconditioner is motivated by the fact that inverse-Hessian preconditioning is likely more effective when samples are already close to a mode, which will be the case at later stages of the dynamics.
    Conversely, at the beginning of the sampling, we emphasize \emph{mode coverage} by utilizing the covariance.
    In practice, we set the schedule $\lambda_t = \min \{ \frac{2t}{Kh}, 1\}$, where $K$ is the total number of Langevin steps and $h$ denotes the (constant) discretization step size.
    
\end{itemize}

\subsection{Two-dimensional Rosenbrock potential}
\begin{figure}
    \centering
    \includegraphics[width=1.0\linewidth]{figures/rosenbrock/target.pdf}
    
    \caption{Two-dimensional Rosenbrock potential.
    The characteristic banana-shaped valley exhibits highly non-uniform curvature, posing a well-known challenge for gradient-based sampling methods.
    \emph{Left to right}: \emph{(i)} Ground truth samples obtained via ancestral sampling (\cf~\cref{appendix:details_rosenbrock}). \emph{(ii)} The potential energy landscape. \emph{(iii)} The gradient field of the potential. \emph{(iv)} The local curvature, with scaled Hessian eigenvectors overlaid at each point to illustrate the direction and magnitude of principal curvatures.
    This underlines the diverse local curvature encountered when sampling from the Rosenbrock potential.}
    \label{fig:rosenbrock}
\end{figure}
To showcase the advantage of spatiotemporal preconditioning, we start with sampling from the two-dimensional distribution induced by the Rosenbrock function, an ill-conditioned benchmark frequently encountered in numerical optimization~\cite{rosenbrock_automatic_1960}.
The potential takes on the form $\Pot(x_1, x_2) = (a - x_1)^2 + b (x_2 - x_1^2)^2$,
where we use the standard parameters $a=1$ and $b=100$ (see \cref{fig:rosenbrock} for details).
Note that this potential is only locally $L$-smooth on $\R^d$. Thus, for the constant, scalar preconditioning we estimate $L = \max_{x \in C} \|\nabla^2 \Pot(x)\|_2 \approx \num{11655}$ with $C = [-2, 4] \times [-1, 15]$ which captures 99\% of the mass.
Both the covariance and the expected Fisher information metric have been approximated via Monte Carlo estimation using ground truth samples which are available for the Rosenbrock potential.\footnote{see \cref{appendix:details_rosenbrock} for details}

To compare the proposed preconditioners, we simulate $N=\num{2e4}$ parallel chains with a fixed compute budget of $K=\num{1e4}$ Langevin steps.
The initial state of the trajectories is drawn from $Y_0 \sim \Nc(0, \id)$.
Further, we sweep log-linearly over discretization step sizes $h \in [\num{1e-4},1]$ to assess the trade-off between convergence speed and discretization bias for the different dynamics (see \cref{fig:rosenbrock-sweep} left).
Afterward, we select the step size $h = \num{6e-3}$ based on the given step budget, as in this regime most of the dynamics perform well.
We visualize the evolution of the Wasserstein-2 distance between the marginals for the compared dynamics in \cref{fig:rosenbrock-sweep} on the right.
The final samples produced by each of the considered preconditioned dynamics can be seen in \cref{fig:rosenbrock-final-samples}.

Additionally, we consider the expectations of different observables $\hat \mu_{t_k}(f)$ under the dynamics and compare their deviations from the ground-truth expectation over iterations.
Concretely, we consider $f(x) = x$ and $f(x) =f_{\gamma_1, \gamma_2}(x) = \cos(\gamma_1 x_1 + \gamma_2 x_2)$ for $(\gamma_1, \gamma_2) \in \{0,1,2\}^2$ (see \cref{fig:rosenbrock-stats}).
Finally, to obtain an impression of the temporal correlation structure between the samples produced by the compared dynamics at different times, we also report the \gls{acf} over different lags in \cref{fig:rosenbrock-acf}.

We observe faster convergence in the Wasserstein-2 metric and lower bias for \emph{Curvature} and \emph{Interpolated} throughout the experiment.
Furthermore, it becomes clear that time-dependent interpolation between global covariance and local Hessian-based preconditioning is beneficial even in the unimodal setting.
We hypothesize that this is due to the dynamics' better exploratory behavior at the beginning of the sampling process, followed by a switch to position-dependent step-size scaling.
\begin{figure}
    \centering
    \includegraphics[width=1.0\linewidth]{figures/rosenbrock/w2d_h_sweep.pdf}\\
    \includegraphics[width=1.0\linewidth]{figures/rosenbrock/w2d.pdf}
    \caption{Performance of the investigated dynamics on the Rosenbrock potential. \emph{Top}: Sweep over the discretization step size $h$ for a fixed budget of $K=\num{1e4}$ Langevin steps. It is evident that the dynamics induced by the curvature-aware and interpolated preconditioners offer the best trade-off between speed and discretization bias. 
    \emph{Bottom}: Evolution of the marginal $W_2$-distances over the Langevin iterations for the optimal step size. \emph{Curvature} and \emph{Interpolated} significantly are best.}
    \label{fig:rosenbrock-sweep}
\end{figure}

\subsection{Bayesian logistic regression}
\label{sec:bayes_log_ref}
We now evaluate the proposed method on a higher-dimensional Bayesian logistic regression problem \cite{choi2008bayesianlogreg}.
Consider a set of pairs \(\{x_i, y_i\}_{i=1}^n\) where \(x_1, \dots, x_n \in \mathbb{R}^d\) are feature vectors, and \(y_1, \dots, y_n \in \{0,1\}\) are binary response variables.
In the Bayesian logistic regression setting, we treat each \(y_i\) as a Bernoulli random variable with parameter \(\varphi(\beta^\top x_i)\), where \(\varphi(u)= 1 / (1 + \exp(-u))\) for \(u \in \mathbb{R}\) and \(\beta \in \mathbb{R}^d\) are the regression parameters. 
The quantity of interest is the posterior distribution \(p({\beta \mid \{x_i, y_i\}_{i=1}^n)} \propto p(\{x_i, y_i\}_{i=1}^n \mid \beta)) \cdot p(\beta)\) over the regression parameters. We consider an anisotropic Gaussian prior \((p(\beta) \propto \exp(-\frac{1}{2}\|\beta\|^2_{\Sigma^{-1}})\) where \(\Sigma = \text{diag}(\sigma_1^2,\dots,\sigma_d^2)\) with linear interpolated variances between \(\sigma_1^2 = 0.1\) and \(\sigma_d^2 = 10.0\). This renders the posterior ill-conditioned. Specifically, the posterior potential is given as 
\begin{equation}
    \label{eq:ble_posterior_potential}
    \Psi(\beta \mid \{x_i, y_i\}_{i=1}^n) =     \sum_{i=1}^N \log(1 + \exp(\beta^\top x_i)) - y_i \beta^\top x_i + \frac{1}{2}\beta^\top\Sigma^{-1}\beta + c.
\end{equation}

For sampling from the posterior we consider the discrete dynamics in \eqref{eq:tamed_euler} with the preconditioners introduced in \cref{sec:numerical}. The gradient of the potential in \eqref{eq:ble_posterior_potential} is given as
\begin{equation}
\label{eq:ble_grad_potential}
\nabla \Psi(\beta \mid \{x_i, y_i\}_{i=1}^n) = X^\top(\varphi(X\beta) - y) + \Sigma^{-1} \beta,
\end{equation}
where the \emph{design matrix} \(X \in \mathbb{R}^{N \times d}\) contains all feature vectors and \(\varphi\) acts element-wise. The potential fulfills \Cref{ass}, and the Lipschitz constant of $\nabla\Pot$ is upper bounded by \(\lambda_{\max}(X^\top X) + \max_i 1 / \sigma_i^2\).

We evaluate our method on the heart disease dataset (\(d=13\)) from the University of California, Irvine repository \cite{dua2017logregdata}. We remove observations with missing values resulting in \(N = 297\) samples. To compare the preconditioners, we compute (i) the squared norm of the difference between the Monte Carlo estimate of the mean and the true posterior mean, and (ii) the Wasserstein-2 distance between the one-dimensional marginals of the sample distribution and the posterior distributions. To this end, we estimate the ground-truth posterior mean by running \(5\times 10^5\) iterations of the \gls{mala} \cite{robertsExponentialConvergenceLangevin1996} with \(1\times 10^4\) parallel chains. Following \cite{durmus2018analysisoflangevin}, we choose the step size such that the acceptance ratio is approximately \(0.5\). Furthermore, we use the final \(1\times 10^4\) samples to calculate the reference posterior marginals and to estimate the covariance- and the inverse expected Fisher-information-matrix. Throughout this section, we use \(1\times 10^4\) parallel chains in the inference.
\begin{figure}
    \centering
    \includegraphics[width=0.49\linewidth]{figures/bayes_log_reg/mean_combined_step_size_delta.pdf}
    \includegraphics[width=0.49\linewidth]{figures/bayes_log_reg/mean_combined_step_size_randn.pdf}
    \caption{Performance of the investigated dynamics on the Bayesian logistic regression posterior under two initializations: \emph{left}\,\(\delta_1\); \emph{right}, \(\mathcal{N}(0,2 \id)\). Dashed lines correspond to $h=\num{5e-3}$ and solid lines to $h=\num{5e-4}$. The curvature-aware and interpolated preconditioners exhibit favorable convergence to the posterior mean across both step sizes and initializations
    compared to the global ones. This is especially pronounced for the Gaussian initialization (\emph{right}).} 
    \label{fig:blr_mean_convergence}
\end{figure}
\begin{figure}
    \centering
    \includegraphics[width=0.49\linewidth]{figures/bayes_log_reg/W2_combined_step_size_delta.pdf}
    \includegraphics[width=0.49\linewidth]{figures/bayes_log_reg/W2_combined_step_size_randn.pdf}
    \caption{Performance of the investigated dynamics on the Bayesian logistic regression posterior under two initializations: \emph{left}\,\(\delta_1\); \emph{right}, \(\mathcal{N}(0,2 \id)\) for $h=\num{5e-4}$. Dashed / solid lines correspond to marginals with the highest / lowest Wasserstein-2 distance, after $\num{1e4}$ iterations, for the respective method. The curvature-aware and interpolated preconditioners show better convergence for both cases in this setting.} 
    \label{fig:blr_w2_convergence}
\end{figure}

We report results for two different initializations in \Cref{fig:blr_mean_convergence} and \Cref{fig:blr_w2_convergence}.
In both cases, the curvature-aware and interpolated preconditioners exhibit faster convergence to the posterior mean (see \Cref{fig:blr_mean_convergence}).
For a step size (\(h=\num{5e-3}\)), all methods except the constant preconditioner converge in roughly $\num{2000}$ iterations, with global preconditioners being slightly slower than the time- and/or position-dependent ones.
The advantage of our method becomes apparent when reducing the step size (\ie, \(h=\num{5e-4}\)): preconditioners based on the covariance matrix and inverse Fisher information exhibit problematic behavior, with the distance increasing after an initial dip under Gaussian initialization.
In contrast, the interpolated preconditioner combines fast initial convergence from the global component with local curvature-awareness to avoid an increase in distance.
In \Cref{fig:blr_w2_convergence}, we show the best and worst marginal Wasserstein-2 distances for the respective methods and \(h=\num{5e-4}\).
For both initializations, the convergence of the best and worst marginal of the position- and/or time-dependent preconditioners is comparable to, or even better than, that of the global ones.
We refer to \Cref{appendix:add_results_blr} for additional results. 

\section{Conclusion}
In this work, we have introduced a time- and position-dependent preconditioning framework 
for Langevin-based sampling and established convergence guarantees under general conditions.
In particular, the proven convergence results are of interest beyond the proposed applications as they apply more broadly to Langevin sampling with locally Lipschitz continuous drifts and time- and position-dependent drifts and diffusions.
In numerical experiments, we have shown improved sampling performance with position- or position- and time-dependent preconditioning for Langevin sampling compared to constant preconditioning.

\subsection{Limitations and future research}\label{sec:limitations}
The main limitation of the proposed approach is the computation of the divergence $\divergence B$.
In future research, we would like to investigate the approximate computation of this term in greater detail.
Moreover, inspired by quasi-Newton methods in optimization, like L-BFGS~\cite{nocedal_numerical_2006}, we would like to explore different efficient approximate Hessian preconditioners such as diagonal or learned preconditioners.

\bibliographystyle{plainnat}
{\small
    \bibliography{references}
}

\appendix

\section{Postponed proofs}\label{sec:appendix_proofs}

In the following we provide a detailed convergence analysis of the proposed preconditioned Langevin algorithm. The analysis will be structured as follows: In~\cref{sec:appendix continuous} we prove well-posedness of the preconditioned \gls{sde} and exponential convergence to the target in $\KL$. Afterward in~\cref{sec:appendix discrete} we bound the discretization error of the tamed Euler-Maruyama discretization. 
For these proofs we will make use of several helper results, which we append afterward in~\cref{sec:appendix growth cont,sec:appendix growth discrete,sec:appendix auxiliary}.
In particular, in~\cref{sec:appendix growth cont,sec:appendix growth discrete} we provide some crucial results such as Lyapunov drift conditions and moment bounds for the continuous and discrete dynamics, respectively and in~\cref{sec:appendix auxiliary} we present several more elementary results about continuity and growth proerties of the involved functions.

\subsection{Analysis of the continuous-time dynamics}\label{sec:appendix continuous}

\begin{proposition}\label{prop:preconditioned-well-posed}
    The \gls{sde}~\eqref{eq:precond-langevin} admits a unique strong global solution. Moreover, the law of the solution denoted as $\mu_t = \law(X_t)$ satisfies the weak \gls{pde}
    \begin{equation}\label{eq:Fokker}
        \partial_t \mu_t = \divergence(B \nabla \Pot \mu_t + B\nabla \mu_t)
    \end{equation}
    and admits a density with respect to the Lebesgue measure, that is, $\dd \mu_t(x) = q_t(x)\dd x$ for a.e. $t$.
\end{proposition}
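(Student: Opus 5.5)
The proof has three parts: existence and uniqueness via localization plus a Lyapunov function, the Fokker--Planck equation via Itô's formula, and the density via nondegeneracy of the diffusion.

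\emph{Step 1: local strong solutions.} Under \cref{ass}, the drift $b(t,x) = -B(t,x)\nabla\Pot(x) + \divergence B(t,x)$ is locally Lipschitz in $x$, uniformly for $t$ in compacts. This follows from the local Lipschitz bounds on $B$ and $\divergence B$, the Lipschitz continuity of $\nabla\Pot$, and the bound $|\nabla\Pot(x)|\le \lip|x|$ coming from $\nabla\Pot(0)=0$.

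The diffusion coefficient $B^{1/2}(t,x)$ is also locally Lipschitz. Since $\minB\id\preceq B\preceq\maxB\id$, the matrix square root is Lipschitz on this set of matrices, e.g.\ with constant $1/(2\sqrt{\minB})$. Both coefficients are jointly continuous in $(t,x)$ by the Hölder-in-time assumption.

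Standard theory for locally Lipschitz coefficients then gives pathwise uniqueness and a unique strong solution up to the explosion time $\zeta=\lim_R\tau_R$, with $\tau_R$ the exit time of the ball of radius $R$. One way to see this is to truncate the coefficients outside the ball of radius $R$, apply the Itô existence theorem, and patch the solutions together.

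\emph{Step 2: non-explosion (the main obstacle).} I would use the Lyapunov function $V_\alpha(x)=\exp(\alpha(1+|x|^2)^{1/2})$, matching the paper's later use. Let $\mathcal L_t f = \langle b,\nabla f\rangle + \trace(B\nabla^2 f)$ be the generator. For $|x|$ large,
\[
\nabla V_\alpha = \alpha V_\alpha\, x/(1+|x|^2)^{1/2},
\]
so the first-order term is about $\alpha V_\alpha \langle x/|x|, b\rangle$. By \cref{ass}(4), this is at most $-c\,\alpha V_\alpha |b|$ with $|b|\to\infty$.

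The second-order term is bounded by $d\maxB(\alpha^2+\alpha)V_\alpha$. Hence $\mathcal L_t V_\alpha\le C_\alpha V_\alpha$ uniformly in $t$; in fact $\mathcal L_t V_\alpha$ is negative outside a large ball. The only delicate point is checking that the angle condition gives a uniform (in $t$) negative coefficient.

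Itô's formula stopped at $\tau_R$ then gives
\[
\E[V_\alpha(X_{t\wedge\tau_R})]\le e^{C_\alpha t}\,\mu(V_\alpha).
\]
This is finite for initial laws with $\mu(V_\alpha)<\infty$; for a general initial law, I would condition on $X_0$, or use $\log(1+|x|^2)$ as the Lyapunov function instead. It follows that
\[
\P[\tau_R\le t]\le e^{C_\alpha t}\,\mu(V_\alpha)/\inf_{|x|= R}V_\alpha \to 0 \quad\text{as } R\to\infty,
\]
so $\zeta=\infty$ almost surely and the solution is global.

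\emph{Step 3: Fokker--Planck equation.} For $\varphi\in C_c^\infty$, apply Itô's formula. The martingale term has bounded integrand, so its expectation vanishes, and
\[
\frac{\dd}{\dd t}\mu_t(\varphi)=\mu_t\big(\langle b,\nabla\varphi\rangle+\trace(B\nabla^2\varphi)\big).
\]
Writing $\sum_{ij}B_{ij}\partial_{ij}\varphi$ and integrating by parts formally against $\mu_t$, the $\divergence B$ part of the drift combines with the second-order term to give $\divergence(B\nabla\mu_t)$. The result is exactly the weak form of \eqref{eq:Fokker}, namely
\[
\frac{\dd}{\dd t}\mu_t(\varphi)=\mu_t\big(-\langle B\nabla\Pot,\nabla\varphi\rangle+\divergence(B\nabla\varphi)\big),
\]
which is an identity for test functions and needs no regularity of $\mu_t$.

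\emph{Step 4: existence of a density.} The operator is uniformly elliptic, since $B\succeq\minB\id$, with locally bounded, continuous coefficients. I would cite the Bogachev--Krylov--Röckner results on densities of solutions of Fokker--Planck--Kolmogorov equations: a locally bounded drift plus nondegenerate, locally Lipschitz diffusion yield absolute continuity of $\mu_t\,\dd t$, i.e.\ $\mu_t\ll$ Lebesgue for a.e.\ $t$.

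Alternatively, one can take a Malliavin/Hörmander-free route. On $\{t<\tau_R\}$, compare with a process having globally Lipschitz, uniformly elliptic coefficients, whose law has a Gaussian-type (Aronson) density. Then pass to $R\to\infty$ using Step 2.
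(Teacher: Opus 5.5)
Your proposal is correct and follows essentially the same route as the paper. The paper obtains the unique strong global solution by citing Mao's existence theorems for locally Lipschitz coefficients under a monotonicity/Lyapunov condition, which is what your Steps 1--2 carry out by hand with $V_\alpha$. It then derives the weak Fokker--Planck equation from It\^o's formula together with the identity $\divergence(B\nabla\phi)=\divergence B\cdot\nabla\phi+\trace(B\nabla^2\phi)$ for symmetric $B$, and gets the density from the Bogachev--Krylov--R\"ockner results for nondegenerate diffusions, exactly as in your Steps 3--4.
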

\begin{proof}
    Existence of a unique, strong, global solution follows from~\cite[Chapter 2, Theorems 3.5, 3.6]{mao2007stochastic}. 

    In order to establish the existence of a density let us derive the corresponding Fokker-Planck equation. 
    For any test function $\phi\in C^\infty_c((0,\infty)\times\R^d)$ applying \ito's lemma to $\phi(X_t)$ yields
    \begin{equation}
        \begin{aligned}
            \phi(t,X_t) =& \int_0^t \partial_t \phi(s,X_s) + \nabla\phi(s,X_s)\cdot [- B(s,X_s) \nabla \Pot (X_s) + \divergence B(s,X_s)] \\
            &+ \trace\bigg( B^{\t/2}(s,X_s)\nabla^2\phi(s,X_s)B^{1/2}(s,X_s)\bigg)\dd s
            + \int_0^t\nabla\phi(s,X_s)^\t B^{1/2}(s,X_s) \dd W_s.
        \end{aligned}
    \end{equation}
    Taking the expectation, letting $t\rightarrow\infty$, and using that for any $C,D\in\R^{d\times d}$ it holds $\trace(CDC)=\trace (CCD)$ we find
    \begin{equation}\label{eq:FP1}
        \begin{aligned}
            0
            =& \int_0^\infty \int_{\R^d} \partial_t \phi(s,x) + \nabla\phi(s,x)\cdot [- B(s,x) \nabla \Pot (x) + \divergence B(s,x)]\\
            &+ \trace\bigg( B(s,x)\nabla^2\phi(s,x)\bigg) \dd \mu_s(x)\dd s\\
            =& \int_0^\infty \int_{\R^d} \partial_t \phi(s,x) + \nabla\phi(s,x)\cdot [- B(s,x) \nabla \Pot (x) + \divergence B(s,x)]\\
            &+ \trace\bigg( B(s,x)\nabla^2\phi(s,x)\bigg) \dd \mu_s(x)\dd s
        \end{aligned}
    \end{equation}
    By~\cite[Corollary 6.3.2, Section 9.4]{bogachev2022fokker} and the assumptions on $B$, $\mu = q_t(x)\dd x\dd t$ for some strictly positive $q\in L^r_\loc((0,\infty)\times\R^d)$ for every $r\in[1,(d+2)')$. Moreover, note that
    \begin{equation}
        \divergence(B\nabla \phi) 
        = \sum_{i,j} \partial_{x_i} (B_{i,j}\partial_{x_j}\phi)
        = \sum_{i,j} \partial_{x_i}B_{i,j} \partial_{x_j}\phi + B_{i,j}\partial_{x_i}\partial_{x_j}\phi
        = \divergence(B)\cdot\nabla \phi + \trace(B\nabla^2 \phi)
    \end{equation}
    so that~\eqref{eq:FP1} can equivalently be written as
    \begin{equation}
        \begin{aligned}
            0
            =& \int_0^\infty \int_{\R^d} \partial_t \phi(s,x) - \nabla\phi(s,x)\cdot B(s,x) \nabla \Pot (x) + \divergence \big(B(s,x)\nabla\phi(s,x)\big) \dd \mu_s(x)\dd s,
        \end{aligned}
    \end{equation}
    that is, $\mu$ satisfies in the weak sense~\eqref{eq:Fokker}.
\end{proof}

\begin{lemma}
    Let $\nu$ satisfy \gls{plsi} and assume $\nu$ admits a strictly positive and continuously differentiable density with respect to the Lebesgue measure. Then it holds
    \begin{equation}
        \KL(\mu|\nu)\leq \frac{\Cplsi}{4} \int |\nabla\log\frac{\dd\mu}{\dd\nu}|_B^2\dd\mu.
    \end{equation}
    where the right-hand side is set to $\infty$ whenever $\frac{\dd \mu}{\dd\nu}$ is not strictly positive a.e. or not contained in $W^{1,2}(\R^d)$.
\end{lemma}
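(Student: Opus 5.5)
The plan is to apply \eqref{eq:LSI} to the test function $f=\sqrt{\rho}$ with $\rho=\frac{\dd\mu}{\dd\nu}$.

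First we dispose of the trivial cases. If $\mu$ is not absolutely continuous with respect to $\nu$, or $\rho$ fails to be strictly positive a.e., or $\rho\notin W^{1,2}$, then by convention the right-hand side equals $\infty$ and there is nothing to show. We may therefore assume that $\rho>0$ a.e. and that $\int|\nabla\log\rho|_B^2\dd\mu<\infty$.

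Next we compute both sides of the inequality for $f=\sqrt\rho$. Since $\int f^2\dd\nu=\int\rho\dd\nu=1$, the left-hand side of \eqref{eq:LSI} becomes
\[
\int\rho\log\rho\dd\nu=\KL(\mu|\nu).
\]
On the right-hand side, where $\rho>0$, we have $\nabla\sqrt\rho=\frac{\nabla\rho}{2\sqrt\rho}=\frac{\sqrt\rho}{2}\nabla\log\rho$. This gives $|\nabla f|_B^2=\frac{\rho}{4}|\nabla\log\rho|_B^2$, and hence
\[
\Cplsi\int|\nabla f|_B^2\dd\nu=\frac{\Cplsi}{4}\int|\nabla\log\rho|_B^2\dd\mu .
\]
Combining the two computations yields the claim.

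The main obstacle is that \eqref{eq:LSI} is only assumed for locally Lipschitz $f\in L^2(\nu)$, while $\sqrt\rho$ is merely a Sobolev function. We handle this by approximation. Consider the truncations $f_{\epsilon,M}=\sqrt{(\rho\vee\epsilon)\wedge M}$, and additionally mollify them, using the continuity and positivity of the density of $\nu$ to work locally. We then need two limits.

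For the gradient term, on $\{\epsilon<\rho<M\}$ the gradient coincides with $\nabla\sqrt\rho$ and vanishes elsewhere a.e. Together with the bound $B\preceq\maxB\id$, dominated convergence gives convergence of $\int|\nabla f_{\epsilon,M}|_B^2\dd\nu$ to $\int|\nabla\sqrt\rho|_B^2\dd\nu$.

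For the entropy term, $\int f_{\epsilon,M}^2\dd\nu\to1$, and the left-hand side converges by monotone convergence on $\{\rho\ge1\}$ and dominated convergence on $\{\rho<1\}$, using that $x\log x$ is bounded on $[0,1]$. If needed, we use lower semicontinuity, which suffices because we only require an upper bound on $\KL$.

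The mollification step is the one requiring care, since $\nu$ is not Lebesgue measure. I would use a local argument on balls, where the density of $\nu$ is bounded above and below, so that $W^{1,2}$ convergence with respect to Lebesgue measure transfers to $L^2(\nu)$ convergence. I would combine this with a cutoff at infinity, which costs $\int|\nabla\chi_R|^2f^2\dd\nu\to0$.
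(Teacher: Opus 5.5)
Your proposal is correct. You substitute $f=\sqrt{\dd\mu/\dd\nu}$ into \eqref{eq:LSI} and justify it through the truncation $\sqrt{(\rho\vee\epsilon)\wedge M}$, a cutoff $\chi_R$, and mollification of the compactly supported $\chi_R f_{\epsilon,M}$ on balls where the density of $\nu$ is bounded; removing the mollification first, then $R\to\infty$, then $\epsilon\to0$, $M\to\infty$, all limits go through as you describe. The paper gives no argument of its own and only cites \cite[Lemma B.7]{habring2026forward}, whose natural proof is this same substitution, so your sketch is essentially a self-contained version of the intended proof.
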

\begin{proof}
    See~\cite[Lemma B.7]{habring2026forward}.
\end{proof}
\subsubsection{Proof of \cref{thm:cont_time exp convergence}}\label{sec:appendix_proof_cont_ergodic}

\begin{proof}
    The proof is virtually identical to the one without preconditioning but provided for the sake of completeness. We obtain for the time derivative of the $\KL$ along the dynamics
    \begin{equation}
        \begin{aligned}
            \frac{\dd }{\dd t} \KL(\mu_t|\pi)
            = \frac{\dd}{\dd t}\int q_t \log\big(\frac{q_t}{p}\big)\dd x
            =& \int \log \frac{q_t}{p} \partial_t q_t\dd x\\
            =& \int \log\frac{q_t}{p}\divergence(B \nabla \Pot q_t + B\nabla q_t)\dd x\\
            =& -\int \nabla \log\frac{q_t}{p}\cdot B ( -\nabla \log p q_t + \nabla q_t)\dd x\\
            =& -\int \nabla \log\frac{q_t}{p}\cdot B ( -\nabla \log p q_t + \nabla q_t)\dd x\\
            =& -\int |\nabla \log\frac{q_t}{p}|^2_B q_t\dd x\\
            \leq& -\frac{4}{\Cplsi(t)}\KL(\mu_t|\pi)
        \end{aligned}
    \end{equation}
    which implies
    \begin{equation}
        \begin{aligned}
            \frac{\dd}{\dd t}&\bigg(\exp\big(\int_0^t\frac{4}{\Cplsi(s)}\dd s\big)\KL(\mu_t|\pi) \bigg)\\
            &=\exp\big(\int_0^t\frac{4}{\Cplsi(s)}\dd s\big)\big(\frac{4}{\Cplsi(t)}\KL(\mu_t|\pi) + \frac{\dd}{\dd t}\KL(\mu_t|\pi)\big)\leq 0.
        \end{aligned}
    \end{equation}
    Integrating over $t$ then yields the desired result.
\end{proof}
\begin{rmk}
    For a rigorous treatment of the the proof of~\cref{thm:cont_time exp convergence} using the weak formulation of the Fokker-Planck equation we refer to~\cite{habring2026forward}.
\end{rmk}

\subsection{Discretization analysis}\label{sec:appendix discrete}
The analysis of the discretization error of the proposed scheme is substantially more involved for two reasons. First, the fact that the diffusivity is time- and space-dependent leads to a discrepancy also in the discretization of the Brownian motion which prohibits the application of Girsanov's theorem as in~\cite{brosse2019tamed}. On the other hand-side, the loss of global Lipschitz continuity of the drift also renders classical discretization error bounds difficult. As a remedy, we will in the following provide discretization errors restricted to compact subsets of $\R^d$ combined with quantitative bounds on the escape times from these sets.

We begin with the definitions of the following stopping times:
\begin{definition}[Escape times]
    Let $X_t$ be a solution of~\eqref{eq:precond-langevin} and $(Y_{t_k})_k$ a discretization thereof according to~\cref{eq:tamed_euler}.
    For $R>0$ define we define the following stopping times
    \begin{equation}
        \tau_R = \inf\{t\geq 0\,|\, |X_t|\geq R\},\quad\text{and}\quad k_R = \inf\{k\in \N\,|\, |Y_{t_k}|\geq R\}.
    \end{equation}
\end{definition}

Note that, within the event 
\begin{equation}\label{eq:time_event}
    \{t\leq \tau_R\wedge k_R\}
\end{equation}
both the continuous- as well as the discrete-time dynamics are bounded so that the local Lipschitz continuity assumed in~\cref{ass} effectively reduces to a global Lipschitz continuity with Lipschitz modulus depending on $R$. 

In the following we will derive quantitative bounds on the probability of the events \emph{contrary to \eqref{eq:time_event}}
\begin{equation}
    \{\tau_R\geq T\},\quad \text{and } \{k_R\geq T\}
\end{equation}
for $T>0$.
\begin{lemma}[Growth bound on $\tau_R$]\label{lemma:escape_time_cont}
    For every $\alpha>0$ there exists $C_\alpha>0$ such that $(X_t)_t$ initialized at $X_0\sim \mu$ satisfies
    \begin{equation}
        \P[\tau_R\leq T]\leq \expo{-\alpha ((1+R^2)^{1/2} - C_\alpha T)} \mu(V_\alpha).
    \end{equation}
\end{lemma}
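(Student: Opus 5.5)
We will prove the bound via a Lyapunov argument with the exponential function $V_\alpha(x)=\exp\big(\alpha(1+|x|^2)^{1/2}\big)$ (the intended reading of the definition in the remark, with the factor $\alpha$ in the exponent). The generator of~\eqref{eq:precond-langevin} at time $t$ is $\Lc_t f = b(t,\cdot)\cdot\nabla f + \trace(B(t,\cdot)\nabla^2 f)$.

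The key step is to show the drift condition $\Lc_t V_\alpha(x)\leq \alpha C_\alpha V_\alpha(x)$ for all $t\geq 0$ and $x\in\R^d$. Writing $\phi(x)=(1+|x|^2)^{1/2}$, we have $\nabla\phi = x/\phi$ and $\nabla^2\phi = \id/\phi - xx^\t/\phi^3$, so that $|\nabla\phi|\leq 1$ and $\|\nabla^2\phi\|\leq 1/\phi\leq 1$. Hence
\begin{equation}
    \nabla V_\alpha = \alpha V_\alpha\nabla\phi,\qquad \nabla^2 V_\alpha = \alpha V_\alpha\big(\nabla^2\phi + \alpha\nabla\phi\nabla\phi^\t\big),
\end{equation}
and therefore
\begin{equation}
    \Lc_t V_\alpha = \alpha V_\alpha\Big(\frac{\langle b(t,x),x\rangle}{\phi(x)} + \trace(B\nabla^2\phi) + \alpha\,\nabla\phi^\t B\nabla\phi\Big)\leq \alpha V_\alpha\Big(\frac{\langle b(t,x),x\rangle}{\phi(x)} + d\maxB + \alpha\maxB\Big),
\end{equation}
using the bound $B\preceq\maxB\id$ from \cref{ass}.2.

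It remains to bound $\langle b,x\rangle/\phi$ from above uniformly in $t$. By \cref{ass}.4 there are $r_0$ and $c>0$ such that $\langle x,b(t,x)\rangle\leq -c|x||b(t,x)|\leq 0$ for $|x|\geq r_0$ and all $t$. On the ball $|x|\leq r_0$ we need $\sup_{t}|b(t,x)|<\infty$. This follows from $|B\nabla\Pot(x)|\leq \maxB \lip r_0$ together with a uniform-in-$t$ bound on $\divergence B$ on compacts. The time-Hölder condition in \cref{ass}.3 alone only gives growth like $|t|^\holderB$, so at this point I would either use the bounded-$\divergence B$ situation of the remark or read \cref{ass}.4 as implying local boundedness of $b$ uniformly in $t$. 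I expect this uniformity issue to be the main (technical) obstacle; the rest is routine. Granting it, we may set $C_\alpha = \sup_{|x|\leq r_0,t}|b(t,x)| + (d+\alpha)\maxB$.

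Finally, apply \ito's formula to $e^{-\alpha C_\alpha t}V_\alpha(X_t)$ stopped at $\tau_R\wedge T$. Because the process is stopped inside a ball, the stochastic integral is a true martingale, and the drift condition makes the stopped process a supermartingale. This yields
\begin{equation}
    \E\big[e^{-\alpha C_\alpha(\tau_R\wedge T)}V_\alpha(X_{\tau_R\wedge T})\big]\leq \mu(V_\alpha).
\end{equation}
On the event $\{\tau_R\leq T\}$, path continuity gives $|X_{\tau_R}|=R$, so the integrand is at least $e^{-\alpha C_\alpha T}e^{\alpha(1+R^2)^{1/2}}$. Markov's inequality then gives $\P[\tau_R\leq T]\leq \exp\big(-\alpha((1+R^2)^{1/2}-C_\alpha T)\big)\mu(V_\alpha)$, which is the claim.
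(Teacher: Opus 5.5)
Your proof is correct, but it takes a different route from the paper's. The paper applies Itô's formula to $\psi(x)=(1+|x|^2)^{1/2}$ itself and uses $\langle x,b(t,x)\rangle\le C$. This gives the pathwise bound $\psi(X_t)\le\psi(X_0)+C_\alpha t+V_t-\frac{\alpha}{2}\langle V\rangle_t$, where $V_t$ is the martingale part with $\langle V\rangle_t\le 2\maxB t$. It then controls $\sup_{t\le T}\big(V_t-\frac{\alpha}{2}\langle V\rangle_t\big)$ with the exponential martingale inequality of \cref{lemma:novikov} (Novikov's condition plus Doob's maximal inequality), for deterministic $X_0=x_0$, and integrates against $\mu$ at the end. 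You exponentiate first instead. The drift inequality $\Lc_t V_\alpha\le\alpha C_\alpha V_\alpha$ makes $e^{-\alpha C_\alpha t}V_\alpha(X_t)$ a supermartingale, and the stopping time $\tau_R\wedge T$ takes the place of the supremum, so optional stopping and Markov's inequality finish. Your version needs neither Novikov nor a maximal inequality, because stopping inside the ball makes the integrand bounded. It also treats a random initial law directly, and your drift inequality is a weak form of the drift condition of \cref{lemma:lyapunv_cont}. The paper's version gives pathwise control of $\sup_{t\le T}\psi(X_t)$ and mirrors the discrete-time argument of \cref{lemma:escape_time_discrete}, which relies on a discrete exponential martingale inequality.

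The step you flag as the main obstacle is needed in both proofs; the paper simply asserts $\langle x,b(t,x)\rangle\le C$ ``by \cref{ass}''. It does follow from the stated assumptions, so you need not strengthen them. The first line of item 3 of \cref{ass} says that $B$ itself, not only $\divergence B$, is locally Lipschitz in $x$ with a constant independent of $t$. Hence $|\partial_{x_j}B(t,x)|\le\lipB(1+2|x|^{\nB})$ for all $t$, and every entry of $\partial_{x_j}B(t,x)$ obeys the same bound. It follows that $|\divergence B(t,x)|\le d^{3/2}\lipB(1+2|x|^{\nB})$ uniformly in $t$. Together with $|B(t,x)\nabla\Pot(x)|\le\maxB\lip|x|$, this gives $\sup_{|x|\le r_0,\,t\ge0}|b(t,x)|<\infty$.

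One small correction: $|X_{\tau_R}|=R$ holds only when $|X_0|<R$. If $|X_0|\ge R$, then $\tau_R=0$ and $|X_{\tau_R}|=|X_0|\ge R$, which is all the lower bound on $\{\tau_R\le T\}$ requires.
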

\begin{proof}
    Consider $\psi(x) = (1+|x|^2)^{1/2}$. Since $|\nabla^2\psi|\leq 2$, via \ito's lemma we find
    \begin{equation}\label{eq_time_cont}
        \dd \psi(X_t) \leq \bigg\{ \left\langle\frac{X_t}{(1+|X_t|^2)^{1/2}},b(t,X_t)\right\rangle + 2d\maxB\bigg\}\dd t + \frac{\sqrt{2}X_t^\t}{(1+|X_t|^2)^{1/2}} B^{1/2}(t,X_t) \dd W_t.
    \end{equation}
    By~\cref{ass} it there exist $C>0$ such that
    \begin{equation}
        \langle x,b(t,x)\rangle\leq C.
    \end{equation}
    Inserting this into~\eqref{eq_time_cont} yields
    \begin{equation}
        \begin{aligned}
            \dd \psi(X_t) 
            \leq& \bigg\{\frac{C}{(1+|X_t|^2)^{1/2}} + 2d\maxB\bigg\}\dd t + \frac{\sqrt{2}X_t^\t}{(1+|X_t|^2)^{1/2}} B^{1/2}(t,X_t) \dd W_t.
        \end{aligned}
    \end{equation}
    Denoting $V_t = \int_0^t\frac{\sqrt{2}X_s^\t}{(1+|X_s|^2)^{1/2}} B^{1/2}(s,X_s) \dd W_s$ and noting that 
    \begin{equation}
        \langle V\rangle_t \leq 2\maxB t
    \end{equation}
    we obtain
    \begin{equation}
        \begin{aligned}
            \psi(X_t)
            \leq& \psi(X_0) + (C+\alpha\maxB)t + V_t - \frac{\alpha}{2}\langle V\rangle_t.
        \end{aligned}
    \end{equation}
    Since 
    \begin{equation}
        \left|\frac{\sqrt{2}X_s^\t}{(1+|X_s|^2)^{1/2}} B^{1/2}(s,X_s)\right|^2\leq 2\maxB
    \end{equation}
    is bounded by a constant, Novikov's condition is satisfied and we can apply~\cref{lemma:novikov} to the process $(V_t)_t$ yielding that
    \begin{equation}
        \begin{aligned}
            \P[\sup_{t\leq T}(1 + |X_t|^2)^{1/2}\geq R]
            \leq& \P[(1 + |x_0|^2)^{1/2} + (C+\alpha\maxB)T + \sup_{t\leq T}V_t - \frac{\alpha}{2}\langle V\rangle_t\geq R]\\
            \leq& \P[\sup_{t\leq T}V_t - \frac{\alpha}{2}\langle V\rangle_t\geq R - (1 + |x_0|^2)^{1/2} - (C+\alpha \maxB)T]\\
            \leq& \expo{-\alpha (R - (1 + |x_0|^2)^{1/2} - (C+\alpha \maxB)T)}\wedge 1\\
            \leq& \expo{-\alpha (R - (1 + |x_0|^2)^{1/2} - (C+\alpha \maxB)T)}.
        \end{aligned}
    \end{equation}
    Defining $C_\alpha = C+\alpha \maxB$ and integrating $x_0$ over $\mu$ yields the desired result as $\{\sup_{t\leq T}|X_t|\geq R\} = \{\sup_{t\leq T}(1+|X_t|^2)^{1/2}\geq (1+R^2)^{1/2}\} = \{\tau_R\leq T\}$.
\end{proof}
Using similar techniques, we obtain a bound on the discrete chain.
\begin{lemma}[Growth bound on $k_R$]\label{lemma:escape_time_discrete}
    Let $(Y_{t_k})_k$ be initialized at $Y_{0}\sim\mu$. For every $\alpha>0$ there exists a constant $C_\alpha >0$ such that for any $\step,T>0$ and $n=\lfloor T/\step \rfloor$ it holds
    \begin{equation}
        \P[k_R\leq n] 
        \leq \expo{-\alpha((1+R^2)^{1/2} - C_\alpha T)} \mu (V_\alpha)
    \end{equation}
\end{lemma}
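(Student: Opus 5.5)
We mirror the proof of \cref{lemma:escape_time_cont} with the Lyapunov function $\psi(x)=(1+|x|^2)^{1/2}$, replacing Itô's formula by a one-step expansion and the continuous supermartingale argument by a discrete exponential supermartingale. Write $\xi_k=\sqrt{2\step}B^{1/2}(t_k,Y_{t_k})Z_k$ and $\Delta_k=\step b_\step(t_k,Y_{t_k})+\xi_k$. The aim is to show that
\begin{equation*}
    M_k=\expo{\alpha\psi(Y_{t_k})-\alpha C_\alpha t_k}
\end{equation*}
is a supermartingale with respect to the natural filtration of the chain. Doob's maximal inequality then gives
\begin{equation*}
    \P\Big[\max_{k\leq n}\psi(Y_{t_k})\geq (1+R^2)^{1/2}\Big]\leq \expo{-\alpha((1+R^2)^{1/2}-C_\alpha t_n)}\,\E[M_0].
\end{equation*}
Since $t_n\leq T$ and $\E[M_0]=\mu(V_\alpha)$ (reading $V_\alpha=\expo{\alpha\psi}$), this is exactly the claim.

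\textbf{Step 1: a one-step bound on $\psi$.} Since $\psi$ is $1$-Lipschitz, convex, and has $\nabla^2\psi\preceq\id/\psi$,
\begin{equation*}
    \psi(Y_{t_{k+1}})\leq \psi(Y_{t_k})+\langle\nabla\psi(Y_{t_k}),\Delta_k\rangle+\tfrac12|\Delta_k|^2/\psi(Y_{t_k}).
\end{equation*}
Because $|\step b_\step|\leq 1$, the deterministic contributions are controlled by
\begin{equation*}
    \step\langle \nabla\psi(x),b_\step(t,x)\rangle+\frac{1}{\psi(x)}\Big(\step^2|b_\step|^2\Big)\leq \frac{\step}{1+\step|b|}\Big(\frac{\langle x,b\rangle}{\psi(x)}+\frac{\step|b|}{\psi(x)}\Big)+\dots
\end{equation*}
By \cref{ass}, for $|x|$ large the angle condition gives $\langle x,b\rangle\leq -c|x||b|$, which dominates the $\step|b|/\psi$ term. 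On compacts $b$ is bounded. Hence the drift part is at most $C\step$, uniformly in $t$ and in $\step\leq 1$. This is the same role played by $\langle x,b\rangle\leq C$ in the continuous case.

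\textbf{Step 2: the exponential moment of the noise.} Conditionally on $Y_{t_k}$, the linear noise term $\langle\nabla\psi,\xi_k\rangle$ is Gaussian with variance at most $2\step\maxB$. Its exponential moment is therefore at most $\expo{\alpha^2\step\maxB}$. The quadratic term $|\xi_k|^2/\psi$ is at most $|\xi_k|^2$, which has Gaussian tails with scale $\step\maxB$. For $\alpha\step\maxB$ small enough, $\E\expo{\alpha|\xi_k|^2}\leq\expo{C'\alpha d\maxB\step}$. The cross term $\langle\step b_\step,\xi_k\rangle$ is bounded by $|\xi_k|$, so it is handled the same way. Combining, via Cauchy--Schwarz or Hölder to separate the factors,
\begin{equation*}
    \E[\expo{\alpha\psi(Y_{t_{k+1}})}\mid Y_{t_k}]\leq \expo{\alpha\psi(Y_{t_k})+\alpha C_\alpha\step}
\end{equation*}
with $C_\alpha=C+c_1\alpha\maxB+c_2 d\maxB$. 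This is precisely the supermartingale property of $M_k$. Variable step sizes only replace $\step$ by $\step_k$ and $t_n$ by $\sum\step_\ell$.

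\textbf{Main obstacle.} The delicate point is Step 1's uniform control of the tamed drift combined with the quadratic Taylor remainder. Near the origin the remainder is not small relative to $\step$ unless $|\Delta_k|^2$ is $O(\step)$ in expectation. Fortunately the taming gives $|\step b_\step|^2\leq \step|b_\step|\leq\step|b|$ on compacts, so it is $O(\step)$ there. Far out, the remainder is damped by $1/\psi$ against a negative drift of order $\step|b|/(1+\step|b|)$.

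The exponential moment of $|\xi_k|^2$ restricts $\alpha\step$ to be bounded. For larger steps I would either absorb this into $C_\alpha$ (using $\step\leq1$) or, if that fails, replace $\psi$ in the second-order term by its Lipschitz bound, $\psi(y)\leq\psi(x)+\langle\nabla\psi(x),y-x\rangle+|y-x|^2\wedge 2|y-x|$. The linear-growth cap then makes the exponential moment finite for all $\alpha$.
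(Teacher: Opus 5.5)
Your argument is sound in substance and uses the same ingredients as the paper. These are $\psi(x)=(1+|x|^2)^{1/2}$, the one-step bound $\psi(y)\le\psi(x)+\langle x,y-x\rangle/\psi(x)+|y-x|^2/(2\psi(x))$, \cref{lemma:growth_tamed} to make the drift contribution $O(\step)$, and an exponential maximal inequality. The one-step bound follows from $\sqrt{a^2+s}\le a+s/(2a)$ applied to $\psi(y)^2=\psi(x)^2+2\langle x,y-x\rangle+|y-x|^2$, as in the paper. Your Hessian bound $\nabla^2\psi\preceq\id/\psi$ alone only gives $1/\psi$ at intermediate points, not at $x$.

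The difference is in the packaging. The paper writes $\psi(Y_{t_k})\le\psi(Y_0)+Ct_k+M_k$ with an additive martingale $M_k=\sum_\ell\xi_\ell$ and cites an exponential martingale inequality (Lamba et al.) with $\langle M\rangle_n\le Ct_n$. You instead verify directly that $\expo{\alpha\psi(Y_{t_k})-\alpha C_\alpha t_k}$ is a supermartingale and apply Doob. The cited inequality is proved by exactly your argument, but your version is more careful. The paper's increment $\xi_k$ contains the centred chi-square term $\step(|B^{1/2}Z_k|^2-\trace B)$. This term is not conditionally Gaussian, despite the paper's assertion, and its exponential moment strictly exceeds $\expo{\alpha^2\mathrm{Var}/2}$. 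It can even be infinite once $\alpha\step\maxB$ is of order one. Your Step 2 makes this restriction visible rather than hiding it.

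Two points need tightening.

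\textbf{The cross term.} Do not bound $\step\langle b_\step,\xi_k\rangle/\psi$ by $|\xi_k|$ in absolute value. Since $\E\expo{c\alpha|\xi_k|}=1+\Theta(\alpha\sqrt{\step})$, this would give a per-step factor $\expo{c\alpha\sqrt{\step}}$ and a total of order $\expo{c\alpha T/\sqrt{\step}}$. Keep it inside the centred Gaussian $\langle Y_{t_k}+\step b_\step,\xi_k\rangle/\psi(Y_{t_k})$, as the paper does. Its conditional variance is at most $4\step\maxB$ because $\step|b_\step|\le1$.

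\textbf{Large $\alpha\step$.} Your main line only covers $\alpha\step\maxB\le c$, while the lemma claims every $\alpha$ and every $\step$. Absorbing into $C_\alpha$ ``using $\step\le1$'' fails for large $\alpha$, since $\E\expo{c\alpha|\xi_k|^2}=\infty$ when $\alpha\step$ is large. Applying the cap $|y-x|^2\wedge2|y-x|$ to the whole increment $\Delta_k$ also fails, because it breaks the drift control. The drift part of $2|\Delta_k|$ is $2\step|b_\step|$, which the negative drift $\step\langle x,b_\step\rangle/\psi\ge-\step|b_\step|$ cannot absorb.

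Separate drift from noise instead. By $1$-Lipschitzness, $\psi(Y_{t_{k+1}})\le\psi(Y_{t_k}+\step b_\step)+|\xi_k|$, and $\psi(Y_{t_k}+\step b_\step)\le\psi(Y_{t_k})+C\step$ by Step 1. Step 1 in fact holds for every $\step>0$: since $\step^2|b_\step|^2\le\step|b_\step|$, the deterministic part is at most $\step|b_\step|(\tfrac12-\epsilon|x|)/\psi(x)$. This is nonpositive for large $|x|$ uniformly in $\step$, with $\epsilon$ from the angle condition.

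In the regime $\alpha\step\maxB>c$, use $\E\expo{\alpha|\xi_k|}\le2^d\expo{d\alpha^2\maxB\step}$. Then $2^d\le\expo{(d\log2)\,\alpha\maxB\step/c}$, which is valid because $\step$ is bounded below there. This yields a $C_\alpha$ independent of $\step$, as the statement requires.
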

\begin{proof}
    Recall that
    \begin{equation}
        \begin{aligned}
            Y_{t_{k+1}} 
            &= Y_{t_k} + \step_k \drift_{\step_k}(t_k,Y_{t_k}) + \sqrt{2{\step_k}}B^{1/2}(t_k,Y_{t_k})Z_k.
        \end{aligned}
    \end{equation}
    We find
    \begin{equation}\label{eq:discrete_stopping_time1}
        \begin{aligned}
            |Y_{t_{k+1}}|^2
            \leq& |Y_{t_k} + {\step_k} \drift_{\step_k}(t_k,Y_{t_k})|^2 \\
            &+ 2{\step_k} |B^{1/2}(t_k,Y_{t_k})Z_k|^2 + 2\langle Y_{t_k} + {\step_k} \drift_{\step_k}(t_k,Y_{t_k}), \sqrt{2{\step_k}}B^{1/2}(t_k,Y_{t_k})Z_k\rangle.\\
        \end{aligned}
    \end{equation}
    By~\cref{lemma:growth_tamed} we may pick $r$ such that for $|x|\geq r$ it holds for any $\step\leq \step_{\max}$
    \begin{equation}
        2 \bigg\langle \frac{x}{|x|},b_{\step}(t,x)\bigg\rangle + \frac{{\step}}{|x|}|b_{\step} (t,x)|^2\leq 0.
    \end{equation}
    which yields for $|x|\geq r$
    \begin{equation}
        |x + \step \drift_\step(t,x)|^2 = |x|^2 + |x|\step\big( 2 \langle \frac{x}{|x|}, \drift_\step(t_k,x)\rangle + \frac{\step}{|x|} |\drift_\step(t_k,x)|^2\big) \leq |x|^2.
    \end{equation}
    On the other hand-side, by~\cref{lemma_taming_bound}, there exists a constant $C_r$ independent of $\step$ such that for $|x|\leq r$,
    \[
        |x + \step \drift_\step(t,x)|^2 \leq |x|^2 + \step(2|x||\drift_\step(t,x)| + \step |\drift_\step(t,x)|^2)\leq |x|^2 + \step C_r
    \]
    so that we obtain altogether the bound
    \begin{equation}
        |x + \step \drift_\step(t,x)|^2\leq |x|^2 + \step C_r\1_{B_r(0)}(x)\leq |x|^2 + \step C_r.
    \end{equation}
    Inserting into~\eqref{eq:discrete_stopping_time1} yields
    \begin{equation}
        \begin{aligned}
            |Y_{t_{k+1}}|^2
            \leq& |Y_{t_k}|^2 +C_r{\step_k} \\
            &+ 2{\step_k} |B^{1/2}(t_k,Y_{t_k})Z_k|^2 + 2\langle Y_{t_k}+ {\step_k} \drift_{\step_k}(t_k,Y_{t_k}), \sqrt{2{\step_k}}B^{1/2}(t_k,Y_{t_k})Z_k\rangle.
        \end{aligned}
    \end{equation}
    Noting that by concavity of $s\mapsto (a+s)^{1/2}$, for every $a>0$ and $s>-a$ it holds $(a+s)^{1/2}\leq \sqrt{a} + \frac{s}{2\sqrt{a}}$, we find
    \begin{equation}\label{eq:discrete_stopping_time2}
        \begin{aligned}
            (1+|Y_{t_{k+1}}|^2)^{1/2}
            \leq& (1+|Y_{t_k}|^2)^{1/2} \\
            &+ \frac{C_r{\step_k}+ 2{\step_k} |B^{1/2}(t_k,Y_{t_k})Z_k|^2 + 2\langle Y_{t_k}+ {\step_k} \drift_{\step_k}(t_k,Y_{t_k}), \sqrt{2{\step_k}}B^{1/2}(t_k,Y_{t_k})Z_k\rangle}{2(1+|Y_{t_k}|^2)^{1/2}}\\
            \leq& (1+|Y_{t_k}|^2)^{1/2} + C_r/2{\step_k}+ {\step_k} |B^{1/2}(t_k,Y_{t_k})Z_k|^2 \\
            &+ \frac{\langle X_k+ {\step_k} \drift_{\step_k}(t_k,Y_{t_k}), \sqrt{2{\step_k}}B^{1/2}(t_k,Y_{t_k})Z_k\rangle}{(1+|Y_{t_k}|^2)^{1/2}}
        \end{aligned}
    \end{equation}
    Define now 
    \begin{equation}
        \xi_k = {\step_k} |B^{1/2}(t_k,Y_{t_k})Z_k|^2 - {\step_k}\trace(B(t_k,Y_{t_k})) + \frac{\langle Y_{t_k}+ {\step_k} \drift_{\step_k}(t_k,Y_{t_k}), \sqrt{2{\step_k}}B^{1/2}(t_k,Y_{t_k})Z_k\rangle}{(1+|Y_{t_k}|^2)^{1/2}}
    \end{equation}
    so that $\xi_k$ conditioned on $\Fc_{k-1}$ is a 1D Gaussian with mean zero. In particular, note that $M_n\coloneqq \sum_{k=0}^{n-1} \xi_k$ is a martingale and we can now write~\eqref{eq:discrete_stopping_time2} as
    \begin{equation}
        \begin{aligned}
            (1+|Y_{t_{k+1}}|^2)^{1/2}
            \leq& (1+|Y_{t_k}|^2)^{1/2} + C_r/2{\step_k} + {\step_k}\trace(B(t_k,Y_{t_k})) + \xi_k\\
            \leq& |Y_{t_k}|^2 + {\step_k}(C_r + d\maxB) + \xi_k.
        \end{aligned}
    \end{equation}
    Summing over $k$ then yields for any $\alpha>0$
    \begin{equation}
        \begin{aligned}
            (1+|Y_{t_k}|^2)^{1/2} - \frac{\alpha}{2} \langle M\rangle_k
            \leq& (1+|Y_0|^2)^{1/2} + \sum_{\ell = 0}^{k-1}\step_k(C_r + d\maxB) + \sum_{\ell=0}^{k-1}\xi_k - \frac{\alpha}{2} \langle M\rangle_k\\
            =& (1+|Y_0|^2)^{1/2} + t_{k}(C_r + d\maxB) + M_k - \frac{\alpha}{2} \langle M\rangle_k.
        \end{aligned}
    \end{equation}
    where 
    \begin{equation}
        \langle M\rangle_k = \sum_{\ell=0}^{k-1}\E[\xi_k^2|\Fc_{k-1}].
    \end{equation}
    Moreover, since we have for some constant $C>0$
    \begin{equation}
        \langle M\rangle_n\leq \sum_{k\leq n-1} C\step_k \leq Ct_n,
    \end{equation}
    we find for $\bstep,T>0$ arbitrary and $n$ such that $t_n\leq T$ with the notation $C_\alpha = C\alpha$
    \begin{equation}
        \begin{aligned}
            \sup_{k\leq n} (1+|Y_{t_k}|^2)^{1/2}
            \leq& \sup_{k\leq n} \bigg\{ (1+|Y_{t_k}|^2)^{1/2} - \frac{\alpha}{2}\langle M\rangle_k \bigg\} + C_\alpha T\\
            \leq& \sup_{k\leq n}\bigg\{(1+|Y_0|^2)^{1/2} + t_k(C_r + d\maxB) + M_k - \frac{\alpha}{2} \langle M\rangle_k\bigg\} + C_\alpha T\\
            \leq& \sup_{k\leq n} \big\{M_k - \frac{\alpha}{2} \langle M\rangle_k\big\} + (1+|Y_0|^2)^{1/2} + C_\alpha'T.
        \end{aligned}
    \end{equation}
    Assume again $Y_0 = x_0$ is deterministic. As shown in~\cite[Lemma A.1]{lamba2007adaptive} it holds true that
    \begin{equation}
        \P[\sup_{k\leq n} M_k - \frac{\alpha}{2} \langle M\rangle_k \geq \beta]\leq \expo{-\alpha\beta}.
    \end{equation}
    Thus, we can estimate
    \begin{equation}
        \begin{aligned}
            \P[\sup_{k\leq n}(1+|Y_{t_k}|^2)^{1/2}\geq R] 
            \leq& \P[\sup_{k\leq n} \big\{M_k - \frac{\alpha}{2} \langle M\rangle_k\big\} + (1+|x_0|^2)^{1/2} + C_\alpha' T\geq R]\\
            =& \P[\sup_{k\leq n} \big\{M_k - \frac{\alpha}{2} \langle M\rangle_k\big\} \geq R - (1+|x_0|^2)^{1/2} - C_\alpha' T] \\
            \leq& \expo{-\alpha(R - (1+|x_0|^2)^{1/2} - C'T)}
        \end{aligned}
    \end{equation}
    As previously, integrating with respect to $\mu(\dd x_0)$ yields the desired result.
\end{proof}

\subsubsection{Proof of \cref{lemma_discretization_error}}\label{sec:appendix_proof_discretization}

\begin{proof}
    We assume in the following for notational simplicity that $t=t_k$ for some $k\in\N$. The proof works analogously in the converse case.
    In the following we denote $\sigma_R = T\wedge t_{k_R}\wedge \tau_R$. We have
    \begin{equation}\label{eq_discretization1}
        \begin{aligned}
            \E[|X_t- Y_t|^2]
            &= \E[|X_t- Y_t|^2\1_{t\leq \sigma_R}] + \E[|X_t- Y_t|^2\1_{t>\sigma_R}]\\
            &\leq \E[|X_{t\wedge \sigma_R}- Y_{t\wedge \sigma_R}|^2] + \E[|X_t- Y_t|^2\1_{t>\sigma_R}].
        \end{aligned}
    \end{equation}
    We begin estimating the easier second term. Cauchy-Schwartz yields
    \begin{equation}\label{eq_discretization00}
        \E[|X_t- Y_t|^2\1_{t>\sigma_R}]\leq \E[|X_t- Y_t|^4]^{1/2}\E[\1_{t>\sigma_R}]^{1/2}.
    \end{equation}
    The first term is uniformly bounded by a constant, say $C_1$, for all $\bstep, T, t$ by the moment bounds~\cref{lemma:moment_bounds,cor:moment_bounds_cont}. For the second term we find
    \begin{equation}\label{eq_discretization0}
        \begin{aligned}
            \E[\1_{t\geq\sigma_R}]^{1/2} 
            &\leq \sqrt{\P[t\geq\tau_R] + \P[t\geq t_{k_R}]}
            \leq \sqrt{2\expo{-\alpha (R -C_\alpha T)} \mu(V_\alpha)}
        \end{aligned}
    \end{equation}
    where $C_\alpha$ is the greater (\ie, \emph{worse}) of the two constants appearing in \cref{lemma:escape_time_cont,lemma:escape_time_discrete}.
    Let us now tackle the first term in~\eqref{eq_discretization1}. Denoting $t_\ell \wedge t = t_\ell'$ and using Young's inequality and Cauchy-Schwartz multiple times we find
    \begin{equation}\label{eq_discretization2}
        \begin{aligned}
            \E[|X_{t\wedge \sigma_R}- Y_{t\wedge \sigma_R}|^2]
            =&\E\bigg[\bigg|\sum_{\ell \geq 0} \int_{t_\ell'\wedge \sigma_R}^{t_{\ell+1}'\wedge \sigma_R} \drift_\step(t_\ell, Y_{t_\ell}) - \drift(s,X_s)\dd s \\
            &\quad+ \sum_{\ell \geq 0} \sqrt{2}\int_{t_\ell'\wedge \sigma_R}^{t_{\ell+1}'\wedge \sigma_R} B^{1/2}(t_\ell, Y_{t_\ell}) - B^{1/2}(s,X_s)\dd W_s \bigg|^2 \bigg]\\
            \leq&\E\bigg[ 2t\sum_{\ell \geq 0} \int_{t_\ell'\wedge \sigma_R}^{t_{\ell+1}'\wedge \sigma_R} |\drift_\step(t_\ell, Y_{t_\ell}) - \drift(s,X_s)|^2\dd s \\
            &\quad+ 4\E \bigg[ \bigg|\sum_{\ell \geq 0} \int_{t_\ell'\wedge \sigma_R}^{t_{\ell+1}'\wedge \sigma_R} B^{1/2}(t_\ell, Y_{t_\ell}) - B^{1/2}(s,X_s)\dd W_s \bigg|^2 \bigg]\\
        \end{aligned}
    \end{equation}
    We first tackle the non-stochastic integral. Using~\cref{lemma_approx_taming} and the local Lipschitz assumption in~\cref{ass} we find
    \begin{equation}
        \begin{aligned}
            |\drift_\step(t_\ell, Y_{t_\ell}) &- \drift(s,X_s)|^2
            \leq \big(|\drift_{\step_\ell}(t_\ell, Y_{t_\ell}) - \drift(t_\ell, Y_{t_\ell})|
            + |\drift(t_\ell, Y_{t_\ell}) - \drift(s,X_s)|\big)^2\\
            \leq & \big({\step_\ell} \tamingconst (1+|Y_{t_\ell}|^{\nB+1}) \\
            &+ \lipdrift (1+|Y_{t_\ell}|^{\nB+1} + |X_s|^{\nB+1})(|Y_{t_\ell}-X_s| + |t_\ell-s|^\holderB)\big)^2\\
            \leq & \big({\step_\ell} \tamingconst (1+|Y_{t_\ell}|^{\nB+1}) \\
            &+ \lipdrift (1+|Y_{t_\ell}|^{\nB+1} + |X_s|^{\nB+1})(|Y_{t_\ell}-Y_s| + |Y_s - X_s| + |\step_{\ell}|^\holderB)\big)^2\\
            \leq & 2\step_\ell^2 \tamingconst^2 (1+|Y_{t_\ell}|^{\nB+1})^2 \\
            &+ 6\lipdrift^2 (1+|Y_{t_\ell}|^{\nB+1} + |X_s|^{\nB+1})^2(|Y_{t_\ell}-Y_s|^2 + |Y_s - X_s|^2 + |\step_{\ell}|^{2\holderB})\\
        \end{aligned}
    \end{equation}
    where we used that $(\sum_{i=1}^N a_i)^2\leq N\sum_{i=1}^N a_i^2$ for the last inequality.
    Note that 
    \begin{equation}
        \begin{aligned}
            Y_{s} - Y_{t_\ell}
            = (s-t_\ell)\drift_{\step_\ell}(t_\ell,Y_{t_\ell}) + \sqrt{2}B^{1/2}(t_\ell,Y_{t_\ell}) (W_s-W_{t_\ell})
        \end{aligned}
    \end{equation}
    implying with~\cref{lemma_taming_bound} and independence of Brownian increments
    \begin{equation}\label{eq:discretization_proof1}
        \begin{aligned}
            \E[|Y_{s} - Y_{t_\ell}|^2]
            \leq (s-t_\ell)^2 (1+\step_\ell)\tamingconst (1+\E[|Y_{t_\ell}|^{\nB+1}]) + 2 d \maxB (s-t_\ell)
        \end{aligned}
    \end{equation}
    Thus, we find
    \begin{equation}\label{eq:discretization_proof2}
        \begin{aligned}
            \E\bigg[\int_{t_\ell'\wedge \sigma_R}^{t_{\ell+1}'\wedge \sigma_R} &|\drift_{\step_\ell}(t_\ell, Y_{t_\ell}) - \drift(s,X_s)|^2\dd s\bigg]\\
            \leq& \E\bigg[\int_{t_\ell'}^{t_{\ell+1}'} 2\step_\ell^2 \tamingconst^2 (1+|Y_{t_\ell}|^{\nB+1})^2 \\
            &+ 6\lipdrift^2 (1+2R^{\nB+1})^2(|Y_{t_\ell}-Y_s|^2 + |\step_{\ell}|^{2\holderB})\dd s\\
            & + \int_{t_\ell'}^{t_{\ell+1}'}6\lipdrift^2 (1+2R^{\nB+1})^2|Y_{s\wedge \sigma_R}- X_{s\wedge \sigma_R}|^2\dd s\bigg].
        \end{aligned}
    \end{equation}
    Using~\eqref{eq:discretization_proof1} and the fact that by~\cref{ass}, $2\holderB\geq 1$ it follows
    \begin{equation}\label{eq:discretization_proof3}
        \begin{aligned}
            \E\bigg[\int_{t_\ell\wedge \sigma_R}^{t_{\ell+1}\wedge \sigma_R} |\drift_{\step_\ell}(t_\ell, Y_{t_\ell}) - &\drift(s,X_s)|^2\dd s\bigg]\\
            \leq& C_2(R) \step_{\ell}^2 + C_3(R) \int_{t_\ell'}^{t_{\ell+1}'} \E[|Y_{s\wedge \sigma_R}- X_{s\wedge \sigma_R}|^2]\dd s.
        \end{aligned}
    \end{equation}
    For the stochastic integral in~\eqref{eq_discretization2} let us briefly introduce the notation $\bar B(s,x) = B(t_\ell,x_{t_\ell})$ for $s\in [t_\ell, t_{\ell+1})$. Using \ito's isometry we obtain
    \begin{equation}
        \begin{aligned}
            \E \bigg[ \bigg|\sum_{\ell \geq 0} &\int_{t_\ell'\wedge \sigma_R}^{t_{\ell+1}'\wedge \sigma_R} B^{1/2}(t_\ell, Y_{t_\ell}) - B^{1/2}(s,X_s)\dd W_s \bigg|^2 \bigg]\\
            =&\E \bigg[ \bigg| \int_{0}^{t\wedge \sigma_R} \bar B^{1/2}(s, Y) - B^{1/2}(s,X_s)\dd W_s \bigg|^2 \bigg]\\
            =&\E \bigg[ \bigg| \int_{0}^{t} \1_{s\leq \sigma_R}(\bar B^{1/2}(s, Y) - B^{1/2}(s,X_s))\dd W_s \bigg|^2 \bigg]\\
            =&\E \bigg[ \int_{0}^{t} \1_{s\leq \sigma_R}\big|\bar B^{1/2}(s, Y) - B^{1/2}(s,X_s)\big|^2\dd s \bigg]\\
            =&\E \bigg[\sum_{\ell \geq 0} \int_{t_\ell'}^{t_{\ell+1}'} \1_{s\leq \sigma_R}\big|B^{1/2}(t_\ell, Y_{t_\ell}) - B^{1/2}(s,X_s)\big|^2\dd s \bigg].
        \end{aligned}
    \end{equation}
    For the latter in turn we find again due to the local Lipschitz continuity of $B^{1/2}$
    \begin{equation}
        \begin{aligned}
            \E \bigg[\int_{t_\ell'}^{t_{\ell+1}'} &\1_{s\leq \sigma_R}\big|B^{1/2}(t_\ell, Y_{t_\ell}) - B^{1/2}(s,X_s)\big|^2\dd s \bigg]\\
            \leq& \int_{t_\ell'}^{t_{\ell+1}'} \frac{1}{4\minB}\E[\1_{s\leq \sigma_R}\lipB^2 (1+|Y_{t_\ell}|^{\nB+1} + |X_s|^{\nB+1})^2(|Y_{t_\ell}-Y_s| + |Y_s - X_s| + |\step_{\ell}|^\holderB)^2]\dd s\\
            \leq& C_4(R) \step_{\ell}^2 + C_5(R) \int_{t_\ell'}^{t_{\ell+1}'} \E[|Y_{s\wedge \sigma_R}- X_{s\wedge \sigma_R}|^2]\dd s
        \end{aligned}
    \end{equation}
    where the last inequality follows using the same techniques as in~\eqref{eq:discretization_proof3}.
    Inserting the estimates into~\eqref{eq_discretization2} yields
    \begin{equation}
        \begin{aligned}
            \E[|X_{t\wedge \sigma_R}- Y_{t\wedge \sigma_R}|^2]
            \leq T C_6(R)\sum_{\ell\geq 0}\1_{t_\ell\leq t}\step_\ell^2 + TC_7(R)\int_{0}^{t} \E[|Y_{s\wedge \sigma_R}- X_{s\wedge \sigma_R}|^2]\dd s
        \end{aligned}
    \end{equation}
    and Grönwall's lemma gives us
    \begin{equation}\label{eq_discretization4}
        \begin{aligned}
            \E[|X_{t\wedge \sigma_R}- Y_{t\wedge \sigma_R}|^2]
            \leq \expo{TC_7(R)t}T C_6(R)\sum_{\ell\geq 0}\1_{t_\ell\leq t}\step_\ell^2 .
        \end{aligned}
    \end{equation}
    Combining~\eqref{eq_discretization00},~\eqref{eq_discretization0}, and~\eqref{eq_discretization4} we obtain
    \begin{equation}
        \E[|X_{t}- Y_{t}|^2]\leq C_1\sqrt{2\expo{-\alpha (R -C_\alpha T)} \mu(V_\alpha)} + \expo{TC_7(R)t}T C_6(R)\sum_{\ell\geq 0}\1_{t_\ell\leq t}\step_\ell^2
    \end{equation}
    concluding the proof up to relabeling of the constants.
\end{proof}

\subsubsection{Proof of~\cref{thm:convergence}}\label{proof:thm_convergence}

\begin{proof}
    We separate the error into one component from the discretization and one from exponential convergence of the continuous dynamics
    \begin{equation}
        W_2(\mu_0 Q^\bstep_{0,t_k},\pi)\leq W_2(\mu_0Q^\bstep_{0,t_{k_1}}Q^\bstep_{t_{k_1},t_k},\mu_0Q^\bstep_{0,t_{k_1}}P_{t_{k_1},t_{k}}) + W_2(\mu_0Q^\bstep_{0,t_{k_1}}P_{t_{k_1},t_{k}},\pi).
    \end{equation}
    By~\cite[Theorem 1]{otto2000generalization} \gls{lsi} implies Talagrand's inequality with the same constant, that is, for any $\mu\in\Pc_2(\R^d)$ we have
    \begin{equation}
        W_2^2(\mu,\pi)
        \leq \Clsi \KL(\mu|\pi).
    \end{equation}
    Using~\cref{thm:cont_time exp convergence} we find
    \begin{equation}\label{eq:convergence_thm1}
        \begin{aligned}
            W_2(\mu_0Q^\bstep_{0,t_{k_1}}P_{t_{k_1},t_{k}},\pi)
        \leq& \Clsi \KL(\mu_0Q^\bstep_{0,t_{k_1}}P_{t_{k_1},t_{k}}|\pi)\\
        \leq& \Clsi \exp\bigg(-\int_{t_{k_1}}^{t_k}\frac{4}{\Cplsi(s)}\dd s\bigg)\KL(\mu_0Q^\bstep_{0,t_{k_1}}|\pi)\\
        \leq& C\Clsi \exp\bigg(-\int_{t_{k_1}}^{t_k}\frac{4}{\Cplsi(s)}\dd s\bigg)
        \end{aligned}
    \end{equation}
    where used in the last inequality that $\KL(\mu_0Q^\bstep_{0,t_{k_1}}|\pi)$ is bounded uniformly with respect to $k_1,\bstep$ due to~\cref{lemma:bounded_KL}. Moreover, by~\cref{lemma_discretization_error} we have with $T=t_k-t_{k_1}$
    \begin{equation}
        \begin{aligned}
            W_2(&\mu_0Q^\bstep_{0,t_{k_1}}Q^\bstep_{t_{k_1},t_k},\mu_0Q^\bstep_{0,t_{k_1}}P_{t_{k_1},t_{k}})\\
            \leq& C_1\sqrt{2\expo{-\alpha (R -C_\alpha T)} \mu_0Q^\bstep_{0,t_{k_1}}(V_\alpha)} + \expo{TC_7(R)T}T C_6(R)\sum_{\ell\geq 0}\1_{t_{k_1}\leq t_\ell\leq t_{k-1} }\step_\ell^2.
        \end{aligned}
    \end{equation}
    Now let $\epsilon>0$ be arbitrary. Choose $\alpha<a_0$ with $a_0$ from~\cref{lemma:Lyapunov_discrete} and $T>0$ large enough so that 
    \begin{equation}
        C\Clsi \exp\bigg(-\int_{t}^{t + T}\frac{4}{\Cplsi(s)}\dd s\bigg)< \frac{\epsilon}{3}
    \end{equation}
    for all $t>0$.
    Afterward choose $R>0$ large enough so that
    \begin{equation}
        C_1\sqrt{2\expo{-\alpha (R -C_\alpha (T+1))} \mu_0Q^\bstep_{0,t_{k_1}}(V_\alpha)}< \frac{\epsilon}{3}
    \end{equation}
    which is possible as $\mu_0Q^\bstep_{0,t_{k_1}}(V_\alpha)$ is bounded by~\cref{lemma:moment_bounds}. Finally, by square summability, for fixed $T,R>0$ we can find $k_0$ such that for $k_1\geq k_0$
    \begin{equation}\label{eq_convergence_thm2}
        \expo{(T+1)C_7(R)(T+1)}(T+1) C_6(R)\sum_{\ell\geq 0}\1_{t_{k_1}\leq t_\ell\leq t_{k-1} }\step_\ell^2<\frac{\epsilon}{3}.
    \end{equation}
    It follows $W_2(\mu_0 Q^\bstep_{0,t_k},\pi)<\epsilon$ as soon as
    \begin{equation}
        k\geq \min\{\ell\geq k_0\,|\,t_\ell - t_{k_0}\geq T\}.
    \end{equation} 
    Indeed, for such $k$ we can define 
    \begin{equation}
        k_1 = \max\{\ell\leq k\,|\, t_k-t_\ell\geq T\}
    \end{equation}
    so that $k_1\geq k_0$ and $T\leq t_k-t_{k_1}\leq T+1$ by the fact that $\step_\ell\leq 1$. By the above choices of $T$, $R$, and $k_0$ the error bound of $\epsilon$ follows.
    In the constant step-size setting, the sum in~\eqref{eq_convergence_thm2} reduced to 
    \begin{equation}
        \sum_{\ell\geq 0}\1_{t_{k_1}\leq t_\ell\leq t_{k-1} }\step_\ell^2 = (t_k-t_{k_1})\step.
    \end{equation}
    Thus, choosing the step size as
    \begin{equation}
        \step\leq \frac{\epsilon}{2\expo{(T+1)C_7(R)(T+1)}(T+1)^2 C_6(R)}
    \end{equation}
    yields $\epsilon$ error $W_2(\mu_0 Q^\bstep_{0,t_k},\pi)\leq \epsilon$ whenever $k\geq \lceil \frac{T}{\step}\rceil$.
\end{proof}

\subsection{Drift condition and moment bounds for the continuous dynamics}\label{sec:appendix growth cont}
Based on the Fokker-Planck equation we define the following time-dependent generator of the diffusion 
\begin{equation}
    \Lc_t(V) = -\langle\nabla V, B\nabla\Pot\rangle + \divergence(B \nabla V).
\end{equation}
We can derive the following drift condition for $\Lc_t$.
\begin{lemma}\label{lemma:lyapunv_cont}
    Let $V_a(x) = \expo{a(1+|x|^2)^{1/2}}$. For any $a>0$, there exist $\contlyapa,\contlyapb>0$ such that for all $t\geq 0$
    \begin{equation}
        \Lc_t(V_a)(x)\leq -\contlyapa V_a(x) + \contlyapb.
    \end{equation}
\end{lemma}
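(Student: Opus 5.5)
Writing $\psi(x)=(1+|x|^2)^{1/2}$ and $V_a=e^{a\psi}$, the plan is to compute $\Lc_t(V_a)$ explicitly and show that, outside a large ball, the drift part dominates and is negative with magnitude growing like $|b(t,x)|$. Inside the ball, the expression is bounded by continuity. First expand the generator using the identity from the proof of \cref{prop:preconditioned-well-posed}, $\divergence(B\nabla V)=\divergence B\cdot\nabla V+\trace(B\nabla^2V)$. This gives
\begin{equation*}
    \Lc_t(V_a)(x) = \langle \nabla V_a(x), b(t,x)\rangle + \trace\big(B(t,x)\nabla^2 V_a(x)\big).
\end{equation*}
Since $\nabla V_a = aV_a\nabla\psi$ with $\nabla\psi = x/\psi$, and $\nabla^2V_a = aV_a(\nabla^2\psi + a\nabla\psi\nabla\psi^\t)$, we obtain
\begin{equation*}
    \Lc_t(V_a)(x) = aV_a(x)\Big[\Big\langle \frac{x}{\psi(x)}, b(t,x)\Big\rangle + \trace\big(B\nabla^2\psi\big) + a\,\nabla\psi^\t B\nabla\psi\Big].
\end{equation*}
Because $|\nabla\psi|\leq1$, $|\nabla^2\psi|\leq 2$ (as used in \cref{lemma:escape_time_cont}) and $B\preceq\maxB\id$, the last two terms are bounded by $2d\maxB + a\maxB$, uniformly in $t$ and $x$.

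Next, control the first term for large $|x|$ using the last item of \cref{ass}. There exist $r_0$ and $\kappa>0$ such that for $|x|\geq r_0$ and all $t\geq0$ we have $\langle x/|x|, b(t,x)\rangle\leq -\kappa|b(t,x)|$. Moreover, $|b(t,x)|\to\infty$ uniformly in $t$ as $|x|\to\infty$. Since $|x|/\psi(x)\geq 1/\sqrt2$ for $|x|\geq1$, we get for $|x|\geq r_0\vee1$
\begin{equation*}
    \Big\langle \frac{x}{\psi(x)}, b(t,x)\Big\rangle \leq -\frac{\kappa}{\sqrt2}|b(t,x)|.
\end{equation*}
Choose $r\geq r_0\vee 1$ such that $\frac{\kappa}{\sqrt2}|b(t,x)|\geq 2d\maxB + a\maxB + 1$ for all $|x|\geq r$ and $t\geq0$. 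Then $\Lc_t(V_a)(x)\leq -aV_a(x)$ on $\{|x|\geq r\}$, so $\contlyapa = a$ works there.

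On the compact ball $\{|x|\leq r\}$ we need a bound on $\Lc_t(V_a)$ that is uniform in $t$. This is the only delicate point, since $b$ is merely locally Lipschitz and $t$ ranges over $[0,\infty)$. I would use \cref{ass}: $|B(t,x)\nabla\Pot(x)|\leq \maxB\lip|x|$, and $\divergence B(t,\cdot)$ is locally Lipschitz with constants independent of $t$. The problem is that \cref{ass} only controls differences of $\divergence B$, not its value at one point uniformly in $t$. If no uniform bound on $|\divergence B(t,0)|$ is available, I would argue via the sup-condition. The uniform-in-$t$ limits in the last item of \cref{ass} already force boundedness of $b$ at some fixed large radius, and the spatial local Lipschitz bound then propagates this to the whole ball. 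Alternatively, I would add $\sup_t|\divergence B(t,0)|<\infty$ as an implicit assumption, since the paper's remark treats $\divergence B$ as bounded anyway. With $\sup_{t,|x|\leq r}|b(t,x)|=:K_r<\infty$, on the ball we get $\Lc_t(V_a)+aV_a\leq aV_a(r)\,(K_r + 2d\maxB + a\maxB + a)=:\contlyapb$. Combining the two regions gives $\Lc_t(V_a)\leq -aV_a+\contlyapb$ for all $x$ and $t$, which proves the claim.
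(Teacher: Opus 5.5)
Your approach matches the paper's. You expand $\Lc_t V_a=\langle\nabla V_a,b\rangle+\trace(B\nabla^2V_a)$, bound the second-order part by a constant times $V_a$, and use the angle and growth conditions of the last item of \cref{ass} to make the drift term dominate outside a ball. Your explicit split into $\{|x|\geq r\}$ and $\{|x|\leq r\}$ is a cleaner version of the paper's final step, and the exterior estimate is correct.

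The problem is the interior bound $K_r=\sup_{t\geq0,\,|x|\leq r}|b(t,x)|<\infty$, which you rightly flag. Your first justification for it does not work. The last item of \cref{ass} gives only a lower bound on $|b|$ and a directional condition, so it says nothing about upper bounds at any radius. For instance, $b(t,x)=-(1+t)x$ satisfies both limits, yet $\sup_t|b(t,x)|=\infty$ for every $x\neq0$. Your fallback, assuming $\sup_t|\divergence B(t,0)|<\infty$, would close the argument, but it adds a hypothesis that is not needed.

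The missing observation is that the spatial Lipschitz bound in \cref{ass} is imposed on $B$ itself, not only on $\divergence B$, with a constant independent of $t$. Since $B(t,\cdot)$ is differentiable, divide $|B(t,x+se_j)-B(t,x)|\leq\lipB(1+|x+se_j|^{\nB}+|x|^{\nB})|s|$ by $|s|$ and let $s\to0$. This gives $|\partial_{x_j}B(t,x)|\leq\lipB(1+2|x|^{\nB})$, hence $|\divergence B(t,x)|\leq C_d\lipB(1+2|x|^{\nB})$ for all $t$, with $C_d$ depending only on $d$. Combined with $|B(t,x)\nabla\Pot(x)|\leq\maxB\lip|x|$, this yields $K_r<\infty$.

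The paper's proof uses the same fact without stating it. It does so when it asserts $\langle x,b(t,x)\rangle\leq-\kappa|x||b(t,x)|+C$ on $B_r(0)$ with $C$ independent of $t$, and again in \cref{lemma_approx_taming}.

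A minor slip: on the ball, $\Lc_tV_a+aV_a=aV_a[\,\cdots+1\,]$, so the last summand in your $\contlyapb$ should be $1$, not $a$.
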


\begin{proof}
    We compute
    \begin{equation}\label{eq:cont_lyapunov1}
        \begin{aligned}
            \Lc_t(V_a)(x) 
            =& \langle\nabla V_a(x), -B(t,x)\nabla\Pot(x) \rangle + \divergence(B\nabla V_a(x))\\
            =& \langle \nabla V_a(x), -B(t,x)\nabla\Pot(x)\rangle + \langle \divergence B(t,x), \nabla V_a(x)\rangle + \trace(B(t,x)\nabla^2 V_a(x))\\
            =& \langle \nabla V_a(x), b(t,x)\rangle + \trace(B(t,x)\nabla^2 V_a(x))\\
            =& \frac{aV_a(x)}{(1+|x|^2)^{1/2}} \langle x, b(t,x)\rangle + \trace(B(t,x)\nabla^2 V_a(x))\\
        \end{aligned}
    \end{equation}
    It holds $\trace(B\nabla^2 V_a)\leq \beta |\nabla^2V_a(x)|_{S^1}$ with $|\cdot|_{S^1}$ the Schatten-$1$ norm. We can compute
    \begin{equation}
        \nabla^2 V_a(x) 
        = aV_a (x) \left( \frac{axx^\t}{1+|x|^2} + \frac{(1+|x|^2)^{1/2}\id - x \frac{x^\t}{(1+|x|^2)^{1/2}}}{1+|x|^2}\right).
    \end{equation}
    It holds true that $\nabla^2 V(x)\succeq 0$ and we find for any $z\in \R^d$, $|z|\leq 1$
    \begin{equation}
        \begin{aligned}
            z^\t\nabla^2 V_a(x) z
            =& aV_a (x) \left( \frac{a|z^\t x|^2}{1+|x|^2} + \frac{(1+|x|^2)^{1/2}|z|^2 - \frac{|z^\t x|^2}{(1+|x|^2)^{1/2}}}{1+|x|^2}\right)\\
            \leq& aV_a (x) \left( a|z|^2 + \frac{|z|^2}{(1+|x|^2)^{1/2}}\right)\\
            \leq& a(1+a)V_a (x).
        \end{aligned}
    \end{equation}
    so that $|\nabla^2V_a(x)|_{S^1}\leq d a(1+a)V_a(x)$. Inserting into~\cref{eq:cont_lyapunov1} yields
    \begin{equation}
            \Lc_t(V_a)(x) 
            \leq V_a(x)\left(\frac{a}{(1+|x|^2)^{1/2}} \langle x, b(t,x)\rangle + \maxB d a(1+a)\right).
    \end{equation}
    By~\cref{ass} we may choose $r,\kappa>0$ uniformly in $t$ such that for $|x|\geq r$, 
    \begin{equation}
        \langle x, b(t,x)\rangle<-\kappa |x| |b(t,x)|.
    \end{equation}
    It follows that there exists some $C>0$ such that
    \begin{equation}
        \langle x, b(t,x)\rangle \leq -\kappa |x| |b(t,x)| + C\1_{B_r(0)}\leq -\kappa |x| |b(t,x)| + C
    \end{equation}
    Inserting above yields
    \begin{equation}\label{eq_lyapunov_cont}
        \begin{aligned}
            \Lc_t(V_a)(x) 
            \leq& V_a(x)\left(-\frac{a(\kappa |x| |b(t,x)| - C)}{(1+|x|^2)^{1/2}} + \maxB d a(1+a)\right)
        \end{aligned}
    \end{equation}
    implying the drift condition since
    \begin{equation}
        \frac{|x| |b(t,x)|}{(1+|x|^2)^{1/2}}\rightarrow\infty
    \end{equation}
    as $|x|\rightarrow\infty$ uniformly in $t$ by~\cref{ass}. 
\end{proof}

\begin{corollary}\label{cor:moment_bounds_cont}
    Let $V_a$ as in~\cref{lemma:lyapunv_cont}. If $\mu_0(V_a)<\infty$ then it holds true that
    \begin{equation}
        \sup_{t\geq 0}\mu_t(V_a) < \infty.
    \end{equation}
    In particular, all moments of the continuous dynamics are uniformly bounded for $t>0$. That is, for every $p\in \N$, 
    \begin{equation}
        \Mp \coloneqq \sup_{t>0}\int |x|^p\dd \mu_t(x) <\infty.
    \end{equation}
\end{corollary}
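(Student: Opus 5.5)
The plan is to turn the drift condition of \cref{lemma:lyapunv_cont} into a differential inequality for $t\mapsto\mu_t(V_a)$ and close it with Grönwall's lemma.

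\textbf{Step 1 (localization).} Apply \ito's lemma to $V_a(X_t)$. By the computation in the proof of \cref{lemma:lyapunv_cont}, the generator acting on $V_a$ along~\eqref{eq:precond-langevin} is
\[
\langle\nabla V_a,b\rangle+\trace(B\nabla^2V_a)=\Lc_t(V_a).
\]
Since $V_a$ is unbounded, I stop the process at $\tau_R$. This makes the local martingale $\int_0^{t\wedge\tau_R}\sqrt2\,\nabla V_a(X_s)^\t B^{1/2}(s,X_s)\,\dd W_s$ a true martingale, because its integrand is bounded on $\{|x|\le R\}$. Taking expectations then gives
\[
\E[V_a(X_{t\wedge\tau_R})]=\mu_0(V_a)+\E\int_0^{t\wedge\tau_R}\Lc_s(V_a)(X_s)\,\dd s.
\]

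\textbf{Step 2 (Grönwall).} Insert the drift bound $\Lc_s(V_a)\le-\contlyapa V_a+\contlyapb$. To get a clean differential inequality, it is convenient to apply the same argument to $e^{\contlyapa t}V_a(X_t)$, whose drift is $e^{\contlyapa t}(\contlyapa V_a+\Lc_tV_a)\le\contlyapb e^{\contlyapa t}$. This yields
\[
\E\big[e^{\contlyapa(t\wedge\tau_R)}V_a(X_{t\wedge\tau_R})\big]\le\mu_0(V_a)+\frac{\contlyapb}{\contlyapa}\big(e^{\contlyapa t}-1\big).
\]

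\textbf{Step 3 (removing the localization).} Since $V_a\ge 0$, I let $R\to\infty$. Non-explosion from \cref{prop:preconditioned-well-posed} gives $\tau_R\to\infty$ almost surely, and Fatou's lemma then gives
\[
\mu_t(V_a)\le e^{-\contlyapa t}\mu_0(V_a)+\frac{\contlyapb}{\contlyapa}\le\mu_0(V_a)+\frac{\contlyapb}{\contlyapa}.
\]
This bound is uniform in $t$, which proves the first claim.

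\textbf{Step 4 (moments).} For every $p$ there is $c_{p,a}$ with $|x|^p\le c_{p,a}V_a(x)$, since exponential growth dominates polynomial growth. Hence $\Mp\le c_{p,a}\sup_t\mu_t(V_a)<\infty$.

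The only delicate point is the localization in Step 3. It relies on $\tau_R\to\infty$, which follows from global existence of the strong solution, together with the positivity of $V_a$, which permits Fatou's lemma. Everything else is routine once \cref{lemma:lyapunv_cont} is available.
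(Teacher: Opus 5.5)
Your proposal is correct and follows the same route as the paper: the drift condition of \cref{lemma:lyapunv_cont} combined with Gr\"onwall's lemma gives $\mu_t(V_a)\le e^{-\contlyapa t}\mu_0(V_a)+\frac{\contlyapb}{\contlyapa}(1-e^{-\contlyapa t})$, and the polynomial moments are then dominated by $V_a$. The only difference is that you justify the Gr\"onwall step by localizing at $\tau_R$ and applying Fatou's lemma, working with random $X_0$ directly, whereas the paper applies Gr\"onwall formally for Dirac initial data and then integrates over $\mu_0$.
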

\begin{proof}
    By~\cref{lemma:lyapunv_cont}, and Grönwall's lemma we have for the process $\mu_t$ initialized at the dirac distribution $\delta_{x_0}$
    \begin{equation}
        \mu_t(V_a)
        \leq (V_a(x_0)-\contlyapb/\contlyapa)\expo{-\contlyapa t} + \contlyapb/\contlyapa
        = V_a(x_0)\expo{-\contlyapa t} + \contlyapb/\contlyapa (1-\expo{-\contlyapa t}).
    \end{equation}
    Integrating over any initial measure $\mu_0$ yields
    \begin{equation}
        \mu_t(V_a)
        \leq \mu_0(V_a)\expo{-\contlyapa t} + \contlyapb/\contlyapa (1-\expo{-\contlyapa t}).
    \end{equation}
    In particular, $(\mu_t(V_a))_t$ is bounded for all $t$ and since for every $p\in \N$, there exists $C_p$ such that $|x|^p\leq C_pV_a(x)$ the result follows.
\end{proof}
\subsection{Drift condition and moment bounds for the discrete dynamics}\label{sec:appendix growth discrete}
\begin{lemma}\label{lemma:Lyapunov_discrete}
    Define the Lyapunov function $V_a(x) = \expo{a(1+|x|^2)^{1/2}}$. There exists $a_0>0$ such that for every $a<a_0$ there are $\lambda,b,R>0$ such that for all $t,\step$ the Markov kernel $R_h(t)$ satisfies the drift condition
    \begin{equation}
        R_\step(t)V_a(x)\leq \expo{-\lambda\step}V_a(x) + \step b\1_{B_R(0)}(x).
    \end{equation}
\end{lemma}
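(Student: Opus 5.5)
We prove the drift condition for the tamed kernel $R_\step(t)$ in three steps: a deterministic contraction of the tamed drift step, a Gaussian moment-generating-function bound for the noise, and a split into the regions $|x|\geq R$ and $|x|<R$. Throughout write $\psi(x)=(1+|x|^2)^{1/2}$, so $V_a=e^{a\psi}$, and set $y=x+\step b_\step(t,x)$. One step of the chain is $x\mapsto y+\sqrt{2\step}B^{1/2}(t,x)Z$ with $Z\sim\Nc(0,\id)$.

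\textbf{Step 1: deterministic part.} We quantify the contraction of $|y|$. The taming gives $\step|b_\step|=\frac{\step|b|}{1+\step|b|}\leq 1$, which we will use repeatedly. From \cref{ass} (as in \cref{lemma:growth_tamed}), there exist $r,\kappa>0$ such that for $|x|\geq r$ we have $\langle x/|x|,b\rangle\leq-\kappa|b|$ and $|b|\to\infty$. Expanding,
\begin{equation}
|y|^2=|x|^2+2\step\langle x,b_\step\rangle+\step^2|b_\step|^2\leq |x|^2-2\kappa\step|x||b_\step|+\step|b_\step|.
\end{equation}
On $|x|\geq r$ with $r$ large, $\step|b_\step|\geq \frac{\step c_r}{1+\step c_r}$, where $c_r\to\infty$ as $r\to\infty$. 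We only consider $\step\leq 1$; larger steps are handled the same way because $\step|b_\step|\leq1$. This gives
\begin{equation}
\psi(y)\leq \psi(x)-c\,\step
\end{equation}
for some $c>0$ on $|x|\geq r$, where we use concavity of the square root as in \cref{lemma:escape_time_discrete}. For $|x|<r$, \cref{lemma_taming_bound} gives $\psi(y)\leq\psi(x)+C_r\step$ instead.

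\textbf{Step 2: noise part.} Since $\psi$ is $1$-Lipschitz and $\nabla\psi$ is $1$-Lipschitz with bounded Hessian, we have
\begin{equation}
\psi(y+\xi)\leq\psi(y)+\langle\nabla\psi(y),\xi\rangle+\tfrac12|\xi|^2,\qquad \xi=\sqrt{2\step}B^{1/2}Z.
\end{equation}
Hence
\begin{equation}
\E e^{a\psi(y+\xi)}\leq e^{a\psi(y)}\,\E\exp\big(a\langle\nabla\psi(y),\xi\rangle+a\step\maxB|Z|^2\big).
\end{equation}
We bound the right-hand expectation by Cauchy–Schwarz. The Gaussian linear term contributes $\exp(2a^2\step\maxB)^{1/2}$. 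The chi-square term contributes $(1-4a\step\maxB)^{-d/4}$ whenever $4a\step\maxB<1/2$; for $\step\leq 1$ this is exactly where the restriction $a<a_0$, with $a_0$ of order $1/\maxB$, enters. Both factors are at most $e^{C a\step}$ with $C=C(d,\maxB,a_0)$. Alternatively one can compute the Gaussian integral exactly.

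\textbf{Step 3: combine.} Putting Steps 1 and 2 together,
\begin{equation}
R_\step(t)V_a(x)\leq V_a(x)\,e^{a\step(C-c)}\quad\text{for } |x|\geq r,
\end{equation}
and
\begin{equation}
R_\step(t)V_a(x)\leq V_a(x)\,e^{a\step(C+C_r)}\quad\text{for } |x|<r.
\end{equation}
The main obstacle is arranging $c>C$, i.e.\ making the drift contraction beat the noise inflation uniformly in $\step$. For small $\step$ this comes from taking $r$ large, which makes the contraction rate $c\sim\kappa\min(|b|,1/\step)$ large. For $\step$ near $1$ the tamed step moves a distance of order $\kappa$ while the noise inflation is of order $a$, so we use smallness of $a$ (again through $a_0$). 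We then set $\lambda=a(c-C)$. On the ball $|x|<r$, the factor $e^{a\step(C+C_r)}-e^{-\lambda\step}$ is at most $\step\,b'$, and $V_a\leq e^{a\psi(r)}$ there. Together these give the additive term $\step b\1_{B_R(0)}$ with $R=r$, completing the drift condition.
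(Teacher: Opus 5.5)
Your Step 1 and the estimate on the ball are fine. The gap is in Step 2, and Step 3 cannot repair it. By bounding the second-order term by $\tfrac12|\xi|^2$ you use the global bound $\nabla^2\psi\preceq\id$. As a result, the noise cost you obtain does not decay in $|x|$. Worse, it is \emph{linear} in $a$ with an $a$-independent coefficient: by Jensen your chi-square factor is at least $\E e^{a\step\maxB|Z|^2}\geq e^{ad\maxB\step}$, so necessarily $C\geq d\maxB$, however small $a$ is.

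The drift gain in the exponent, $ac\step$, is also linear in $a$. So the requirement $c>C$ compares two $a$-independent numbers, and your appeal to smallness of $a$ for $\step$ near $1$ fails. You compared a noise cost of order $a$ in the exponent with a displacement of order $\kappa$ in $\psi$, but in the exponent the drift also contributes only order $a\kappa$. Moreover, since $|y-x|=\step|b_\step|\leq 1$ and $\psi$ is $1$-Lipschitz, $c\step\leq 1$, i.e.\ $c\leq 1/\step$. Hence your argument closes at best for $\step<1/(d\maxB)$. When $d\maxB\geq 1$ this excludes step sizes of order one, which \cref{thm:convergence} allows ($\step_\ell\leq 1$).

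The missing idea is to show two things: the second-order noise cost is of order $d\maxB\step/|x|$, which is absorbed by taking $r$ large; and what remains is only of order $a^2\step$, which is absorbed by taking $a$ small. The paper does this as follows.
\begin{itemize}
\item It applies Gaussian concentration to the $\sqrt{2\maxB\step}$-Lipschitz map $z\mapsto\psi(y+\sqrt{2\step}B^{1/2}(t,x)z)$. This gives $R_\step(t)V_a(x)\leq e^{a^2\maxB\step}\exp\big(a\,\E\psi(y+\xi)\big)$.
\item It then uses Jensen: $\E\psi(y+\xi)\leq(1+|y|^2+2d\maxB\step)^{1/2}$.
\item The trace term is absorbed into the drift decrease $|y|^2\leq|x|^2-\kappa\step|x|$ once $r>4d\maxB/\kappa$.
\item Finally, $a<r\kappa/(4\maxB(1+r^2)^{1/2})$ beats the factor $e^{a^2\maxB\step}$, uniformly in $\step\leq\step_{\max}$.
\end{itemize}
Your Taylor route can also be repaired: use $\nabla^2\psi(z)\preceq\psi(z)^{-1}\id$ on the event $\{|\xi|\leq|y|/2\}$, and a Gaussian tail bound on its complement.

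Finally, your remark that larger steps are handled the same way is false, and in fact the lemma cannot hold uniformly in all $\step$. For $|x|>1$ one has $R_\step(t)V_a(x)\geq e^{a|y|}\,\E e^{a\langle y/|y|,\xi\rangle}\geq e^{a(|x|-1)+a^2\minB\step}$. This is incompatible with $e^{-\lambda\step}V_a(x)\leq e^{a(|x|+1)-\lambda\step}$ as $\step\to\infty$. So the statement needs $\step\leq\step_{\max}$; the paper's proof also uses this restriction implicitly through \cref{lemma:growth_tamed}.
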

\begin{proof}
    The proof closely follows~\cite{brosse2019tamed}. For any $x$ the map
    \[
        z\mapsto \phi(z) = (1+|x+\step b_\step(t,x) + \sqrt{2\step B(t,x)} z|^2)^{1/2}
    \]
    is $\sqrt{2\maxB\step}$-Lipschitz. Thus, by the log-Sobolev inequality for the Gaussian distribution $\frac{\dd \gamma}{\dd z}(z) = \frac{1}{(2\pi)^{d/2}}\expo{-\frac{|z|^2}{2}}$ (\cf~\cite[Theorem 5.5]{boucheron2003concentration}) and Cauchy-Schwartz we have that
    \begin{equation}\label{eq:lyapunov_discrete}
        \begin{aligned}
            R_\step(t)V_a(x) 
            =& \int \expo{a\phi(z)} \dd\gauss(z)\\
            \leq& \expo{a^2\maxB\step}\expo{a R_\step(t)\phi(x)}\\
            \leq& \expo{a^2\maxB\step}\expo{a\big(\int (1+|x+\step b_\step(t,x) + \sqrt{2\step B(t,x)} z|^2) \dd\gauss(z)\big)^{1/2}}\\
            \leq& \expo{a^2\maxB\step}\expo{a\big(1+|x+\step b_\step(t,x)|^2 + {2d\maxB\step} \big)^{1/2}}.
        \end{aligned}
    \end{equation}
    To bound the exponent we estimate
    \begin{equation}
        \begin{aligned}
            |x+\step b_\step(t,x)|^2
            =& |x|^2 + 2\step \langle x,b_\step(t,x)\rangle + \step^2|b_\step (t,,x)|^2\\
            =& |x|^2 + |x|\step \bigg(2 \bigg\langle \frac{x}{|x|},b_\step(t,x)\bigg\rangle + \frac{\step}{|x|}|b_\step (t,,x)|^2\bigg).
        \end{aligned}
    \end{equation}
    By~\cref{lemma:growth_tamed}, we can pick $r,\kappa$ independent of $\step>0$ such that 
    \begin{equation}
        \bigg(2 \bigg\langle \frac{x}{|x|},b_\step(t,x)\bigg\rangle + \frac{\step}{|x|}|b_\step (t,,x)|^2\bigg)\leq -\kappa
    \end{equation}
    for $|x|\geq r$. Potentially increasing $r$ so that $r>4d\maxB/\kappa$ we obtain
    \begin{equation}
        \begin{aligned}
            |x+\step b_\step(t,x)|^2 + 2d\maxB\step \leq |x|^2 - |x|\step( \kappa - 2d\maxB/|x|)\leq |x|^2 - |x|\step \kappa/2.
        \end{aligned}
    \end{equation}
    Note that for any $a>0$, the function $s\mapsto (a+s)^{1/2}$ is concave on $[-a,\infty)$ so that a linear approximation in $s=0$ yields for all $s\in [-a,\infty)$
    \[
        (a+s)^{1/2}\leq a^{1/2} + \frac{s}{2a^{1/2}}.
    \]
    Applied to $\big(1+|x|^2 - |x|\step \kappa/2 \big)^{1/2}$ with $a=1+|x|^2$ we get
    \begin{equation}
        \begin{aligned}
            \big(1+|x+\step b_\step(t,x)|^2 + {2d\maxB\step} \big)^{1/2}
            \leq& \big(1+|x|^2 - |x|\step \kappa/2 \big)^{1/2}\\
            \leq& \big(1+|x|^2 \big)^{1/2} - \frac{|x|\step \kappa/2}{2 \big(1+|x|^2 \big)^{1/2}}\\
            \leq& \big(1+|x|^2 \big)^{1/2} - \frac{r\step \kappa}{4 \big(1+r^2 \big)^{1/2}}
        \end{aligned}
    \end{equation}
    where in the last inequality we made use of the fact that $s\mapsto s/(1+s^2)^{1/2}$ is non-decreasing.
    Setting $a < \frac{r \kappa}{4\maxB \big(1+r^2 \big)^{1/2}}$ we obtain from~\eqref{eq:lyapunov_discrete}
    \begin{equation}
        R_h(t)V_a(x)\leq \expo{- ha\left(\frac{r \kappa}{4 \left(1+r^2 \right)^{1/2}}-a\maxB\right)}V_a(x).
    \end{equation}
    where $\lambda \coloneqq a\frac{r \kappa}{4 \big(1+r^2 \big)^{1/2}}-a\maxB>0$.
    On the other hand-side, we find
    \begin{equation}
        \begin{aligned}
            \big(1+|x+&\step b_\step(t,x)|^2 + {2d\maxB\step} \big)^{1/2}\\
            \leq&  \big(1+|x|^2+2\step |x||b_\step(t,x)| + \step^2 |b_\step(t,x)|^2 + {2d\maxB\step} \big)^{1/2}\\
            \leq&  \big(1+|x|^2\big)^{1/2}+\step\frac{2 |x||b_\step(t,x)| + \step |b_\step(t,x)|^2 + 2d\maxB}{2(1+|x|^2)^{1/2}}.
        \end{aligned}
    \end{equation}
    By~\cref{lemma_taming_bound}, there exists $C>0$ such that for all $\step\leq \step_{\max}$ and $|x|\leq R$
    \begin{equation}
        \frac{2 |x||b_\step(t,x)| + \step |b_\step(t,x)|^2 + 2d\maxB}{2(1+|x|^2)^{1/2}}\leq C.
    \end{equation}
    Therefore, for $|x|\leq C$ we have 
    \begin{equation}
        \begin{aligned}
            R_\step(t)V_a(x) 
            \leq& \expo{a^2\maxB\step}\expo{a\big(1+|x+\step b_\step(t,x)|^2 + {2d\maxB\step} \big)^{1/2}}\\
            \leq& \expo{a^2\maxB\step}\expo{a\big(1+|x|^2)^{1/2}+C\step}\\
            \leq& \expo{a\big(1+|x|^2)^{1/2}+(C+a^2\maxB)\step}\\
            =& V_a(x) \expo{(C+a^2\maxB)\step}
        \end{aligned}
    \end{equation}
    so that the elementary inequality $1-\expo{s}\leq s$ yields
    \begin{equation}
        \begin{aligned}
            R_\step(t)V_a(x) - V_a(x)&\expo{-\lambda\step}
            \leq V_a(x)\left(\expo{(C+a^2\maxB)\step} -\expo{-\lambda\step}\right)\\
            \leq& V_a(x)\expo{(C+a^2\maxB)\step}\left(1 -\expo{-(\lambda+C+a^2\maxB)\step}\right)\\
            \leq& V_a(x)\expo{(C+a^2\maxB)\step}(\lambda+C+a^2\maxB)\step\\
        \end{aligned}
    \end{equation}
    so that we obtain the desired result with 
    \begin{equation}
        b = \expo{(1+r^2)^{1/2}}\expo{(C+a^2\maxB)\step_{\max}}(\lambda+C+a^2\maxB).
    \end{equation}
\end{proof}

\begin{lemma}\label{lemma:moment_bounds}
    Let $a<a_0$ and $V_a$ as in~\cref{lemma:Lyapunov_discrete}. If $\mu_0(V_a)<\infty$, then
    \begin{equation}
        \sup_{\bstep,k}\mu_0Q^\bstep_{0,t_k}(V_a)<\infty.
    \end{equation}
    In particular, all moments of the discrete scheme are uniformly bounded, that is, for every $p\in\N$ there exists $C(p)>0$ such that 
    \begin{equation}
        \sup_{\bstep,t} \int |x|^p\dd\hat\mu_t(x)<C(p).
    \end{equation}
\end{lemma}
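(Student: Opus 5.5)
The plan is to iterate the one-step drift condition of \cref{lemma:Lyapunov_discrete} along the chain and use that $\sum_k \step_k e^{-\lambda(t_n - t_{k+1})}$ is bounded uniformly in the step sizes.

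\textbf{One-step recursion.} Fix $a<a_0$ and let $\lambda,b,R$ be the constants from \cref{lemma:Lyapunov_discrete}. These do not depend on $t$ or $\step$, provided $\step\leq \step_{\max}$, which we assume throughout. Set $m_k \coloneqq \mu_0 Q^\bstep_{0,t_k}(V_a)$. Integrating the drift inequality against the law of $Y_{t_k}$ and using $\1_{B_R(0)}\leq 1$ gives
\begin{equation}
    m_{k+1}\leq e^{-\lambda\step_k} m_k + b\,\step_k .
\end{equation}

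\textbf{Unrolling.} Induction gives
\begin{equation}
    m_n\leq e^{-\lambda t_n}\mu_0(V_a) + b\sum_{k=0}^{n-1}\step_k\, e^{-\lambda(t_n-t_{k+1})}.
\end{equation}
The sum is a right-endpoint Riemann-type sum. Since $e^{-\lambda(t_n-s)}\geq e^{-\lambda(t_n-t_{k+1})}e^{-\lambda\step_k}$ for $s\in[t_k,t_{k+1}]$, we get
\begin{equation}
    \step_k e^{-\lambda(t_n-t_{k+1})}\leq e^{\lambda\step_{\max}}\int_{t_k}^{t_{k+1}} e^{-\lambda(t_n-s)}\,\dd s .
\end{equation}
Summing over $k$,
\begin{equation}
    m_n\leq \mu_0(V_a) + \frac{b\,e^{\lambda\step_{\max}}}{\lambda}.
\end{equation}
This bound is uniform in $n$ and $\bstep$. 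An equivalent route avoids the integral comparison: the recursion shows that $m_k\leq \max\{\mu_0(V_a), \sup_{\step}\, b\step/(1-e^{-\lambda\step})\}$ is preserved, and $b\step/(1-e^{-\lambda\step})\leq b e^{\lambda\step_{\max}}/\lambda$.

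\textbf{Moments.} For every $p$ there is $C_p$ with $|x|^p\leq C_p V_a(x)$, because $V_a$ grows exponentially. This yields the uniform moment bound at the grid times $t_k$. For intermediate $t\in(t_k,t_{k+1}]$, write
\begin{equation}
    Y_t = Y_{t_k} + (t-t_k)\, b_{\step_k}(t_k,Y_{t_k}) + \sqrt{2}\,B^{1/2}(t_k,Y_{t_k})(W_t-W_{t_k}).
\end{equation}
We have $|b_\step|\leq 1/\step$, so $(t-t_k)|b_{\step_k}|\leq 1$. We also have $|B^{1/2}|\leq \maxB^{1/2}$, and Gaussian increments have moments of every order with variance at most $\step_{\max}$. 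Hence $\E|Y_t|^p\leq C(p)\big(1+\E|Y_{t_k}|^p\big)$, which is uniform in $t$ and $\bstep$.

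\textbf{Main obstacle.} The delicate point is uniformity in $\bstep$. It relies on the constants of \cref{lemma:Lyapunov_discrete} being independent of $\step$, which requires an upper bound $\step\leq\step_{\max}$ (in \cref{thm:convergence} this is $\step_\ell\leq 1$), and on controlling $\step/(1-e^{-\lambda\step})$ for $\step\in(0,\step_{\max}]$. Since this function is increasing in $\step$ with limit $1/\lambda$ as $\step\to0$, it is bounded on that interval, so the argument goes through.
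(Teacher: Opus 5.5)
Your proof is correct and is essentially the paper's argument. Both iterate the one-step drift inequality of \cref{lemma:Lyapunov_discrete} and bound the resulting sum $\sum_\ell \step_\ell e^{-\lambda(t_k-t_{\ell+1})}$ uniformly over $\step_\ell\leq\step_{\max}$; your ``equivalent route'' via the invariant level $\max\{\mu_0(V_a),\sup_{\step\leq\step_{\max}} b\step/(1-e^{-\lambda\step})\}$ is the paper's fixed-point argument for $\phi_\step(s)=e^{-\step}s+\step$, and your handling of the off-grid times $t\in(t_k,t_{k+1}]$ via $|b_\step|\leq 1/\step$ fills in a step the paper leaves implicit in its ``in particular''.
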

\begin{proof}
    We can write
    \begin{equation}
        \mu_0Q^\bstep_{0,t_k}(V_a) = \mu_0(R_{\step_1}(0)\cdots R_{\step_k}(t_{k-1})V_a).
    \end{equation}
    In turn, by~\cref{lemma:Lyapunov_discrete} we have
    \begin{equation}
        \hat{\mu}_0(R_{\step_0}(0)\cdots R_{\step_{k-1}}(t_{k-1})V_a(x))
        \leq \expo{-a^2t_k}\hat{\mu}_0(V_a) + b \sum_{\ell=0}^{k} \step_\ell \expo{-(t_k-t_{\ell})}
    \end{equation}
    so that the result follows if $\sum_{\ell=0}^{k} \step_\ell \expo{-(t_k-t_\ell)}<\infty$. Let us denote for a specific sequence $\bstep = (\step_0,\step_1,\dots)$
    \[
        S_k^{\bstep} = \sum_{\ell=0}^{k} \step_\ell \expo{-(t_k-t_\ell)}.
    \]
    We have $S_{k+1}^{\bstep} = \expo{-\step_{k+1}}S_k^{\bstep} + \step_{k+1}$. Consider the mapping $\phi_\step: \R\rightarrow\R$, $\phi_\step(s) = \expo{-\step}s + \step$. One can easily check that for any $\step$, $\phi_h$ is a contraction on $[0,\infty)$ with unique fixed point $\frac{h}{1-\expo{-h}}$ which converges to $1$ as $h\rightarrow0$. Moreover, $\phi(s)\leq s$ whenever $s\geq \frac{h}{1-\expo{-h}}$ and $\phi(s)\leq \frac{h}{1-\expo{-h}}$ whenever $s\leq \frac{h}{1-\expo{-h}}$. Thus, 
    \begin{equation}
        \phi(s)\leq \max\left\{s,\frac{h}{1-\expo{-h}}\right\}
    \end{equation}
    implying that 
    \begin{equation}
        \begin{aligned}
            \sup_{\bstep,k}S^{\bstep}_k\leq \sup_{\bstep,k}\max\left\{S^{\bstep}_{k-1},\frac{h_{k}}{1-\expo{-h_{k}}}\right\}
            &\leq \sup_{\bstep,k}\max\left\{S^{\bstep}_{k-1},\sup_{h\leq \bstep_{\max}}\frac{h}{1-\expo{-h}}\right\}\\
            &\leq \max\left\{\step_0,\sup_{h\leq \bstep_{\max}}\frac{h}{1-\expo{-h}}\right\}\\
            &<\infty.
        \end{aligned}
    \end{equation}
\end{proof}

\begin{lemma}\label{lemma:bounded_KL}
    The $\KL$ divergence between the iterates $\mu_0Q^\bstep_{0,t_k}$ and the target $\pi$ remains bounded, that is,
    \begin{equation}
        \sup_{\bstep,k}\KL(\mu_0Q^\bstep_{0,t_k}|\pi)<\infty.
    \end{equation}
\end{lemma}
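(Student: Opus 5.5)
The plan is to split the relative entropy into an entropy part and a potential part and bound each uniformly in $\bstep$ and $k$. Write $\hat\mu_k = \mu_0 Q^\bstep_{0,t_k}$. Whenever $\hat\mu_k$ has a density $\hat q_k$, we have
\begin{equation*}
    \KL(\hat\mu_k|\pi) = \int \hat q_k\log \hat q_k\,\dd x + \int \Pot\,\dd\hat\mu_k + \log\int_{\R^d} e^{-\Pot(\xi)}\,\dd\xi .
\end{equation*}

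The last term is a finite constant. For the second term, \cref{ass} gives that $\nabla\Pot$ is $\lip$-Lipschitz with $\nabla\Pot(0)=0$, so $\Pot(x)\leq \Pot(0)+\frac{\lip}{2}|x|^2$. Hence $\int\Pot\,\dd\hat\mu_k\leq \Pot(0)+\frac{\lip}{2}\sup_{\bstep,k}\hat\mu_k(|x|^2)$. This is finite by \cref{lemma:moment_bounds}, since $|x|^2\leq C V_a(x)$ and we assume $\mu_0(V_a)<\infty$ for some $a<a_0$. It therefore remains to bound the negative differential entropy $\int \hat q_k\log\hat q_k$ from above, that is, to bound the entropy $H(\hat\mu_k)$ from below, uniformly.

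\textbf{Entropy via the Gaussian mixture structure.} For $k\geq1$, $\hat\mu_{k}=\hat\mu_{k-1}R_{\step_{k-1}}(t_{k-1})$ is a mixture over $y\sim\hat\mu_{k-1}$ of the Gaussians
\begin{equation*}
    \Nc\big(y+\step_{k-1} b_{\step_{k-1}}(t_{k-1},y),\; 2\step_{k-1}B(t_{k-1},y)\big).
\end{equation*}
Concavity of differential entropy under mixtures gives
\begin{equation*}
    H(\hat\mu_k)\geq \int \tfrac12\log\det\big(2\pi e\, 2\step_{k-1}B(t_{k-1},y)\big)\,\dd\hat\mu_{k-1}(y)\geq \tfrac d2\log\big(4\pi e\,\minB\step_{k-1}\big).
\end{equation*}
This already settles every step with $\step_{k-1}\geq h_*$ for a fixed threshold $h_*>0$.

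\textbf{Small steps.} For small step sizes I would instead track the entropy through the iteration. Decompose one step into the deterministic map $T(y)=y+\step b_\step(t,y)$ followed by the noise. The map $T$ has Jacobian $\id+\step\nabla b_\step$, where $|\nabla b_\step(t,y)|\leq C(1+|y|^{\nB+1})$ by the local Lipschitz bounds of \cref{ass}. For $\step$ small on the relevant region, change of variables then gives
\begin{equation*}
    H(T_\#\mu)\geq H(\mu)-C\step\big(1+\mu(|y|^{\nB+1})\big),
\end{equation*}
and the moment term is uniformly bounded by \cref{lemma:moment_bounds}. Adding the conditionally Gaussian noise should not decrease entropy. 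To make this rigorous despite the $y$-dependence of the covariance, I would realize the noise as a Gaussian with covariance $2\step\minB\id$ that is independent of $y$, plus an additional conditional Gaussian, and use $H(U+G)\geq H(U)$ for independent $G$.

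This alone yields only a lower bound that decays linearly in $t_k$. To get uniformity I would run the recursion only over a window of fixed length $T_0$, starting from the last index where either $\step_\ell\geq h_*$ or where an aggregated block of steps of total length $\geq h_*$ began. Over such a block the accumulated Gaussian noise has covariance $\succeq 2\minB h_*\id$, so I would re-apply the mixture concavity bound to the whole block, conditionally on the drift path. The first window starting at $t=0$ additionally requires $\mu_0$ to have finite entropy, which I would assume alongside $\KL(\mu_0|\pi)<\infty$.

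\textbf{Main obstacle.} The hard step is this uniform-in-$\bstep$ entropy lower bound. The one-step mixture bound degenerates as $\step\to0$, and the pathwise recursion accumulates error linearly in time. The block argument, where the composition of many small tamed steps is shown to carry Gaussian noise of covariance at least of order $\minB h_*$, is the delicate part. I would try to prove it by writing the block as a nonlinear function of the independent Gaussians $Z_\ell$ and lower-bounding the entropy via the conditional entropy given all but the final slice of noise.
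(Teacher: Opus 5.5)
\textbf{The gap is the small-step entropy bound.} Your reduction is sound as far as it goes. The bound $\Pot(x)\leq\Pot(0)+\frac{\lip}{2}|x|^2$ together with \cref{lemma:moment_bounds} controls the energy term. The mixture bound $H(\hat\mu_k)\geq\frac d2\log(4\pi e\minB\step_{k-1})$ is correct for steps bounded below. But in your decomposition, the substance of the lemma is a uniform entropy lower bound as $\step\to0$, already for constant $\step\to0$, and there the argument does not close.

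The recursion $H(\hat\mu_k)\geq H(\hat\mu_{k-1})-C\step_{k-1}$ has no restoring term, so everything rests on the block argument, which fails as described. Conditionally on $Y_{t_{\ell_0}}$, the law of $Y_{t_{\ell_0+m}}$ is not a Gaussian mixture. Both $b_\step$ and $B^{1/2}$ are evaluated at intermediate points that depend on the earlier $Z_\ell$. So you cannot condition on ``the drift path'' and still have independent Gaussian noise with covariance $\succeq 2\minB h_*\id$. Your fallback, conditioning on all noise except the last increment, returns $H(Y_{t_k})\geq H(Y_{t_k}\,|\,Y_{t_{k-1}})\geq\frac d2\log(4\pi e\minB\step_{k-1})$. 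That is exactly the one-step bound that degenerates; leaving a longer final block unconditioned brings the nonlinearity back.

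The recursion step itself is also not justified. Splitting off an independent $\Nc(0,2\step\minB\id)$ component only gives $H(\hat\mu_k)\geq H(T(Y)+V'')$, where $V''$ is correlated with $T(Y)$. State-dependent noise can lower entropy: in one dimension, for $U\sim e^{-W}$ and $V\,|\,U\sim\Nc(0,s\sigma^2(U))$, one gets $\frac{\dd}{\dd s}H(U+V)|_{s=0}=\frac12\int\sigma^2W''e^{-W}\dd u$. This is negative if $\sigma^2$ is concentrated where $W''<0$. Moreover, your change-of-variables bound needs $T$ injective with $\step|\nabla b_\step|$ small, which is not available for $|y|\gtrsim\step^{-1/(\nB+1)}$.

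\textbf{What is missing.} You need a mechanism by which the noise raises the entropy by an amount of order $\step$ whenever it is low. In your framework this would be something like the entropy power inequality, $e^{2H(U+G)/d}\geq e^{2H(U)/d}+4\pi e\minB\step$ for independent $G\sim\Nc(0,2\step\minB\id)$. With $N_k=e^{2H(\hat\mu_k)/d}$, this would turn the recursion into $N_k\geq e^{-C\step_{k-1}}N_{k-1}+c\,\step_{k-1}$, which is uniformly bounded below. That only works once the issues above are handled.

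\textbf{How the paper proceeds.} The paper sidesteps entropy lower bounds entirely. It differentiates $\KL(\hat\mu_t|\pi)$ along the continuous-time interpolation of each tamed step and retains the dissipation $-\int|\nabla\log(\hat q_t/p)|^2_{B(t_k,x)}\hat q_t\,\dd x$. The discretization errors are the frozen drift and the term $(B(t_k,y)-B(t,x))\nabla\log\hat q(x,t|y,t_k)$ from the frozen diffusion. Via Young's inequality these are absorbed into half of the dissipation plus a constant $C_3$ controlled by the moment bounds. The PLSI then gives $\frac{\dd}{\dd t}\KL(\hat\mu_t|\pi)\leq-\frac{2}{\Cplsi^{\max}}\KL(\hat\mu_t|\pi)+2C_3$. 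Gr\"onwall over each step keeps the iterates below $\max\{\KL(\mu_0|\pi),\Cplsi^{\max}C_3\}$, uniformly in $\bstep$ and $k$. The PLSI-driven dissipation is precisely the restoring force your entropy recursion lacks.
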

\begin{proof}
    In the following we denote as $\hat q(x,t|y,s)$ the density of the distribution of $Y_t| Y_s=y$ where $Y_t$ solves 
    \begin{equation}
        \dd Y_t = \drift(s,Y_{s})\dd t + \sqrt{2 B(s,Y_s)}\dd W_s.
    \end{equation}
    The density $\hat{q}$ satisfies the Fokker-Planck equation
    \begin{equation}
        \begin{aligned}
            \partial_t \hat q (x,t|x_{t_k},t_k) 
            =& \divergence(B(t_k,x_{t_k}) \nabla \Pot (x_{t_k}) \hat q (x,t|x_{t_k},t_k)) + \divergence(B(t_k,x_{t_k})\nabla \hat q (x,t|x_{t_k},t_k) )\\
            =& \divergence((B(t_k,x_{t_k}) \nabla \Pot (x_{t_k}) + (B(t_k,x_{t_k})-B(t,x))\nabla \log\hat q (x,t|x_{t_k},t_k)) \hat q (x,t|x_{t_k},t_k)) \\
            &+ \divergence(B(t,x)\nabla \hat q (x,t|x_{t_k},t_k) )
        \end{aligned}
    \end{equation}
    Let us denote for simplicity the drift term as
    \begin{equation}
        D(x,t|x_{t_k},t_k)= B(t_k,x_{t_k}) \nabla \Pot (x_{t_k}) + (B(t_k,x_{t_k})-B(t,x))\nabla \log\hat q (x,t|x_{t_k},t_k)
    \end{equation}
    Via integration over $x_{t_k}$ it follows
    \begin{equation}
        \begin{aligned}
            \partial_t \hat q_t(x) 
            =& \int \partial_t \hat q (x,t|y,t_k)  \hat q_{t_k} (y) \dd y  \\
            =& \int \divergence(D(x,t|y,t_k)\hat q (x,t|y,t_k)) \hat q_{t_k} (y)\dd y
            + \int \divergence(B(t,x)\nabla \hat q (x,t|y,t_k) ) \hat q_{t_k} (y) \dd y  \\
            =& \divergence\bigg(\int D(x,t|y,t_k) \hat q (y,t_k|x,t) \dd y \hat q_{t} (x) \bigg)
            + \divergence\bigg(B(t,x)\nabla \int \hat q (x,t|y,t_k) \hat q_{t_k} (y) \dd y\bigg)  \\
            =& \divergence\bigg(\int D(x,t|y,t_k) \hat q (y,t_k|x,t) \dd y \hat q_{t} (x) \bigg) + \divergence\bigg(B(t,x)\nabla \hat q_t(x)\bigg).
        \end{aligned}
    \end{equation}
    Note that
    \begin{equation}
        \begin{aligned}
            |(B(t_k,x_{t_k})-&B(t,x))\nabla \log\hat q (x,t|x_{t_k},t_k)|\\
            &\leq L(|t_k-t| + |x_{t_k}-x|)|(4\step)^{-1} B^{-1}(t_k,x_{t_k})(x-(x_{t_k}+\step b_\step(t_k,x_{t_k}))|
        \end{aligned}
    \end{equation}
    Similarly to the continuous time case we can compute the time derivative of the $\KL$ along the iterates as
    \begin{equation}\label{eq:discrete1}
        \begin{aligned}
            \frac{\dd}{\dd t} \KL(\hat \mu_t|\pi)
            =& \frac{\dd}{\dd t} \int \hat q_t \log\frac{\hat{q}_t}{p}\dd x\\
            =& - \int \nabla \log\frac{\hat{q}_t}{p} \cdot \int D(x,t|y,t_k) \hat q (y,t_k|x,t) \hat q_{t} (x) \dd y  \dd x\\
            &-\int \nabla \log\frac{\hat{q}_t}{p}\cdot B(t,x)\nabla \hat q_{t} (x) \dd x\\
            =& - \int \nabla \log\frac{\hat{q}_t}{p} \cdot \int B(t_k,x) \nabla \Pot (x) \hat q (y,t_k|x,t) \hat q_{t} (x) \dd y  \dd x\\
            &-\int \nabla \log\frac{\hat{q}_t}{p}\cdot B(t,x)\nabla \hat q_{t} (x) \dd x\\
            &- \int \nabla \log\frac{\hat{q}_t}{p} \cdot \int (B(t_k,y) \nabla \Pot (y)-B(t_k,x) \nabla \Pot (x)) \hat q (y,t_k|x,t) \hat q_{t} (x) \dd y  \dd x\\
            &- \int \nabla \log\frac{\hat{q}_t}{p} \cdot \int \bigg\{(B(t_k,y)-B(t,x))\nabla \log\hat q (x,t|y,t_k)\bigg\} \hat q (y,t_k|x,t) \hat q_{t} (x) \dd y \dd x\\
            &= A+B+C+D.
            \end{aligned}
    \end{equation}
    where we \emph{added a zero} to account for the discretization in the drift. For the first two terms we find
    \begin{equation}
        \begin{aligned}
            A+B
            =&- \int \nabla \log\frac{\hat{q}_t}{p} \cdot B(t_k,x) \nabla \Pot (x) \hat q_{t} (x) \dd x
            -\int \nabla \log\frac{\hat{q}_t}{p}\cdot B(t,x)\nabla \hat q_{t} (x) \dd x\\
            =&- \int \nabla \log\frac{\hat{q}_t}{p} \cdot B(t_k,x) \nabla \log\frac{\hat q_t}{p} \hat q_{t} (x) \dd x
            \end{aligned}
    \end{equation}
    For the remaining terms, using Young's inequality in the form $\langle p,q\rangle = \langle (\epsilon B)^{1/2} p,(\epsilon B)^{-1/2} q\rangle\leq \frac{\epsilon}{2}|p|_B^2 + \frac{1}{2\epsilon}|q|_{B^{-1}}^2\leq \frac{\epsilon}{2}|p|_B^2 + \frac{1}{2\epsilon\minB}|q|^2$ and Jensen's inequality we find
    \begin{equation}
        \begin{aligned}
            C+D=
            &- \int \nabla \log\frac{\hat{q}_t}{p} \cdot \int (B(t_k,y) \nabla \Pot (y)-B(t_k,x) \nabla \Pot (x)) \hat q (y,t_k|x,t) \hat q_{t} (x) \dd y \dd x\\
            &- \int \nabla \log\frac{\hat{q}_t}{p} \cdot \int \bigg\{(B(t_k,y)-B(t,x))\nabla \log\hat q (x,t|y,t_k)\bigg\} \hat q (y,t_k|x,t) \hat q_{t} (x) \dd y \dd x\\
            \leq& \int \epsilon |\nabla \log \big(\frac{\hat{q}_t}{p}\big)|^2_{B(t_k,x)} \hat q_t \dd x\\
            &+ \int \frac{1}{2\epsilon\minB} \bigg(\int (B(t_k,y) \nabla \Pot (y)-B(t_k,x) \nabla \Pot (x)) \hat q (y,t_k|x,t) \dd y\bigg)^2 \hat q_{t} (x) \dd x\\
            &+ \int \frac{1}{2\epsilon\minB} \bigg(\int \bigg\{(B(t_k,y)-B(t,x))\nabla \log\hat q (x,t|y,t_k)\bigg\} \hat q (y,t_k|x,t) \dd y \bigg)^2  \hat q_{t} (x) \dd x
        \end{aligned}
    \end{equation}
    By the moment bounds in~\cref{lemma:moment_bounds} the second term is bounded uniformly over $\bstep$, $k$. For the third term we estimate using the local Lipschitz continuity
    \begin{equation}
        \begin{aligned}
            \int &\bigg(\int \bigg\{(B(t_k,y)-B(t,x))\nabla \log\hat q (x,t|y,t_k)\bigg\} \hat q (y,t_k|x,t) \dd y \bigg)^2  \hat q_{t} (x) \\
            \leq& \iint \bigg\{\lipB (1+|x|^\nB + |y|^\nB)(|t-t_k|^\holderB + |x-y|)\\
            &\times\big|(2(t-t_k)B(t_k,y))^{-1}(x-y - (t-t_k) b_\step(t_k,y))\big|\bigg\}^2 \hat q (x,t,y,t_k)\dd x\dd y\\
            \leq& \frac{\lipB^2}{4\minB^2}\iint \bigg\{(1+|x|^\nB + |y|^\nB)(|t-t_k|^\holderB + |x-y|)\big|\frac{x-y}{t-t_k} - b_\step(t_k,y))\big|\bigg\}^2 \hat q (x,t,y,t_k)\dd x\dd y\\
            \leq& \frac{\lipB^2}{\minB^2}\iint \bigg\{(1+|x|^\nB + |y|^\nB)^2(|t-t_k|^{2\holderB} + |x-y|^2)\big|\frac{|x-y|^2}{|t-t_k|^2} + |b_\step(t_k,y)|^2)\big|\bigg\}\hat q (x,t,y,t_k)\dd x\dd y.
        \end{aligned}
    \end{equation}
    Using the tower law, we may write the above integral in an iterated way as $\iint \dots \hat q (x,t,y,t_k)\dd x\dd y = \int \int \dots q (x,t|y,t_k)\dd x \hat q_{t_k}(y)\dd y$. Moreover, we can write $x\sim \hat q (x,t|y,t_k)$ as $y + (t-t_k)\drift_\step(t_k,y) + \sqrt{2(t-t_k) B(t_k,y)}z$ with $z\sim\Nc(0,\id)$ so that $x-y = (t-t_k)\drift_\step(t_k,y) +\sqrt{2(t-t_k) B(t_k,y)}z$. In particular, we have for any $p\geq 1$
    \begin{equation}
        |x-y|^p\leq 2^{p-1}|t-t_k|^p|\drift_\step(t_k,y)|^p + 2^{3p/2-1}|t-t_k|^{p/2}\maxB^{p/2}|z|^p
    \end{equation}
    thus
    \begin{equation}
        \int (1+|x|^\nB + |y|^\nB)^2 |x-y|^p\hat q(x,t,y,t_k)\dd x \dd y
        \leq C |t-t_k|^{p/2}
    \end{equation}
    for some constant $C>0$ due to the moment bounds.
    In particular, for $\delta\geq 1/2$
    \begin{equation}\label{eq:bounded_kl1}
        \begin{aligned}
            \int |t-t_k|^{2(\delta-1)}|y-x|^2 \hat q(x,t|y,t_k)\dd x\leq C_1<&\infty\\
            \int |t-t_k|^{-2}|y-x|^4 \hat q(x,t|y,t_k)\dd x\leq C_2 <&\infty
        \end{aligned}
    \end{equation}
    where we emphasize both constants $C_1$, $C_2$ in~\eqref{eq:bounded_kl1} remain bounded as $t-t_k\rightarrow 0$ and can, thus, be chosen independently of $\bstep$. In summary, we find 
    \begin{equation}
        \begin{aligned}
            C+D
            \leq& \int \epsilon |\nabla \log \big(\frac{\hat{q}_t}{p}\big)|^2_{B(t_k,x)} \hat q_t \dd x + \frac{1}{\epsilon}C_3
        \end{aligned}
    \end{equation}
    Combining this with the estimate for $A+B$ and choosing $\epsilon = 1/2$ we obtain 
    \begin{equation}
        \begin{aligned}
            \frac{\dd}{\dd t} \KL(\hat \mu_t|\pi)
            \leq& - \frac{1}{2}\int \nabla \log\frac{\hat{q}_t}{p} \cdot B(t_k,x) \nabla \log\frac{\hat q_t}{p} \hat q_{t} (x) \dd x +2C_3\\
            \leq& - \frac{2}{\Cplsi(t_k)}\KL(\hat \mu_t|\pi) +2C_3\\
            \leq& - \frac{2}{\Cplsi^{\max}}\KL(\hat \mu_t|\pi) +2C_3
        \end{aligned}
    \end{equation}
    Thus, we have 
    \begin{equation}
        \frac{\dd}{\dd t}(\KL(\hat \mu_t|\pi) - \Cplsi^{\max}C_3)\leq - \frac{2}{\Cplsimin}(\KL(\hat \mu_t|\pi)-\Cplsi^{\max}C_3)
    \end{equation}
    Gronwall's lemma then yields
    \begin{equation}
        \KL(\hat \mu_{k+1}|\pi) - \Cplsi^{\max}C_3\leq \expo{-\frac{2\step_k}{\Cplsi^{\max}}}(\KL(\hat \mu_{k}|\pi)- \Cplsi^{\max}C_3).
    \end{equation}
    As a consequence, whenever $\KL(\hat \mu_{k}|\pi)\leq \Cplsi^{\max}C_3$, then this remains the case for all. Conversely, whenever $\KL(\hat \mu_{k}|\pi)\geq  \Cplsi^{\max}C_3$, then $\KL(\hat \mu_{k}|\pi)\geq \KL(\hat \mu_{k+1}|\pi)$ which concludes the proof.
\end{proof}

\subsection{Auxiliary results}\label{sec:appendix auxiliary}
In this section we collect several continuity and growth results regarding drift and diffusion of the preconditioned \gls{sde} as well as its tamed discretization.

\begin{lemma}\label{lemma:drift_locally_lip}
    Under~\cref{ass}, $b:\R^d\rightarrow\R$ is locally Lipschitz in space and locally Hölder in time. More precisely, there exist $\lipdrift>0$, such that
    \begin{equation}
        |b(t,x) - b(s,y)|\leq \lipdrift (1+|x|^{\nB+1} + |y|^{\nB+1})(|x-y| + |s-t|^\holderB)
    \end{equation}
\end{lemma}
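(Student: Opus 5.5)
The plan is to write $b(t,x)-b(s,y)$ as a sum of three differences, each controlled by one item of \cref{ass}. We add and subtract mixed terms:
\begin{equation*}
    b(t,x)-b(s,y) = -\big(B(t,x)-B(t,y)\big)\nabla\Pot(x) - B(t,y)\big(\nabla\Pot(x)-\nabla\Pot(y)\big) - \big(B(t,y)-B(s,y)\big)\nabla\Pot(y) + \big(\divergence B(t,x)-\divergence B(s,y)\big).
\end{equation*}
The last difference is split once more through the intermediate point $\divergence B(t,y)$, giving a spatial increment and a temporal increment.

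The individual terms are bounded as follows.
\begin{itemize}
    \item \emph{First term.} The spatial local Lipschitz bound on $B$ gives $\lipB(1+|x|^\nB+|y|^\nB)|x-y|$. Since $\nabla\Pot(0)=0$ and $\nabla\Pot$ is $\lip$-Lipschitz, we have $|\nabla\Pot(x)|\le \lip|x|$. The product is therefore at most $\lip\lipB(1+|x|^\nB+|y|^\nB)|x|\,|x-y|$.
    \item \emph{Second term.} This is at most $\maxB\lip|x-y|$ by the upper bound $B\preceq\maxB\id$.
    \item \emph{Third term.} The Hölder-in-time bound gives $\lipB(1+|y|^\nB)|s-t|^\holderB\cdot\lip|y|$.
    \item \emph{Divergence terms.} These are directly bounded by $\lipB(1+|x|^\nB+|y|^\nB)|x-y|$ and $\lipB(1+|y|^\nB)|s-t|^\holderB$.
\end{itemize}

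Collecting everything, each coefficient is a polynomial in $|x|,|y|$ of degree at most $\nB+1$. To absorb it into $C(1+|x|^{\nB+1}+|y|^{\nB+1})$ we use elementary inequalities:
\begin{itemize}
    \item $|x|^\nB|y|\le |x|^{\nB+1}+|y|^{\nB+1}$, by Young's inequality with exponents $\tfrac{\nB+1}{\nB}$ and $\nB+1$;
    \item $|x|\le 1+|x|^{\nB+1}$;
    \item $|x|^\nB\le 1+|x|^{\nB+1}$.
\end{itemize}
Taking $\lipdrift$ as a constant built from $\lip,\lipB,\maxB$ then yields the claim with the factor $(|x-y|+|s-t|^\holderB)$.

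There is no real obstacle; the only care needed is the bookkeeping of the mixed-degree monomials, which are handled uniformly by the Young-type absorption above. Note that only the first two items of \cref{ass} and the stated Lipschitz/Hölder bounds are used, not the growth condition in its fourth item. If $\nabla\Pot$ is merely locally Lipschitz, as in the remark following \cref{ass}, the same argument goes through with $\nB+1$ replaced by a larger exponent.
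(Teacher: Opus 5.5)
Your proposal is correct and follows essentially the same route as the paper: add and subtract intermediate terms, bound the $B$- and $\divergence B$-increments by item 3 of \cref{ass}, use $|\nabla\Pot(x)|\le \lip|x|$ and $B\preceq\maxB\id$, and absorb the resulting polynomial of degree $\nB+1$ into $1+|x|^{\nB+1}+|y|^{\nB+1}$. The differences are cosmetic. The paper passes through the intermediate point $(s,x)$ rather than $(t,y)$, and it leaves implicit the Young-type absorption of the mixed monomials that you spell out.
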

\begin{proof}
    Local Lipschitz continuity of $(t,x)\mapsto \divergence B(t,x)$ follows directly by \cref{ass} as we have
    \begin{equation}\label{eq:lipdrift1}
        \begin{aligned}
            |\divergence B(t,x) - \divergence B(s,y)|
            &\leq |\divergence B(t,x) - \divergence B(s,x)| + |\divergence B(s,x) - \divergence B(s,y)|\\
            &\leq \lipB (1+|x|^\nB)|s-t|^\holderB + \lipB (1+|x|^\nB + |y|^\nB)|x-y|.
        \end{aligned}
    \end{equation}
    Regarding the local Lipschitz continuity of $x\mapsto B(t,x) \nabla \Pot (x)$, since $B$ exhibits the same continuity properties as $\divergence B$ we simply compute
    \begin{equation}
        \begin{aligned}
            |B(t,x)\nabla\Pot(x)-&B(s,y)\nabla\Pot(y)|\\
            \leq& |B(t,x)-B(s,y)||\nabla\Pot(x)| + |B(s,y)||\nabla\Pot(x)-\nabla\Pot(y)|\\
            \leq& \sup_{|z|\leq |x|}|\nabla\Pot(z)| (\lipB (1+|x|^\nB)|s-t|^\holderB + \lipB (1+|x|^\nB + |y|^\nB)|x-y|) \\
            &+ \sup_{|z|\leq |y|}|B(s,z)| \lip|x-y|.
        \end{aligned}
    \end{equation}
    By Lipschitz continuity of $\nabla\Pot$ we have $|\nabla\Pot(z)| \leq \lip |z| + |\nabla \Pot(0)|$ which, together with $B\preceq\maxB\id$ implies the desired result.
\end{proof}

\begin{lemma}
    The mapping $B^{1/2}(t,x)$ is locally Hölder in time and locally Lipschitz in space, \ie, 
    \begin{equation}
        \begin{aligned}
            |B^{1/2}(t,x)-B^{1/2}(t,y)|\vee &\leq \frac{\lipB}{2\sqrt{\minB}} (1+|x|^\nB + |y|^\nB) |x-y|\\
            |B^{1/2}(s,x)-B^{1/2}(t,x)|\vee &\leq \frac{\lipB}{2\sqrt{\minB}} (1+|x|^\nB) |s-t|^\holderB.
        \end{aligned}
    \end{equation}
\end{lemma}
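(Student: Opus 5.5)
The plan is to reduce both estimates to a perturbation bound for the matrix square root on the cone of symmetric matrices with spectrum in $[\minB,\maxB]$. The inequality we want is
\begin{equation}
    |A^{1/2}-C^{1/2}|\leq \frac{1}{2\sqrt{\minB}}|A-C|\quad\text{for all symmetric } A,C \text{ with } \minB\id\preceq A,C.
\end{equation}
Once this is available, we substitute $A=B(t,x)$, $C=B(t,y)$, respectively $A=B(s,x)$, $C=B(t,x)$. The claimed bounds then follow directly from the local Lipschitz and Hölder estimates for $B$ in \cref{ass}, with the constant $\lipB$ replaced by $\lipB/(2\sqrt{\minB})$. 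Boundedness from below, $\minB\id\preceq B(t,x)$, holds uniformly in $(t,x)$ by \cref{ass}.

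To prove the perturbation bound, we use the Sylvester equation satisfied by the difference of square roots. Set $D=A^{1/2}-C^{1/2}$. A direct expansion gives
\begin{equation}
    A^{1/2}D + DC^{1/2} = A - C.
\end{equation}
Let $v$ be a unit eigenvector of the symmetric matrix $D$ with eigenvalue $\lambda$ satisfying $|\lambda|=|D|$, the operator norm. Testing the equation with $v$ on both sides yields
\begin{equation}
    \lambda\big(v^\t A^{1/2}v + v^\t C^{1/2}v\big) = v^\t (A-C) v.
\end{equation}
Both quadratic forms on the left are at least $\sqrt{\minB}$, so $|\lambda|\cdot 2\sqrt{\minB}\leq |A-C|$, which is the desired inequality.

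As an alternative route, one can use the integral representation
\begin{equation}
    A^{1/2}=\frac{1}{\pi}\int_0^\infty s^{-1/2}A(A+s)^{-1}\,\dd s,
\end{equation}
together with the resolvent identity. This gives the same bound with a constant of the same order.

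The main point needing care is the norm. If $|\cdot|$ denotes the Frobenius norm rather than the operator norm, the eigenvector argument does not apply directly. In that case I would diagonalize $A=U\Lambda U^\t$ and $C=V M V^\t$ and write the Sylvester equation entrywise in the mixed basis:
\begin{equation}
    (U^\t D V)_{ij}\big(\sqrt{\Lambda_i}+\sqrt{M_j}\big) = (U^\t (A-C) V)_{ij}.
\end{equation}
Since each factor $\sqrt{\Lambda_i}+\sqrt{M_j}\geq 2\sqrt{\minB}$, every entry of $U^\t D V$ is bounded by the corresponding entry of $U^\t(A-C)V$ divided by $2\sqrt{\minB}$. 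The bound then holds for any unitarily invariant norm, in particular the Frobenius norm. Everything else in the argument is a direct substitution.
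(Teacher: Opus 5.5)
Your proof is correct, and it departs from the paper's at exactly the point that matters. The paper tries to transplant the scalar identity $\sqrt a-\sqrt c=(a-c)/(\sqrt a+\sqrt c)$ to quadratic forms with an \emph{arbitrary} test vector $z$. It multiplies and divides $|z^\t(B^{1/2}(t,x)-B^{1/2}(s,y))z|$ by $z^\t B^{1/2}(t,x)z+z^\t B^{1/2}(s,y)z$, and then replaces $(z^\t B^{1/2}z)^2$ by $|z|^2z^\t Bz$. That replacement holds only when $z$ is an eigenvector of $B^{1/2}$; in general Cauchy--Schwarz gives just $\leq$, which is useless for bounding a difference.

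The intermediate bound the paper's step would yield is $|z^\t(A^{1/2}-C^{1/2})z|\leq\frac{1}{2\sqrt{\minB}}|z^\t(A-C)z|$ for all $z$, and this is actually false. Take $A=\diag(1,9)$, $C=5\id$, $z=(1,1)/\sqrt2$: the right-hand side vanishes while the left-hand side equals $\sqrt5-2$.

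Your Sylvester identity $A^{1/2}D+DC^{1/2}=A-C$ is the correct non-commutative analogue of the scalar factorization. Testing it only against an eigenvector $v$ of the symmetric matrix $D$ is precisely what lets both terms collapse to $\lambda(v^\t A^{1/2}v+v^\t C^{1/2}v)$. Since the operator norm of $D$ is attained at such a $v$, you recover the paper's constant $1/(2\sqrt{\minB})$ by a valid argument. The paper's version only looks more direct because of the invalid step; yours costs one extra line and is sound.

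One overstatement in your last paragraph: entrywise domination of $U^\t DV$ by $U^\t(A-C)V/(2\sqrt{\minB})$ gives the Frobenius bound. It does not give ``any unitarily invariant norm'', because entrywise domination does not in general control other such norms (e.g.\ the operator norm). The claim is nonetheless true, and your integral-representation route proves it. The resolvent identity gives $A(A+s)^{-1}-C(C+s)^{-1}=s(A+s)^{-1}(A-C)(C+s)^{-1}$, whose norm in any unitarily invariant norm is at most $s|A-C|/(\minB+s)^2$. Since $\frac1\pi\int_0^\infty s^{1/2}(\minB+s)^{-2}\,\dd s=\frac{1}{2\sqrt{\minB}}$, that route gives exactly the same constant, not merely one of the same order.
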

\begin{proof}
    We have for any $z\in\R^d\setminus\{0\}$
    \begin{equation}
        \begin{aligned}
            |z^\t &B^{1/2}(t,x) z - z^\t B^{1/2}(s,y) z| \\
            =& \frac{|z^\t B^{1/2}(t,x) z - z^\t B^{1/2}(s,y) z|\;(z^\t B^{1/2}(t,x) z + z^\t B^{1/2}(s,y) z)}{z^\t B^{1/2}(t,x) z + z^\t B^{1/2}(s,y) z}\\
            =& \frac{||z|^2z^\t B(t,x) z  -(z^\t B^{1/2}(s,y)z) (z^\t B^{1/2}(t,x)z) + (z^\t B^{1/2}(t,x)z)(z^\t B^{1/2}(s,y)z) - |z|^2z^\t B(s,y) z|}{z^\t B^{1/2}(t,x) z + z^\t B^{1/2}(s,y) z}\\
            =& \frac{||z|^2z^\t B(t,x) z - |z|^2z^\t B(s,y) z|}{z^\t B^{1/2}(t,x) z + z^\t B^{1/2}(s,y) z}\\
            \leq & \frac{|z|^2|z^\t (B(t,x)-B(s,y)) z|}{2\minB^{1/2}|z|^2}\\
            \leq & \frac{1}{2}\minB^{-1/2}|z|^2 |B(t,x)-B(s,y)|
        \end{aligned}
    \end{equation}
    implying $|B^{1/2}(t,x)-B^{1/2}(s,y)|\leq \frac{1}{2}\minB^{-1/2} |B(t,x)-B(s,y)|$ so that the assertion directly follows from~\cref{ass}.
\end{proof}

\begin{lemma}\label{lemma_approx_taming}
    There exists $\tamingconst>0$ such that
    \begin{equation}
        |b_\step(t,x) - b(t,x)|\leq \step \tamingconst (1+|x|^{\nB+1}).
    \end{equation}
\end{lemma}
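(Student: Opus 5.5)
The plan is to start from an exact algebraic identity for the taming error and then control it with a growth bound on the drift. Since $b_\step = b/(1+\step|b|)$, we have
\begin{equation*}
    b_\step(t,x) - b(t,x) = \frac{b(t,x) - b(t,x)(1+\step|b(t,x)|)}{1+\step|b(t,x)|} = -\frac{\step\,|b(t,x)|\,b(t,x)}{1+\step|b(t,x)|}.
\end{equation*}
Taking norms gives
\begin{equation*}
    |b_\step(t,x)-b(t,x)| = \frac{\step|b(t,x)|^2}{1+\step|b(t,x)|}\leq \step\,|b(t,x)|^2 .
\end{equation*}
The whole lemma therefore reduces to a uniform-in-$t$ estimate $|b(t,x)|^2\leq \tamingconst(1+|x|^{\nB+1})$. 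Equivalently, we need $|b(t,x)|\lesssim 1+|x|^{(\nB+1)/2}$.

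Next I bound the two parts of $b = -B\nabla\Pot + \divergence B$ separately. For the first part, \cref{ass} gives $B\preceq\maxB\id$, and $\nabla\Pot$ is $\lip$-Lipschitz with $\nabla\Pot(0)=0$. Hence $|B(t,x)\nabla\Pot(x)|\leq \maxB\lip|x|$, so $|B\nabla\Pot|^2\leq \maxB^2\lip^2|x|^2$. This is at most $C(1+|x|^{\nB+1})$ as soon as $\nB\geq 1$, using $|x|^2\leq 1+|x|^{\nB+1}$ in that case; if $\nB<1$ one simply enlarges $\nB$, which is harmless since the local Lipschitz assumption is monotone in $\nB$. For the second part, the spatial local Lipschitz bound on $\divergence B$ in \cref{ass}, applied with $y=0$, gives
\begin{equation*}
    |\divergence B(t,x)|\leq |\divergence B(t,0)| + \lipB(1+|x|^{\nB})|x|.
\end{equation*}
The time-Hölder bound on $\divergence B$ at $x=0$ is needed to control $\sup_t|\divergence B(t,0)|$ uniformly in $t$; I would bound it by $|\divergence B(0,0)| + \lipB\,|t|^\holderB$, or require it to be bounded as the remark does.

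The main obstacle is reconciling the exponent. The crude bound above gives $|\divergence B|^2\lesssim 1+|x|^{2(\nB+1)}$, which is too large by a square. So the key step is to use a sharper bound on $\divergence B$. In the setting of the paper's remark, $\divergence B$ is bounded, and then $|b|\leq C(1+|x|)$, $|b|^2\leq C(1+|x|^2)\leq \tamingconst(1+|x|^{\nB+1})$, which yields exactly the stated exponent. More generally, it suffices that $|\divergence B(t,x)|\leq C(1+|x|^{(\nB+1)/2})$ uniformly in $t$. I would obtain this from the twice-differentiability and growth constants in \cref{ass}, relabeling $\nB$ as in the paper's remark that the growth constants are only kept equal for notational simplicity.

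If only the crude growth is available, I would not fall back on the interpolation $\step|b|^2/(1+\step|b|)\leq\min\{\step|b|^2,|b|\}$, because that loses the factor $\step$. Instead I would absorb the extra power by choosing the displayed $\nB$ in this lemma to be the (relabeled) growth exponent of $|b|^2$. The conclusion $|b_\step-b|\leq\step\tamingconst(1+|x|^{\nB+1})$ then follows with $\tamingconst$ collecting $\maxB^2\lip^2$, $\lipB^2$, and $\sup_t|\divergence B(t,0)|^2$.
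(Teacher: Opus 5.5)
Your exact identity $|b_\step-b|=\step|b|^2/(1+\step|b|)\le\step|b|^2$ is correct. It differs from the paper's one-line proof, which writes $|b_\step-b|=\step|b|/(1+\step|b|)\le\step|b|$. That drops a factor $|b|$, and lets the paper get away with the crude growth $|b(t,x)|\lesssim 1+|x|^{\nB+1}$, so you cannot simply follow it. With the correct identity, the lemma hinges on $|b(t,x)|^2\le\tamingconst(1+|x|^{\nB+1})$ uniformly in $t$, and this is the step you do not prove. Your three routes to it each fall short:
\begin{itemize}
\item Boundedness of $\divergence B$ is not part of \cref{ass}; the remark lists it only as a sufficient condition for item~4.
\item Your appeal to ``twice differentiability and the growth constants'' to get $|\divergence B|\lesssim 1+|x|^{(\nB+1)/2}$ comes with no argument.
\item Relabeling $\nB$ proves a weaker statement with exponent $2(\nB+1)$. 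That might suffice downstream, but it is not the claim.
\end{itemize}
Even the crude route is not uniform in time. Your bound $|\divergence B(0,0)|+\lipB t^{\holderB}$ is unbounded in $t$, so you have no $t$-independent control of $|\divergence B(t,0)|$, yet $\tamingconst$ must not depend on $t$.

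The missing idea is that the uniform bound $B\preceq\maxB\id$, together with the spatial Lipschitz bound on $\divergence B$, already controls $\divergence B$ pointwise. Fix $t$, $x$, $i$ and let $g=(\divergence B(t,\cdot))_i$. By symmetry of $B$, $g$ is the divergence of the field $y\mapsto B(t,y)e_i$, whose norm is at most $\maxB$. Take $U=\{y:|y-x|<\ell\}$ with $\ell\le 1$. The divergence theorem gives $|\int_U g\,\dd y|\le \maxB|\partial U|=d\maxB|U|/\ell$. Meanwhile \cref{ass} gives $|g(y)-g(x)|\le\Lambda(x)\ell$ on $U$, with $\Lambda(x)=\lipB(1+|x|^{\nB}+(1+|x|)^{\nB})$. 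Averaging over $U$,
\begin{equation*}
|g(x)|\le \frac{d\maxB}{\ell}+\Lambda(x)\,\ell .
\end{equation*}
Choosing $\ell=\min\{1,(d\maxB/\Lambda(x))^{1/2}\}$ yields $|g(x)|\le 2(d\maxB\Lambda(x))^{1/2}+2d\maxB$. Hence $|\divergence B(t,x)|\le C(1+|x|^{\nB/2})$ with $C$ independent of $t$. It follows that $|b|^2\le C'(1+|x|^2+|x|^{\nB})\le\tamingconst(1+|x|^{\nB+1})$ under your normalization $\nB\ge 1$. That normalization is genuinely needed: for $B=\id$ and $\Pot=|x|^2/2$, letting $\step\to 0$ shows the claim fails if $\nB<1$. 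With this step inserted, the rest of your argument goes through.
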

\begin{proof}
    The proof is a simple computation
    \begin{equation}
        \begin{aligned}
            |b_\step(t,x) - b(t,x)| 
            =& \frac{\step |b(t,x)|}{1+\step |b(t,x)|}
            \leq \step |b(t,x)|
        \end{aligned}
    \end{equation}
    implying the result by boundedness of $B$, linear growth of $\nabla\Pot$, and~\cref{ass}.
\end{proof}

\begin{lemma}\label{lemma_taming_bound}
    There exists $\tamebound>0$ such that 
    \begin{equation}
        |b_\step(t,x)|\leq\frac{1}{\step}\wedge (1+\step)\tamingconst (1+|x|^{\nB+1}).
    \end{equation}
\end{lemma}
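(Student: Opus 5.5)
The bound $|b_\step(t,x)|\leq \frac{1}{\step}\wedge (1+\step)\tamingconst(1+|x|^{\nB+1})$ splits into two independent parts, and we will treat them separately. The first part, $|b_\step(t,x)|\leq 1/\step$, follows directly from the definition of the taming. Writing $u=|b(t,x)|\geq 0$, we have $|b_\step(t,x)| = \frac{u}{1+\step u}$. Since $\step u < 1+\step u$, this gives $\frac{u}{1+\step u} < \frac{1}{\step}$ for every $u\geq 0$, uniformly in $t$ and $x$.

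For the second part, we use the elementary inequality $\frac{u}{1+\step u}\leq u$. It therefore suffices to bound $|b(t,x)|$ by a polynomial of degree $\nB+1$ in $|x|$, uniformly in $t$. We split $b(t,x) = -B(t,x)\nabla\Pot(x) + \divergence B(t,x)$ and treat each term.

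\emph{Preconditioned gradient.} By \cref{ass}, $B\preceq\maxB\id$, $\nabla\Pot$ is $\lip$-Lipschitz, and $\nabla\Pot(0)=0$. Hence $|B(t,x)\nabla\Pot(x)|\leq \maxB\lip|x|$.

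\emph{Divergence term.} Here we use the local Lipschitz bound of \cref{ass} with $y=0$:
\[
|\divergence B(t,x)|\leq |\divergence B(t,0)| + \lipB(1+|x|^{\nB})|x|.
\]
The main obstacle is the uniform-in-$t$ control of $|\divergence B(t,0)|$. The time-Hölder condition only yields $|\divergence B(t,0)-\divergence B(0,0)|\leq \lipB t^{\holderB}$, which is not uniform in $t$. We would therefore first try to argue that uniform boundedness of $\divergence B(\cdot,0)$ is implicit in the setting. For the interpolated preconditioner it holds because $\lambda_t\in[0,1]$ and $B_1$ is time-independent. In general we would add it as a mild standing assumption, which is in any case also used implicitly in \cref{lemma_approx_taming}. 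With $c_0\coloneqq\sup_t|\divergence B(t,0)|<\infty$, we then use $|x|\leq 1+|x|^{\nB+1}$ and $|x|^{\nB+1}\leq 1+|x|^{\nB+1}$ to obtain
\[
|b(t,x)|\leq (c_0+\maxB\lip+2\lipB)(1+|x|^{\nB+1}).
\]
This bound is consistent with the constant $\tamingconst$ of \cref{lemma_approx_taming}; enlarging $\tamingconst$ if necessary, we may assume $|b(t,x)|\leq\tamingconst(1+|x|^{\nB+1})$.

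Combining the two parts, $|b_\step(t,x)|\leq |b(t,x)|\leq \tamingconst(1+|x|^{\nB+1})\leq(1+\step)\tamingconst(1+|x|^{\nB+1})$. The factor $(1+\step)$ is harmless slack. Taking the minimum with the first part gives the claim. The constant $\tamebound$ named in the statement can be set equal to $\tamingconst$; it plays no further role.
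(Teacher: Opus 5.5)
Your route is the paper's: the bound $1/\step$ from $s/(1+\step s)\leq 1/\step$, and the polynomial bound by comparing $|b_\step|$ with $|b|$ and then bounding $|b|$ through $B\preceq\maxB\id$, linear growth of $\grad\Pot$, and \cref{ass}. The paper instead writes $|b_\step|\leq|b_\step-b|+|b|\leq(1+\step)|b|$ via \cref{lemma_approx_taming}. Your direct $|b_\step|\leq|b|$ is slightly sharper and shows that the factor $(1+\step)$ is redundant.

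The one real gap is the step you call the main obstacle. You do not need to add $\sup_t|\divergence B(t,0)|<\infty$ as a standing assumption, because it already follows from item 3 of \cref{ass}. The spatial Lipschitz bound there is imposed on $B$ itself, with constants independent of $t$, and $B(t,\cdot)$ is $C^2$. Take $y=x+\epsilon e_j$, divide by $\epsilon$, and let $\epsilon\to 0$; with $|\cdot|$ the operator norm this gives
\[
|\partial_{x_j}B(t,x)|\leq \lipB(1+2|x|^{\nB}).
\]
Each matrix entry is bounded by the operator norm, and $(\divergence B)_i=\sum_j\partial_{x_j}B_{ij}$, so
\[
|\divergence B(t,x)|\leq d^{3/2}\lipB(1+2|x|^{\nB})\quad\text{for all } t\geq 0,\ x\in\R^d .
\]
Another matrix norm only changes the dimensional constant. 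In particular $\sup_t|\divergence B(t,0)|\leq d^{3/2}\lipB$, so your proof goes through under the paper's hypotheses as stated. The detour through the Lipschitz bound on $\divergence B$ is not even needed, since $\divergence B$ grows at most like $|x|^{\nB}$. Your observation that \cref{lemma_approx_taming} uses this implicitly is correct. Its terse justification ``and \cref{ass}'' is valid precisely because of this derivative bound. With this inserted, your proof is complete.
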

\begin{proof}
    The first bound follows from the fact that
    \[
        \sup_{s\geq 0}\frac{s}{1+\step s} = \frac{1}{\step}.
    \]
    For the second bound we note that 
    \[
        |b_\step(t,x)|\leq |b_\step(t,x)-b(t,x)| + |b(t,x)|\leq (1+\step)|b(t,x)|
    \]
    implying the result as in~\cref{lemma_approx_taming}.
\end{proof}

\begin{lemma}\label{lemma:growth_tamed}
    There exists $\step_{\max}>0$ such that 
    \begin{equation}
        \lim_{r\rightarrow\infty}\sup_{\substack{|x|\geq r,\\t\geq 0,\, \step\leq\step_{\max}}} 2 \bigg\langle \frac{x}{|x|},b_\step(t,x)\bigg\rangle + \frac{\step}{|x|}|b_\step (t,,x)|^2<0.
    \end{equation}
\end{lemma}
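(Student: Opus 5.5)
Write $u=x/|x|$ and $\beta=|b(t,x)|$. Since the taming only rescales $b$ by a positive scalar, we have $b_\step = b/(1+\step\beta)$, and the quantity in \cref{lemma:growth_tamed} becomes
\begin{equation}
    2\frac{\langle u,b(t,x)\rangle}{1+\step\beta} + \frac{\step}{|x|}\frac{\beta^2}{(1+\step\beta)^2}
    = \frac{\beta}{1+\step\beta}\left(2\left\langle u,\frac{b(t,x)}{\beta}\right\rangle + \frac{\step\beta}{|x|(1+\step\beta)}\right).
\end{equation}
The plan is to bound the two factors separately, using only the last item of \cref{ass}.

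From the angle condition in \cref{ass}, pick $r_0$ and $\kappa>0$ such that $\langle u, b/\beta\rangle\leq-\kappa$ for all $|x|\geq r_0$ and all $t\geq 0$. Since $\frac{\step\beta}{1+\step\beta}<1$, the second summand in the bracket is at most $1/|x|$. Hence, for $|x|\geq r_1:=\max\{r_0,1/\kappa\}$, the bracket is at most $-2\kappa+1/|x|\leq-\kappa$, uniformly in $t$ and in every $\step>0$.

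For the prefactor, the map $s\mapsto s/(1+\step s)$ is increasing. By the growth condition in \cref{ass}, choose $r_2$ with $\beta\geq 1$ for $|x|\geq r_2$. Then for $\step\leq\step_{\max}:=1$ we get $\frac{\beta}{1+\step\beta}\geq\frac{1}{1+\step_{\max}}=\frac12$. Combining the two bounds, for $|x|\geq\max\{r_1,r_2\}$ the whole expression is at most $-\kappa/2$, uniformly in $t\geq0$ and $\step\leq\step_{\max}$. Since the supremum is nonincreasing in $r$, the limit is at most $-\kappa/2<0$.

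The only delicate point is the prefactor. Without a lower bound on $\beta$ it could degenerate to $0$, and then the strict negativity would be lost. This is exactly why the restriction $\step\leq\step_{\max}$ and the divergence $|b|\to\infty$ from \cref{ass} are needed. Any finite $\step_{\max}$ works, at the cost of the constant $1/(1+\step_{\max})$. Uniformity in $t$ is inherited directly from the $\sup_{t\geq0}$ and $\inf_{t\geq0}$ in \cref{ass}.
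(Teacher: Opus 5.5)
Your proof is correct and follows essentially the same route as the paper's. Both arguments factor out $|b(t,x)|/(1+h|b(t,x)|)$. The angle condition then makes the bracket at most $-\epsilon$ once $|x|$ exceeds roughly $1/\epsilon$, since $\frac{h|b|}{1+h|b|}<1$. Finally, $|b|\to\infty$ uniformly in $t$, together with monotonicity of $s\mapsto s/(1+h_{\max}s)$, keeps the prefactor bounded away from zero. Your explicit choice $h_{\max}=1$ and the resulting bound $-\kappa/2$ just make concrete what the paper leaves implicit.
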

\begin{rmk}
    We want to point out that it is integral that the above $\liminf$ is uniformly over $\step$.\footnote{Note that \cite[A.2]{brosse2019tamed} incorrectly state the condition only for each fixed step size despite the fact that the proofs rely on the constants $\kappa,M_1$ in the proof of \cite[Proposition 3]{brosse2019tamed} be chosen uniformly over the step size. However, \cite{brosse2019tamed} is, nonetheless, correct as their proof of \cite[Lemma 2]{brosse2019tamed} is strong enough to support also the uniform $\liminf$.}
\end{rmk}
\begin{proof}
    By~\cref{ass} there exist $\epsilon,r>0$ such that for $|x|>r$, $\langle \frac{x}{|x|},b(t,x)\rangle\leq -\epsilon |b(t,x)|$. It follows
    \begin{equation}
        \begin{aligned}
            2 \bigg\langle \frac{x}{|x|},&b_\step(t,x)\bigg\rangle + \frac{\step}{|x|}|b_\step (t,,x)|^2\\
            \leq& -2\epsilon \frac{|b(t,x)|}{1+\step|b(t,x)|} + \frac{\step}{|x|}\frac{|b(t,x)|^2}{(1+\step|b(t,x)|)^2}\\
            = & \frac{|b(t,x)|}{1+\step|b(t,x)|}\bigg(-2\epsilon  + \frac{\step}{|x|}\frac{|b(t,x)|}{1+\step|b(t,x)|}\bigg)\\
            = & \frac{|b(t,x)|}{1+\step|b(t,x)|}\bigg(-2\epsilon  + \frac{1}{|x|}\frac{|b(t,x)|}{\frac{1}{\step}+|b(t,x)|}\bigg)\\
            \leq & \frac{|b(t,x)|}{1+\step|b(t,x)|}\bigg(-2\epsilon  + \frac{1}{|x|}\frac{|b(t,x)|}{\frac{1}{\step_{\max}}+|b(t,x)|}\bigg).
        \end{aligned}
    \end{equation}
    We can choose $r$ sufficiently large independently of $\step$ so that for $|x|\geq r$
    \begin{equation}
        \frac{1}{|x|}\frac{|b(t,x)|}{\frac{1}{\step_{\max}}+|b(t,x)|}<\epsilon.
    \end{equation}
    It follows 
    \begin{equation}
        \begin{aligned}
            2 \bigg\langle \frac{x}{|x|},&b_\step(t,x)\bigg\rangle + \frac{\step^2}{|x|}|b_\step (t,,x)|^2
            \leq -\epsilon \frac{|b(t,x)|}{1+\step_{\max}|b(t,x)|}.
        \end{aligned}
    \end{equation}
    Lastly, by again potentially increasing $r$ so that $|b(t,x)|>0$ for $|x|\geq r$ we can ensure that
    \begin{equation}
        \begin{aligned}
            \inf_{\substack{|x|\geq r,\\t\geq 0,\, \step\leq\step_{\max}}}\frac{|b(t,x)|}{1+\step_{\max}|b(t,x)|}>0
        \end{aligned}
    \end{equation}
    due to the facts $|b(t,x)|\rightarrow\infty$ as $|x|\rightarrow \infty$ uniformly in $t$ and since the function $s\mapsto s/(1+s\step_{\max})$ monotonically increasing for $s\geq 0$.
\end{proof}

\begin{lemma}\label{lemma:novikov}
    Let $(Y_t)_{t=0}^T$ be an adapted process, $\alpha>0$ and assume that Novikov's condition is satisfied, that is,
    \begin{equation}
        \E\left[ \expo{\frac{\alpha^2}{2}\int_0^T |Y_s|^2 \dd s } \right]<\infty.
    \end{equation}
    Then it holds true that
    \begin{equation}
        \P[\sup_{0\leq t\leq T} Y_t-\frac{\alpha}{2}\langle Y\rangle_t\geq R]\leq \expo{-\alpha R}.
    \end{equation}
\end{lemma}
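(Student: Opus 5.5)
The plan is the exponential supermartingale argument: define
\[
Z_t = \expo{\alpha Y_t - \frac{\alpha^2}{2}\langle Y\rangle_t},
\]
where, in the way the lemma is used in~\cref{lemma:escape_time_cont}, $Y_t=\int_0^t H_s\,\dd W_s$ is a continuous local martingale with $Y_0=0$ and $\langle Y\rangle_t=\int_0^t|H_s|^2\dd s$. The integrand appearing in Novikov's condition is read as this $H$. The key identity is that the event in question is the same as a level-crossing event for $Z$:
\[
\Big\{Y_t - \frac{\alpha}{2}\langle Y\rangle_t\geq R\Big\} = \big\{Z_t\geq e^{\alpha R}\big\},
\]
because multiplying the inequality by $\alpha>0$ and exponentiating gives exactly $\alpha Y_t-\frac{\alpha^2}{2}\langle Y\rangle_t\geq \alpha R$. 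Hence
\[
\P\Big[\sup_{t\leq T}\Big(Y_t-\frac{\alpha}{2}\langle Y\rangle_t\Big)\geq R\Big]=\P\Big[\sup_{t\leq T}Z_t\geq e^{\alpha R}\Big].
\]

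First, by \ito's formula, $Z$ satisfies $\dd Z_t=\alpha Z_t H_t\,\dd W_t$ with $Z_0=1$, so it is a nonnegative continuous local martingale. Second, Novikov's condition $\E[\exp(\frac{\alpha^2}{2}\int_0^T|H_s|^2\dd s)]<\infty$ is exactly Novikov's criterion for the local martingale $\alpha Y$. It therefore implies that $Z$ is a true martingale on $[0,T]$ with $\E[Z_t]=1$. In fact a nonnegative local martingale is already a supermartingale by Fatou, with $\E[Z_t]\leq 1$, and that weaker statement would suffice here.

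Third, apply Doob's maximal inequality for nonnegative (super)martingales with continuous paths:
\[
\P\Big[\sup_{t\leq T}Z_t\geq \lambda\Big]\leq \frac{\E[Z_0]}{\lambda}=\frac{1}{\lambda},
\]
and take $\lambda=e^{\alpha R}$, which yields the bound $\expo{-\alpha R}$. If one wants to avoid Doob for supermartingales, an alternative is to stop at $\sigma=\inf\{t:Z_t\geq e^{\alpha R}\}\wedge T$. Optional stopping gives $1\geq\E[Z_\sigma]\geq e^{\alpha R}\,\P[\sigma<T \text{ or } Z_T\geq e^{\alpha R}]$, and path continuity ensures $Z_\sigma=e^{\alpha R}$ on the crossing event.

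The only delicate point I expect is matching the hypothesis to the object. The statement phrases Novikov in terms of ``$Y_s$'' itself, while the natural condition involves the integrand $H$ of $Y$. I will make this identification explicit, noting that in the application $|H_s|^2\leq 2\maxB$ is bounded, so the condition holds trivially. I will also remark that the supermartingale property alone, which needs no Novikov, already gives the claim.
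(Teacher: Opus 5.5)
Your proposal is correct and follows the paper's proof. The paper likewise forms the exponential martingale $M_t=\exp\bigl(\int_0^t\alpha Y_s\,\dd W_s-\frac{\alpha^2}{2}\int_0^t|Y_s|^2\dd s\bigr)$, identifies the event with $\{\sup_{t\leq T}M_t\geq e^{\alpha R}\}$, and applies Doob's (sub)martingale maximal inequality. Reading $Y$ as the stochastic integral $\int_0^t H_s\,\dd W_s$, with Novikov imposed on $H$, is what makes that identification valid, since the paper's write-up conflates integrand and integral. Your remark that the nonnegative-supermartingale property already suffices, so Novikov is not actually needed, is a correct minor strengthening.
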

\begin{proof}
    By the assumption that Novikov's condition is satisfied we have that
    \begin{equation}
        M_t = \expo{\int_0^t \alpha Y_s \dd W_s - \frac{\alpha^2}{2}\int_0^t |Y_s|^2\dd s}
    \end{equation}
    is a martingale with $\E[|M_t|] = \E[M_t] = 1$ for all $t$ (\cf~\cite[Corollary 5.13, Section 3.5]{karatzas1991brownian}). By the submartingale inequality (\cf~\cite[Theorem 3.8, Section 1.3]{karatzas1991brownian}) it follows that
    \begin{equation}
        \begin{aligned}
            \P[\sup_{0\leq t\leq T}Y_t-\frac{\alpha}{2}\langle Y\rangle_t\geq R] 
            = \P[\sup_{0\leq t\leq T}M_t\geq \expo{\alpha R}] 
            \leq \frac{\E[M_t]}{\expo{\alpha R}} = \expo{-\alpha R}.
        \end{aligned}
    \end{equation}
\end{proof}

\subsection{Details for the two-dimensional Rosenbrock potential}
\label{appendix:details_rosenbrock}
To produce ground truth data $X = (X_1, X_2) \sim \pi$ from the Rosenbrock target, we use ancestral sampling, \ie, we perform
\begin{equation}
    \begin{cases}
        X_1 \sim \Nc (a, \frac{1}{2}),\\
        X_2 \sim \Nc(X_1^2, \frac{1}{2b}).
    \end{cases}
\end{equation}
The Rosenbrock \gls{pdf} is given as
\begin{equation}
    p(x) = \frac{1}{Z} \exp(- \Pot(x)),
\end{equation}
with partition function $Z = \frac{\pi}{\sqrt{b}}$.
Further, the gradient $\nabla \Pot$ and Hessian $\nabla^2 \Pot$ of the Rosenbrock potential are implemented numerically in PyTorch in a batch-based fashion using automatic differentiation.

\subsection{Details for Bayesian logistic regression}
\label{appendix:details_blr}
The posterior distribution is given as 
 \begin{equation*}
     p(\beta \mid \{x_i, y_i\}_{i=1}^N) \propto \prod_{i=1}^N \varphi(\beta^\top x_i)^{y_i}(1 - \varphi(\beta^\top x_i))^{1 - y_i}p(\beta).
 \end{equation*}
The potential is derived as
\begin{align*}
     \Psi(\beta \mid \{x_i, y_i\}_{i=1}^N) &= -\sum_{i=1}^N y_i \log \varphi(\beta^\top x_i) + (1 - y_i)\log (1 - \varphi(\beta^\top x_i)) - \log p(\beta) \\
     &= -\sum_{i=1}^N y_i (\beta^\top x_i - \log(1 + \exp(\beta^\top x_i))) + \log (1 - \varphi(\beta^\top x_i)) \\
     &\hspace{1.5cm}- y_i\log (1 - \varphi(\beta^\top x_i)) - \log p(\beta) \\
     \intertext{by noting that \(\log (1 - \varphi(u)) = - \log (1 + \exp(u))\,,  u \in \mathbb{R}\) we obtain}
     &= \sum_{i=1}^N \log(1 + \exp(\beta^\top x_i)) - y_i \beta^\top x_i - \log p(\beta)
     \intertext{plugging in the prior introduced in \Cref{sec:bayes_log_ref} yields}
     &= \sum_{i=1}^N \log(1 + \exp(\beta^\top x_i)) - y_i \beta^\top x_i + \frac{1}{2} \|\beta\|_{\Sigma^{-1}}^2.
 \end{align*}
The gradient of \(\Psi\) is
\begin{align*}
    \nabla \Psi(\beta \mid \{x_i, y_i\}_{i=1}^N) &= \sum_{i=1}^N \frac{x_i \exp(\beta^\top x_i)}{1 + \exp(\beta^\top x_i)} - y_ix_i + \Sigma^{-1} \beta \\
    &= \sum_{i=1}^N x_i(\varphi(\beta^\top x_i) - y_i) + \Sigma^{-1} \beta \\
    &= X^\top(\varphi(X\beta) - y) + \Sigma^{-1}\beta.
\end{align*}
The Hessian is found to be 
\begin{equation*}
    \nabla^2 \Psi(\beta \mid \{x_i, y_i\}_{i=1}^N) = X^\top \diag(s_1, \dots, s_N) X + \Sigma^{-1},
\end{equation*}

where \(s_i = \varphi(\beta^\top x_i)(1 - \varphi(\beta^\top x_i))\).
Recalling that \(\Sigma = \diag(\sigma_1^2, \dots, \sigma_d^2)\) and using the bound $0 < \varphi(u)(1 - \varphi(u)) \leq \frac14$ for all $u \in \mathbb{R}$, we get that $L \leq \frac{1}{4}\lambda_{\max}(\bar X^\top\bar X) + \max_i(1/ \sigma_i^2)$. Moreover, since \(X^\top \diag(s_1, \dots, s_N) X \succeq 0\) and \(\sigma_1^2, \dots \sigma_d^2 > 0\) it follows that \(\nabla^2 \Psi \succ 0\) (\emph{i.e.}, all Eigenvalues of the Hessian are strictly positive). Therefore, we omit Eigenvalue clamping (see \Cref{sec:numerical}) when calculating the curvature-aware preconditioner in the Bayesian logistic regression experiments.

\subsection{Additional results for the two-dimensional Rosenbrock potential}
In~\cref{fig:rosenbrock-stats,fig:rosenbrock-final-samples,fig:rosenbrock-acf} we show additional numerical results for the Rosenbrock potential.
\begin{figure}
    \centering
    \includegraphics[]{figures/rosenbrock/f1_diff.pdf}\\
    \includegraphics[width=1.0\linewidth]{figures/rosenbrock/f2_diff.pdf}
    \caption{The evolution of the estimation error for various statistics over $K = \num{1e4}$ Langevin iterations with discretization step size $h= \num{6e-3}$ on the Rosenbrock distribution.
    \emph{Top}: Estimation of the mean $f(x) =x$.
    \emph{Bottom}: Estimation error of cosine waves $|\hat \mu_{t_k}(f_{\gamma_1, \gamma_2}) - \pi(f_{\gamma_1, \gamma_2})|$ with $f_{\gamma_1, \gamma_2}(x) = \cos(\gamma_1 x_1 + \gamma_2 x_2)$ for $(\gamma_1, \gamma_2) \in \{0,1,2\}^2$.
    For both tasks, we observe that \emph{Curvature} and \emph{Interpolated} are able to faithfully estimate the statistic of interest. On the other hand, the non-local preconditioning schemes merely provide biased estimates or even diverge.}
    \label{fig:rosenbrock-stats}
\end{figure}

\begin{figure}
    \centering
    \includegraphics[width=1.0\linewidth]{figures/rosenbrock/final_samples.pdf}
    \caption{The final samples produced by the compared preconditioned dynamics.
    Evidently, both \emph{Curvature} and \emph{Interpolated} best match the ground truth samples depicted on the bottom right. All non-local preconditioning schemes struggle to capture the tails of the Rosenbrock distribution. The step size has been set to the optimal $h= \num{6e-3}$ and $K=\num{1e4}$ Langevin steps have been performed.}
    \label{fig:rosenbrock-final-samples}
\end{figure}

\begin{figure}
    \centering
    \includegraphics[width=1.0\linewidth]{figures/rosenbrock/acf.pdf}
    \caption{The observed \gls{acf} drop-off for increasing lag on the Rosenbrock distribution.
    The chains have been initialized in the target, and $K=\num{1e3}$ Langevin steps have been performed.
    Thereafter, an empirical covariance matrix was computed from the final samples, and all samples were projected onto the eigenvectors of this matrix.
    The Pearson correlation coefficient between the resulting statistics for different lags is depicted.
    It is observable that \emph{Covariance} preconditioning offers the quickest decrease in \gls{acf}.
    However, it should be noted that, at the same time, the \emph{Covariance} dynamics sample from a distribution with a strong bias toward the target.
    On the other hand, \emph{Curvature} and \emph{Interpolated} also offer faster \gls{acf} drop-off compared to \emph{Constant} and \emph{Fisher}, while maintaining a stationary distribution close to the actual target.}
    \label{fig:rosenbrock-acf}
\end{figure}

\subsection{Additional results for Bayesian logistic regression}
\label{appendix:add_results_blr}
In~\cref{fig:app_delta_init,fig:app_delta_init_2,fig:app_randn_inint,fig:sweep_randn} we show additional results of the experiment conducted in \Cref{sec:bayes_log_ref}.

\begin{figure}[!h]
    \centering
    \includegraphics[width=0.49\linewidth]{figures/bayes_log_reg/app_diff_init/delta_zero.pdf}
    \includegraphics[width=0.49\linewidth]{figures/bayes_log_reg/app_diff_init/delta_neg_1.pdf}
    \caption{Performance of the investigated dynamics on the Bayesian logistic regression posterior under two initializations: \emph{left}\,\(\delta_0\); \emph{right}, \(\delta_{-1}\) for $h=\num{5e-4}$.} 
    \label{fig:app_delta_init}
\end{figure}

\begin{figure}[!h]
    \centering
    \includegraphics[width=0.49\linewidth]{figures/bayes_log_reg/app_diff_init/delta_neg_2.pdf}
    \includegraphics[width=0.49\linewidth]{figures/bayes_log_reg/app_diff_init/delta_2.pdf}
    \caption{Performance of the investigated dynamics on the Bayesian logistic regression posterior under two initializations: \emph{left}\,\(\delta_{-2}\); \emph{right}, \(\delta_{2}\) for $h=\num{5e-4}$.} 
    \label{fig:app_delta_init_2}
\end{figure}

\begin{figure}[!h]
    \centering
    \includegraphics[width=0.49\linewidth]{figures/bayes_log_reg/app_diff_init/randn_1.pdf}
    \includegraphics[width=0.49\linewidth]{figures/bayes_log_reg/app_diff_init/randn_3.pdf}
    \caption{Performance of the investigated dynamics on the Bayesian logistic regression posterior under two initializations: \emph{left}\,\(\mathcal{N}(0,1\mathrm{I})\); \emph{right}, \(\mathcal{N}(0,3\mathrm{I})\) for $h=\num{5e-4}$.} 
    \label{fig:app_randn_inint}
\end{figure}

In \Cref{fig:app_delta_init} and \Cref{fig:app_randn_inint}, we observe that our method (\ie the interpolated preconditioner) exhibits at least as good of a convergence behavior as the global preconditioners (see \Cref{fig:app_delta_init}) for some initiliazations, while in other cases (see \Cref{fig:app_randn_inint} and \Cref{fig:app_delta_init_2}) our proposed method shows clear advantages and avoids slow converging paths. This advantage (see the right plot of \Cref{fig:app_randn_inint}) becomes especially pronounced, when the initialization is further away from the posterior mean (the maximal and minimal entries of \(\beta^*\) were estimated to be $1.2538$ and $-0.4581$).

\begin{figure}[!h]
    \centering
    \includegraphics[width=0.32\linewidth]{figures/bayes_log_reg/app_sweep/avg_w2_h_sweep_randn_1.0.pdf}
    \includegraphics[width=0.32\linewidth]{figures/bayes_log_reg/app_sweep/avg_w2_h_sweep_randn_2.0.pdf}
    \includegraphics[width=0.32\linewidth]{figures/bayes_log_reg/app_sweep/avg_w2_h_sweep_randn_3.0.pdf}
    \caption{Step size sweep for a fixed budged of \(\num{1e4}\) steps. Initialization \emph{left}\,\(\mathcal{N}(0,1\mathrm{I})\); \emph{middle}, \(\mathcal{N}(0,2\mathrm{I})\) \emph{right}, \(\mathcal{N}(0,3\mathrm{I})\).} 
    \label{fig:sweep_randn}
\end{figure}

In \Cref{fig:sweep_randn}, we report the average marginal Wasserstein-2 distance for a log-linear sweep of step sizes \(h \in [\num{5e-5},1] \) across three Gaussian initializations. Overall, the preconditioned dynamics admit fast convergence for smaller step-sizes than the dynamics with constant, scalar preconditioning. While in this case, the best-case performance for the methods is similar, we would argue that fast convergence also for smaller step-sizes is overall a desirable property as it allows to approximate the continuous dynamics more accurately. Moreover, we believe that the similar convergence results for all methods are also a consequence of the Gaussian prior rendering the distribution significantly better behaved than the Rosenbrock potential, despite the ill-conditioned choice of the covariance.
An additional benefit of curvature-aware and interpolated preconditioners is that they exhibit similar behavior across methods for larger step sizes and initializations.

\end{document}